\DeclareMathOperator{\R}{\mathbb{R}}
\DeclareMathOperator{\N}{\mathbb{N}}
\theoremstyle{plain}
\newtheorem{thm}{Theorem}[section] 
\newtheorem{biol}[thm]{Biological interpretation}
\newtheorem{nota}[thm]{Notation}
\newtheorem{prop}[thm]{Proposition}
\newtheorem{theo}[thm]{Theorem}
\newtheorem{defi}[thm]{Definition}
\newtheorem{rema}[thm]{Remark}
\newtheorem{lemm}[thm]{Lemma}
\title{A birth-death model of ageing: from individual-based dynamics to evolutive differential inclusions}
\author[1]{Sylvie Méléard}
\author[2]{Michael Rera}
\author[3]{Tristan Roget}
\affil[1]{\'Ecole Polytechnique}
\affil[2]{Sorbonne Université}
\affil[3]{Université de Montpellier}
\begin{document}
\maketitle
\begin{abstract}
Ageing’s sensitivity to natural selection has long been discussed because of its apparent negative effect on an individual's fitness. Thanks to the recently described (Smurf) 2-phase model of ageing (\cite{tricoire2015new}) we propose a fresh angle for modeling the evolution of ageing. Indeed, by coupling a dramatic loss of fertility with a high-risk of impending death - amongst other multiple so-called hallmarks of ageing - the Smurf phenotype allowed us to consider ageing as a couple of sharp transitions.
The birth-death model (later called bd-model) we describe here is a simple life-history trait model where each asexual and haploid individual is described by its fertility period $x_b$ and survival period $x_d$. We show that, thanks to the Lansing effect, the effect through which the “progeny of old parents do not live as long as those of young parents”, $x_b$  and $x_d$ converge during evolution to configurations $x_b-x_d\approx 0$ in finite time. 
\\To do so, we built an individual-based stochastic model which describes the age and trait distribution dynamics of such a finite population. Then we rigorously derive the adaptive dynamics models, which describe the trait dynamics at the evolutionary time-scale. We extend the Trait Substitution Sequence with age structure to take into account the Lansing effect. Finally, we study the limiting behaviour of this jump process when mutations are small. We show that the limiting behaviour is described by a differential inclusion whose solutions $x(t)=(x_b(t),x_d(t))$ reach the diagonal $\lbrace x_b=x_d\rbrace$ in finite time and then remain on it. This differential inclusion is a natural way to extend the canonical equation of adaptive dynamics in order to take into account the lack of regularity of the invasion fitness function on the diagonal $\lbrace x_b=x_d\rbrace$. 
\end{abstract}

\section{Introduction}
Ageing is commonly defined as an age-dependant increase of the probability to die after the maturation phase (\cite{kirkwood2000we}). It affects a broad range of organisms in various ways ranging from negligible senescence to fast post-reproductive death (reviewed in \cite{jones2014diversity}). In the recent years, a new 2-phases model of ageing proposed by \cite{tricoire2015new} described the ageing process not as being continuous but as made of at least 2 consecutive phases separated by a dramatic transition. This transition, dubbed “Smurf transition”, was first described in drosophila (\cite{rera2011modulation}, \cite{rera2012intestinal}). In short, this transition occurs in every individuals prior to death and is marked by a series of associated phenotypes encompassing high-risk of impending death, increased intestinal permeability, loss of energy stores, reduced fertility (\cite{rera2012intestinal}). It was later showed to be evolutionarily conserved in \textit{Caenorhabditis elegans} and \textit{Danio rerio} (\cite{dambroise2016two}, \cite{rera2018smurf}). Such broad evolutionary conservation of a marker for physiological age raises the question of an active selection of the underlying mechanisms throughout evolution.
Since the beginning of ageing studies, the question of its ability to appear through evolution has been raised. In fact, since the Darwinian theory of evolution stipulates that species arise and develop thanks to the natural selection of small, inherited variations that increase an individual's ability to compete, survive, and reproduce (\cite{bhl124544}), numerous researchers suggested that ageing - and more precisely senescence - could not be actively and directly selected thanks to evolution (\cite{fabian2011evolution}). One of the first to publicly address the question of the evolution of ageing was August Weismann who proposed in 1881 that the life expectancy was programmed by “the needs of the species” (\cite{weismann1881origin}). Numerous theoretical works have been developed about ageing for the past 60 years in order to recenter the selection of an ageing process on the individuals more than on the population. 
Here we will focus our attention on the capability of a process such as ageing to be selected through evolution.
\\If fitness alone - as an individual's reproductive success or its average contribution to the gene pool of the next generation - were at play in the evolution process, the best adapted individuals would have infinite fertility as well as longevity. Nevertheless, this situation is never observed mainly because organisms adapted to constant variations of environmental conditions and physical limitations of resources availability. Thus, an active mechanism for the elimination of these fitness-excessive individuals would represent a selective advantage in an environment where scarcity is the rule. The Lansing effect is a good candidate for such a mechanism. It is the effect through which the “progeny of old parents do not live as long as those of young parents” first described in rotifers (\cite{lansing1947transmissible}, \cite{lansing1954nongenic}). More recently, it has been shown that older drosophila females and in some extent males tend to produce shorter lived offspring (\cite{priest2002role}), zebra finch males give birth to offspring with shorter telomere lengths and reduced lifespans (\cite{noguera2018experimental}) and finally in humans, “Older father’s children have lower evolutionary fitness across four centuries and in four populations” (\cite{arslan2017older}).
\\In the present article, we decided to approach the problem of ageing selection and evolution by using an extremely simplified version of a living organism. It is an haploid and asexual organism carrying only two traits, $x_b$ that defines the duration of its ability to reproduce and $x_d$ that defines the duration of its ability to maintain its integrity - stay alive (see Figure \ref{fig:histoiredevie}). We will further discuss the properties of this simple model in the next part. Although quite simple, it allows the modeling of all types of observed ageing modes : negligible senescence, sudden post-reproductive death, or post-reproductive “menopause-like” survival as well as the smurf phase.
\\The main result of the present article is that a pro-senescence program can be selected through Darwinian mechanisms thanks to the Lansing effect. Indeed, our main mathematical result (see Theorem \ref{theo:diffinc}) shows that evolution drives the trait $(x_b,x_d)$ towards configurations $x_b = x_d$. It means that the individuals can enjoy all their reproductive capacity, and then are quickly removed from the population. 
Moreover, this result shows that after reaching the configurations $x_b=x_d$, the traits $x_b$ and $x_d$ continue to increase with decreasing speed,  while maintaining $x_b=x_d$. This decrease in the speed of evolution is a consequence of the fitness gradients being decreasing functions of the traits (see Remark \ref{remacaswell2}). It is related to the well-known fact that the strength of selection decreases with age, i.e that  a mutation having an effect on the reproduction or mortality rates at a given age will have all the more impact as this age is small (\cite{haldane1942new}, \cite{medawar1952unsolved}, \cite{hamilton1966moulding}). Indeed, in our model a perturbation of the trait $x_b$ (resp. $x_d$) is equivalent to a perturbation of the birth rates at age $x_b$ (resp. death rates at age $x_d$). 
\\We build an individual based stochastic model inspired by \cite{tran2008large}. It describes an asexual and haploid population with a continuous age and a continuous life-history trait structure. In this model, the life-history trait of every individual is thus a pair of positive numbers $(x_b, x_d)\in\R_+^2$. An individual with trait $(x_b,x_d)$ reproduces at rate one as long as it is younger than $x_b$ and cannot die as long as it is younger than $x_d$ (see Figure \ref{fig:histoiredevie}).
\begin{figure}[!h]
\begin{center}
\includegraphics[scale=0.2]{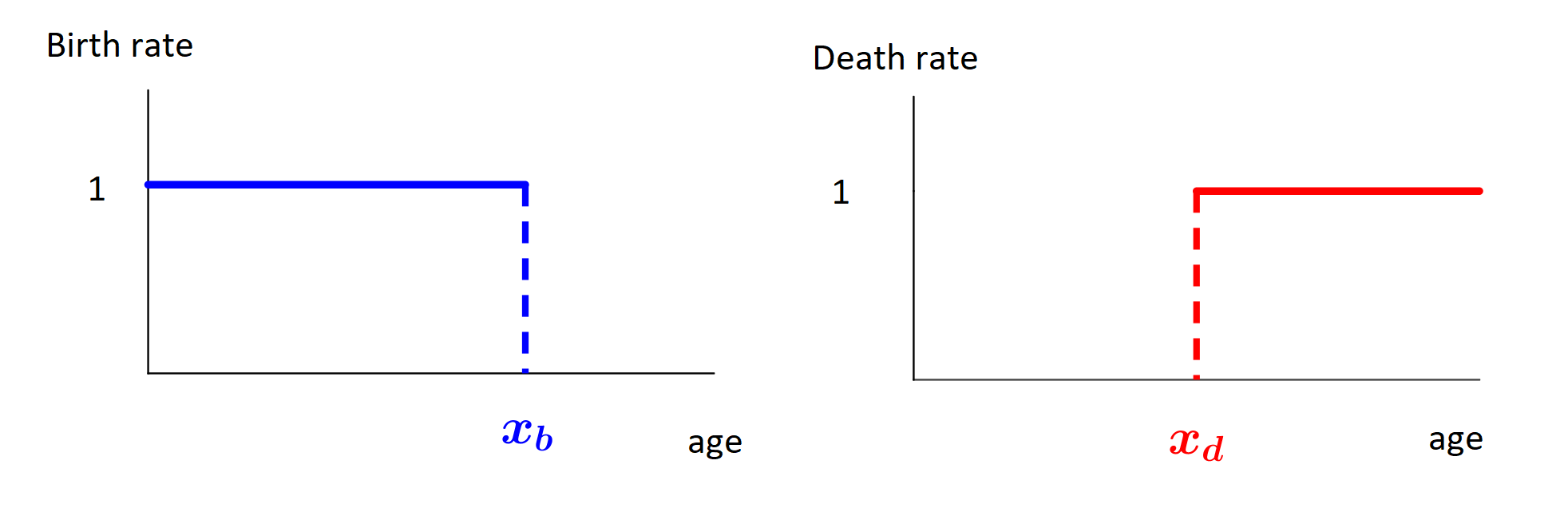}
\caption{Life-history associated to the trait $x=(x_b,x_d)$.}
\label{fig:histoiredevie}
\end{center}
\end{figure}
This model leads to three configurations: $x_d<x_b$, $x_d=x_b$ and $x_d>x_b$ (see Figure \ref{fig:fig:classes}).
\begin{figure}[h!]
\begin{center}    
\includegraphics[scale=0.13]{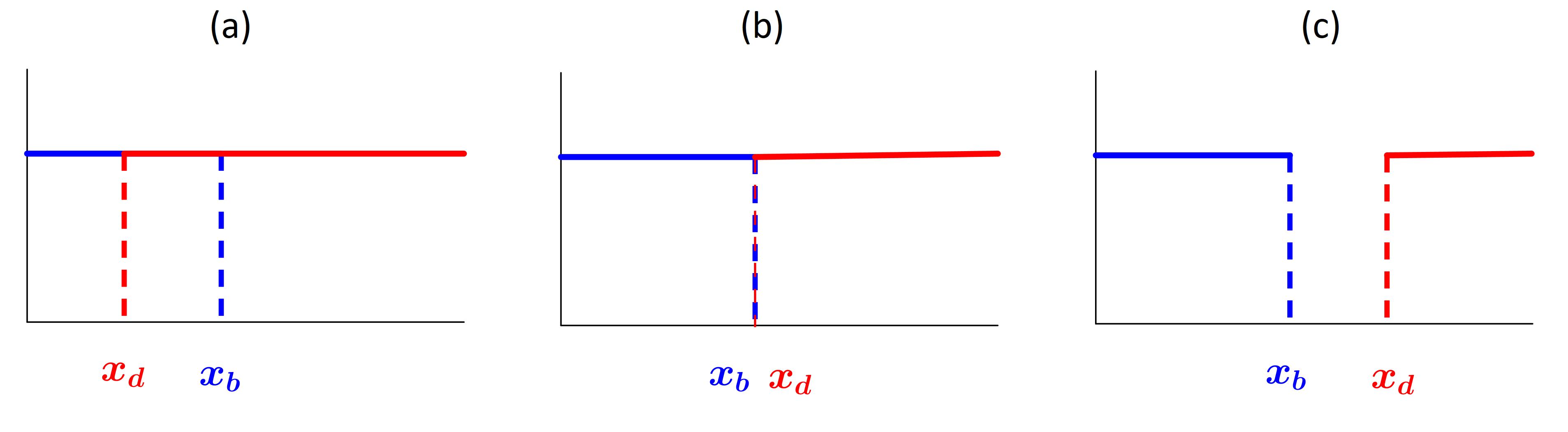}
        \caption{Three typical configurations of the model.  $(a)$ 'Too young to die' : it corresponds to configurations $(x_b,x_d)$ which satisfies $x_d<x_b$ ; $(b)$ 'Now useless': it corresponds to configurations $(x_b,x_d)$ which satisfy $x_b=x_d$; $(c)$ 'Menopause-like': it corresponds to configurations $(x_b,x_d)$ which satisfies $x_d>x_b$.}
\label{fig:fig:classes}    
    \end{center}
\end{figure}
From one generation to the next, variation on the trait $(x_b,x_d)$ is generated through genetic mutations. In addition, natural selection occurs through mortality due to competition for resources thanks to a logistic equation defining the maximum load of the medium. Finally, we model an epigenetic effect of senescence through the Lansing Effect. It introduces a source of phenotypic variation at a much faster time-scale than genetic mutations. In that aim, we assume that an individual that reproduces after age $x_d$ transmits to his descendant a shorter life expectancy (see Section 2 for details). Therefore, only individuals with trait $x_d < x_b$ are affected (see Figure \ref{fig:fig:classes} (c)). That creates an adaptive trade-off which impacts the phenotypic evolution of the population. 
\\The purpose of the present article is to study the long-term evolution of the trait $x=(x_b,x_d)$ and to determine whether it concentrates on $x_b -x_d = 0$. To do so, we are inspired by the theory of adaptive dynamics (\cite{metz1992should}, \cite{metz1996adaptive}, \cite{dieckmann1996dynamical}) which studies the phenotypic eco-evolution of large populations under the assumption that genetic mutations are rare and have small effects. A central tool in that theory is the concept of \textit{invasion fitness}. The invasion fitness is a function $1-z(y,x)$ informally defined as the probability that an individual with trait $y$ survives in a resident population with trait $x$ at demographic  equilibrium. In section 4, we prove that the invasion fitness satisfies the simple relation $1-z(y,x)=(\lambda(y) -\lambda(x))\vee 0$ where $\lambda(x)$ is the \textit{Malthusian parameter}, describing the adaptive value associated with the trait $x$ (see Section 3.1 (\ref{eq:malthus}) for the definition). This allows us to introduce the \textit{Trait Substitution Sequence} process (TSS) which is a pure jump process describing the successive invasions of successfull mutants in monomorphic populations at the demographic equilibrium. The TSS has been heuristically introduced in \cite{metz1996adaptive}, \cite{dieckmann1996dynamical} for a trait structured population. In \cite{champagnat2006microscopic}, it has been rigorously derived from an individual based model and generalised in \cite{meleard2009trait}, to age-structured populations. Our case differs from \cite{meleard2009trait} by mainly two aspects: the additional Lansing effect and the specific form of the mutations kernel which is not absolutely continuous with respect to the Lebesgue measure on $\R^2$ (as assumed in \cite{champagnat2006microscopic},  \cite{meleard2009trait}). 
In the usual case, the TSS is approximated by the solution of the \textit{Canonical equation of adaptive dynamics} when the size of mutations is small and on a longer time-scale (\cite{champagnat2002canonical}, \cite{champagnat2011polymorphic}, \cite{meleard2009trait}). This limit theorem requires the Lipschitz regularity of the fitness gradient. In our model this assumption is not satisfied. Nonetheless, we prove that the limiting behaviour of the TSS when mutation are small is captured by a differential inclusion, using an approach  developed in \cite{gast2012markov}. A differential inclusion is an extension of ordinary differential equation to set-valued time-derivatives, which extends Cauchy-Lipschitz theory to non regular gradient cases. In our model, the gradient is smooth except on the diagonal $\lbrace x_b=x_d\rbrace$. We prove that the solutions are well-defined until they reach the diagonal which they do in finite time. Indeed, the drift towards the diagonal is due to on one side $(x_b<x_d)$ to the fact that the individuals with larger $x_b$ will reproduce more and thus tend to invade ; and on the other side $(x_d<x_b)$ to the Lansing effect because old individuals produce short-lived offsprings. Hence, there is no advantage associated with an increasing $x_b$, only when associated with an increasing $x_d$ that maximizes survival. Then the trait $(x_b,x_d)$ evolves following the diagonal. The drift of the differential inclusion depends on the derivatives of the Malthusian parameter with respect to the trait variable. These derivatives are expressed as functions of the stable age distribution and reproductive value as in \cite{hamilton1966moulding}, \cite{caswell2010reproductive} (see Remark \ref{remacaswell2}).
\vspace{0.2cm}
\\In Section 2, we present the individual-based  model. Thanks to simulations, we show what was suggested by observations: the trait distribution of the population stabilises on the diagonal $x_b-x_d=0$.
\\ In Section 3, we study the deterministic approximations of the stochastic dynamics under the assumption of large population and rare mutations. These approximations are non-linear systems of partial differential equations similar to the Gurtin-McCamy Equation (\cite{gurtin1974non}). We study their long-time behaviour and give some results of convergence to the stationary states.
\\In Section 4,  we state and prove the main mathematical results of this article concerning the approximation by the TSS and the canonical inclusion of adaptive dynamics. (Theorem \ref{th:approxtss} and Theorem \ref{theo:diffinc}).
\\ In Section 5, we give some comments on our results.
\section{A stochastic model for the evolution of life-history traits}
At time $t\geq 0$, the population is described by a point measure on $(\R_+^{*})^{2}\times \R_+$
\begin{equation}\label{eq:measurepop}
Z_t^K(dx,da)=\frac{1}{K}\sum_{i=1}^{N_t^K}\delta_{(x^{i}(t),a^{i}(t))}(dx,da)
\end{equation}
where $N_t^K = K\langle Z_t^K,1\rangle$ is the total population size, $K$ is the order of magnitude of a population at equilibrium, $x^{i}(t)=(x_b^i(t),x_d^i(t))\in (\R_+^{*})^2$ is the trait of the individual $i$ and $a^i(t)\in\R_+$ is its age. The dynamics is defined as a piecewise deterministic Markov process which jumps as follows:
\begin{itemize}
\item[•]An individual $(x,a)\in (\mathbb{R}_+^{*})^2\times\R_+$ reproduces at rate $\mathds{1}_{a\leq x_b}$. The trait of the descendant $y=(y_b,y_d)$ is determined by the following two-steps mechanism (see Figure \ref{fig:noyau} below for an illustration):
\begin{itemize}
\item[-]Step 1: If $a\leq x_d$, the offspring inherits the trait $x=(x_{b},x_{d})$.\\
\emph{Lansing Effect}: if $a>x_d$, we assume that the offspring carries the trait  $(x_b,0)$.
Let us denote by $\tilde{x}$  the trait  defined as $\tilde{x}=x$ if $a\leq x_d$ and  $\tilde{x}=(x_b,0)$ if $a>x_d$. We observe that $x_{b}$ remains unchanged and that only individuals with configurations $x_d<x_b$ are concerned by the second type $(x_b,0)$ (see Figure \ref{fig:fig:classes} (c)).
\item[-]Step 2: Genetic mutations. A mutation appears instantaneously on each trait ${x}_b$ and $\tilde{x}_d$ independently with probability $p_K\in \left]0,1\right[$. If the trait ${x}_b$ mutates, the trait $y_b$ of the descendant is $y_b=x_b +h_b$ where $h_b\in\R$ is chosen according to the probability measure $k(x_b,h_b)dh_b$; if the trait $\tilde{x}_d$ mutates, the trait $y_d$ of the newborn is $y_d=\tilde{x}_d+h_d$ where $h_d\in\R$ is chosen according to $k(\tilde{x_d},h_d)dh_d$, where the mutational kernel $k$ is defined for all $u\in\mathbb{R}_+$ and $v\in \R$ by
\begin{equation}
k(u,v)=\frac{\mathds{1}_{\left[0\vee(u-1),u+1\right]}(u + v)e^{-\frac{v^2}{\sigma^2}}}{\int_{\R}\mathds{1}_{\left[0\vee(u-1),u+1\right]}(u + z)e^{-\frac{z^2}{\sigma^2}}dz}
\end{equation}
where $\sigma^2>0$. Note that for $u>1$, 
\begin{equation}\label{eq:k}
k(u,v)=k(v)= \frac{\mathds{1}_{\left[-1,1\right]}( v)e^{-\frac{v^2}{\sigma^2}}}{\int_{\R}\mathds{1}_{\left[-1,1\right]}(z)e^{-\frac{z^2}{\sigma^2}}dz}.
\end{equation}
\end{itemize}
\item[•]An individual with trait $(x,a)$ has a death rate $\mathds{1}_{a>x_d}+\eta N_t^K$, with $\eta>0$, meaning that  each individual is subjected to the same competition pressure $\eta$ from any individual in the population and whatever the value of its trait.
\item[•]Between jumps, individuals age at speed one: an individual with age $a$ at time $t$ has an age $a+s$ at time $t+s$.
\end{itemize}
Figure \ref{fig:noyau} summarizes the trait dynamics described above. 
\begin{figure}[!h]
\begin{center}
\includegraphics[scale=0.13]{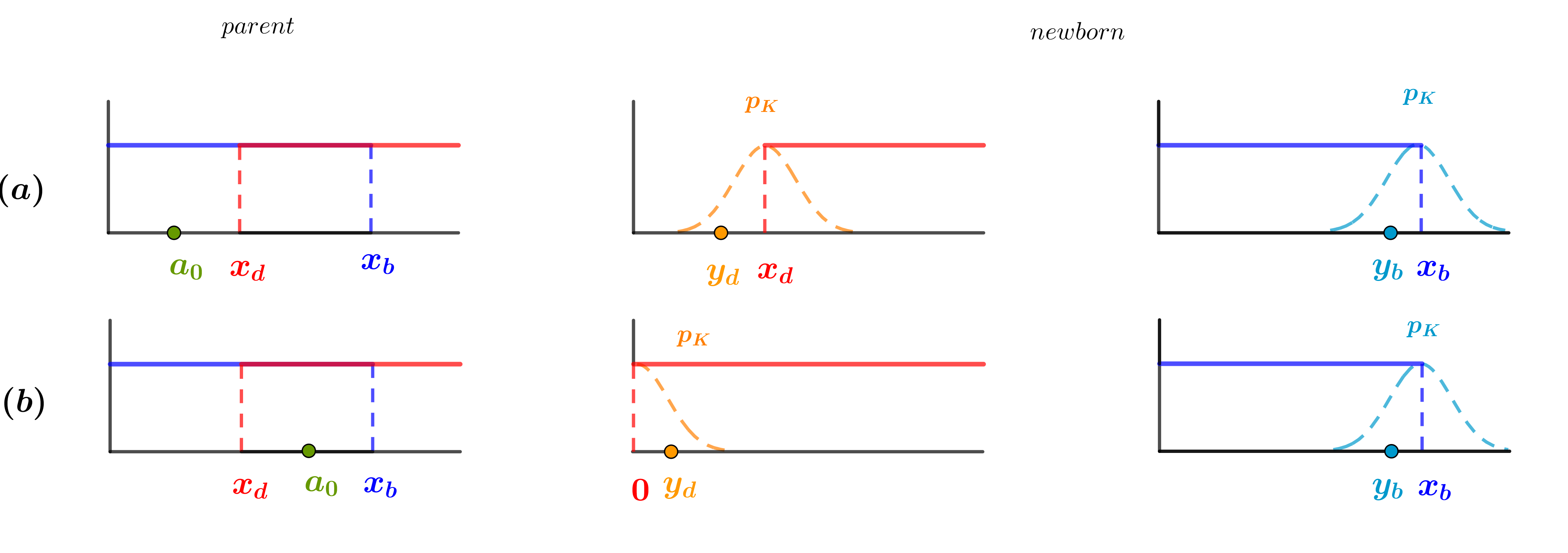}
\caption{Picture of the reproduction and mutation mechanism. (a): the individual reproduces at age $a_0<x_d$, there is no Lansing Effect. (b): the individual reproduces at age $a_0>x_d$, the Lansing Effect acts. }
\label{fig:noyau}
\end{center}
\end{figure}
\paragraph{Numerical simulation.} 
 The pictures in Figure \ref{fig:simumicro}  represent a simulation of the trait marginals dynamics  of the process $Z_t^K(dx,da)$. We consider a monomorphic initial population with trait $x=(1.2, 2.5)$ and $N_0^K=10000$. We consider a competition rate $\eta=0.0005$, a probability of mutation $p_K=0.05$ and a variance of mutations $\sigma=0.05$.
\begin{figure}[h!]
\begin{center}
\includegraphics[scale=0.3]{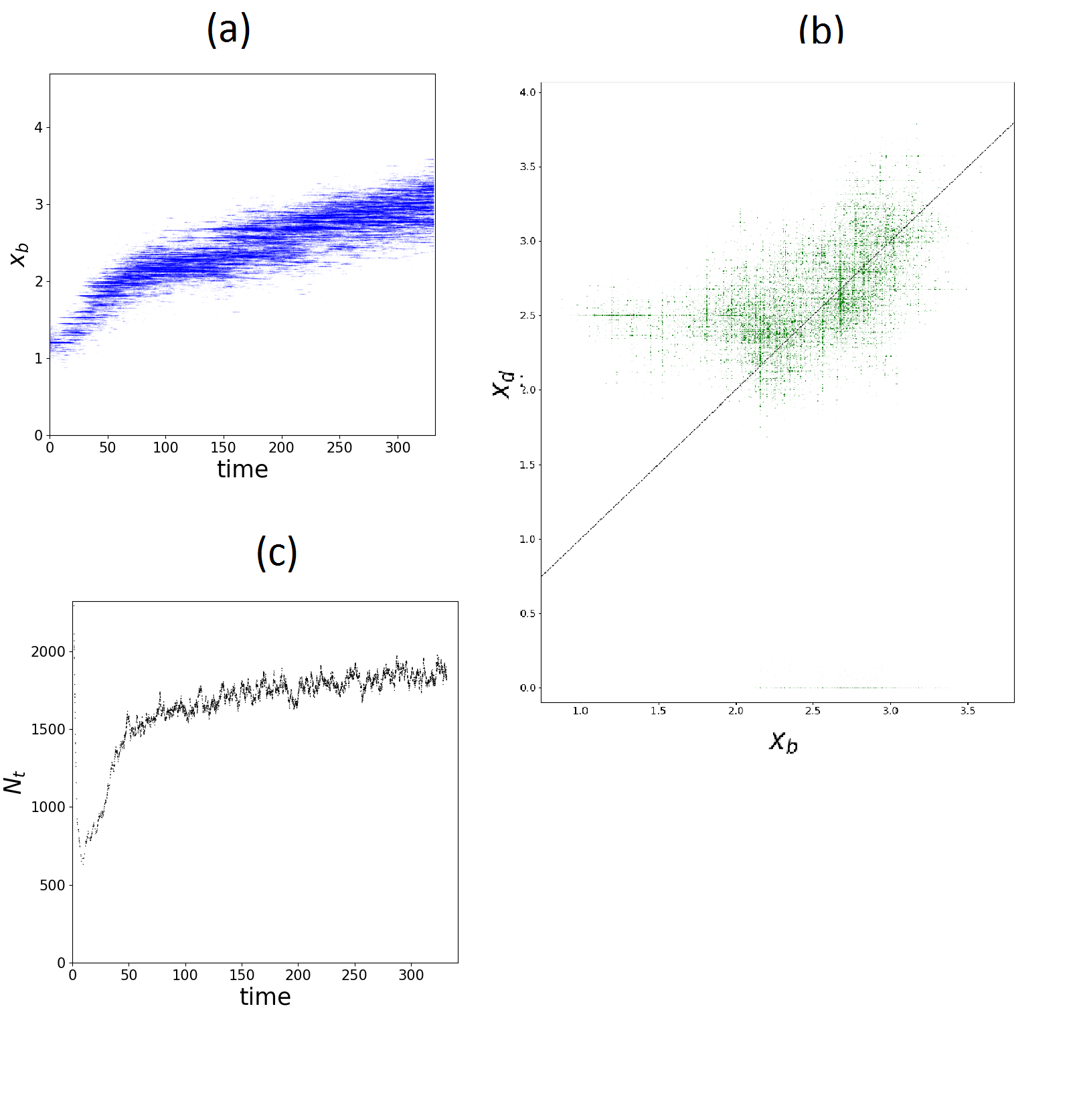}
\end{center}
\caption{Simulation of the individual based model (see the script in Appendix A.4) . (a): Dynamics of the trait $x_b$ as a function of time. (b): Dynamics of the trait $x_d$ as a function of the trait $x_b$. (c): Population size as a function of time. Parameters: $N_0^K=10000$ individuals with trait $(1.2,2.5)$, $\eta=0.0005$, $p=0.05$, $\sigma=0.1$.}
\label{fig:simumicro}
\end{figure}
\newline
At time $t=0$, the population is monomorphic with trait $(x_b,x_d)=(1.2,2.5)$. We observe that before the trait $x_b$ reaches the value of $2.5$, the trait $x_d$ remains constant. When $x_b$ reaches $x_d\simeq 2.5$, the traits $x_b$ and $x_d$ continue to increase by maintaining $x_b\simeq x_d$.
\section{Monomorphic and bimorphic deterministic dynamics}
In this section, we study a deterministic approximation of the process $Z^K$ when $K$ goes to infinity. We also assume that $p_K$ goes down to zero: almost no mutation occurs on a time interval $\left[0,T\right]$. Nonetheless, some phenotypic variation is created by Lansing Effect. Since our model is density-dependent, the deterministic approximation is a system of classical non-linear partial differential equations similar to the Gurtin MacCamy Equation  (\cite{gurtin1974non}). In the monomorphic case, we show that the dynamics converges to the unique non-trivial equilibrium. In the bimorphic case, we show the convergence to a monomorphic equilibrium. We will consider that a monomorphic population with trait $x$ is composed of two subpopulations with traits $(x_b,x_d)$ and $(x_b,0)$.
\begin{nota}
We define $\textbf{I}=\left(\begin{array}{cc}
1 & 0 \\ 
0 & 1
\end{array}\right)$.
\end{nota}
\subsection{Monomorphic dynamics}
Let $x=(x_b,x_d)\in (\R_+^{*})^2$ be a phenotypic trait. We consider a monomorphic initial population such that the sequence $(Z_0^K)_{K}$ weakly  converges as $K\rightarrow \infty$ to $\delta_x n_x(a)da$ which describes a monomorphic population with trait $x$. Then, as in \cite{tran2008large}, we can prove that the sequence of processes $(Z^K)_{K}$ converges in probability on any finite time interval to the weak solution $(n_x(t,.),t\geq 0)=((n_x^1(t,.),n_x^2(t,.)),t\geq 0)\in C(\R_+, L^1(\R_+)^2)$ of the following system of partial differential equations
\begin{equation}\label{eq:edpmono}
\begin{cases}
\partial_t n_x(t,a) +\partial_a n_x(t,a) = -\left( \textbf{D}_x(a) +\eta \Vert n_x(t,.)\Vert_1 \textbf{I}\right)n_x(t,a)\\
n_x(t,0)=\int_{\R_+}\textbf{B}_x(\alpha)n_x(t,\alpha)d\alpha
\end{cases}
\end{equation}
with initial condition given by $n_x(0,a)=n_x(a)$, where the densities $n^1_x(t,.)$ and $n^2_x(t,.)$ describe the population distributions with trait $(x_b,x_d)$ and $(x_b,0)$ respectively; 
\begin{equation*}
\Vert n_x(t,.)\Vert_1 = \sum_{i\in\lbrace 1,2\rbrace}\int_{\R_+}\vert n_x^{i}(t,\alpha)\vert d\alpha
\end{equation*}
is the total population size and
\begin{equation}\label{eq:birthdeathrate}
\textbf{B}_x(a)=\left(\begin{array}{cc}
\mathds{1}_{a\leq x_b\wedge x_d} & 0 \\ 
\mathds{1}_{x_d<a\leq x_b} & \mathds{1}_{a\leq x_b}
\end{array}\right),\quad 
\textbf{D}_x(a)=\left(\begin{array}{cc}
\mathds{1}_{a>x_d} & 0 \\ 
0 & 1
\end{array}\right)
\end{equation}
are the birth and death interactions. We refer to \cite{webb1985theory} for the well-posedness theory of $L^1(\R_+)^2$ solutions of Equation (\ref{eq:edpmono}). 
\begin{rema}
Let us comment the different terms in Equation (\ref{eq:edpmono}). The transport term on the left-hand side of the first equation describes the aging of the individuals, the right-hand side describes the death of the individuals. The renewal condition in $a=0$ describes the births (i.e the individuals with age $0$).
\end{rema}

\medskip \noindent We introduce the set of viable traits
\begin{equation}\label{eq:viableset}
\mathcal{V}=\lbrace x=(x_b,x_d)\in(\R_+^{*})^2: x_b\wedge x_d > 1\rbrace.
\end{equation} 
We show that for any trait $x\in\mathcal{V}$, there exists a unique non-trivial and globally stable stationary state of (\ref{eq:edpmono}). Indeed, the quantity $x_b\wedge x_d$ represents the mean number of descendants with trait $(x_b,x_d)$ of an individual with trait $(x_b,x_d)$ if there is no competition.
\begin{prop}\label{prop:monoedp}
Assume that $x\in\mathcal{V}$. There exists a unique non-trivial stationary solution $\overline{n}_x\in L^1(\R_+)^2$ to Equation (\ref{eq:edpmono}). Moreover, any $L^1(\R_+)^2$ non negative solution $n_x(t,.)$ of (\ref{eq:edpmono}) such that $n^1_x(t,.)\neq 0$ converges to $\overline{n}_x$ in $L^1(\R_+)^2$ as $t\rightarrow \infty$. 
\end{prop}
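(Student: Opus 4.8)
The plan is to first reduce the existence and uniqueness of $\overline n_x$ to a scalar fixed-point problem in the total mass, then to establish a priori bounds on the solution, and finally to obtain the global convergence through a relative-entropy / LaSalle argument exploiting the triangular structure of the system.

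\textbf{Existence and uniqueness of the stationary state.} Along characteristics in the age variable, any stationary solution $\overline n_x=(\overline n^1_x,\overline n^2_x)$ of (\ref{eq:edpmono}) solves the linear ODE $\partial_a\overline n_x(a)=-(\textbf{D}_x(a)+\eta\overline N\,\textbf{I})\,\overline n_x(a)$ with $\overline N:=\Vert\overline n_x\Vert_1$, hence $\overline n^1_x(a)=\overline n^1_x(0)\,\ell^1_{\overline N}(a)$ and $\overline n^2_x(a)=\overline n^2_x(0)\,e^{-(1+\eta\overline N)a}$, where $\ell^1_m(a):=\exp\big(-\eta m a-(a-x_d)_+\big)$. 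First, a non-trivial stationary state must have $\overline n^1_x(0)>0$: if $\overline n^1_x\equiv 0$, the renewal relation for $\overline n^2_x$ reduces to $\overline n^2_x(0)=\overline n^2_x(0)\int_0^{x_b}e^{-(1+\eta\overline N)\alpha}d\alpha$ with a factor in $(0,1)$, forcing $\overline n^2_x\equiv 0$. Dividing the first renewal line by $\overline n^1_x(0)$ then yields the scalar equation $F(\overline N)=1$ with $F(m):=\int_0^{x_b\wedge x_d}\ell^1_m(\alpha)\,d\alpha=\int_0^{x_b\wedge x_d}e^{-\eta m\alpha}\,d\alpha$ (the correction $(\alpha-x_d)_+$ vanishes on $[0,x_b\wedge x_d]$). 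Since $F$ is continuous, strictly decreasing on $\R_+$, tends to $0$ at infinity and satisfies $F(0)=x_b\wedge x_d$, the equation $F(\overline N)=1$ has a unique positive root $\overline N^\star$ \emph{if and only if} $x_b\wedge x_d>1$, i.e. iff $x\in\mathcal{V}$ — this is exactly where the viability assumption is used. The second renewal line then reads $\overline n^2_x(0)\big(1-\int_0^{x_b}e^{-(1+\eta\overline N^\star)\alpha}d\alpha\big)=\overline n^1_x(0)\int_{x_d}^{x_b}\ell^1_{\overline N^\star}(\alpha)\,d\alpha$; since $1+\eta\overline N^\star\geq 1$ the bracket lies in $(0,1)$, so $\overline n^2_x(0)$ is a well-defined non-negative multiple of $\overline n^1_x(0)$ (vanishing exactly when $x_d\geq x_b$, i.e. outside the Lansing regime). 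Finally the relation $\overline N^\star=\overline n^1_x(0)\Vert\ell^1_{\overline N^\star}\Vert_1+\overline n^2_x(0)\Vert e^{-(1+\eta\overline N^\star)\cdot}\Vert_1$ is affine and strictly increasing in $\overline n^1_x(0)$, which pins down $\overline n^1_x(0)>0$, hence $\overline n_x$, uniquely.

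\textbf{A priori bounds.} Integrating (\ref{eq:edpmono}) in $a$, the total birth flux $n^1_x(t,0)+n^2_x(t,0)$ equals $\int_0^{x_b}(n^1_x+n^2_x)(t,\alpha)\,d\alpha\leq\Vert n_x(t,\cdot)\Vert_1$, while the competition term removes a mass at least $\eta\Vert n_x(t,\cdot)\Vert_1^2$; hence $\frac{d}{dt}\Vert n_x(t,\cdot)\Vert_1\leq\Vert n_x(t,\cdot)\Vert_1-\eta\Vert n_x(t,\cdot)\Vert_1^2$, so $\limsup_t\Vert n_x(t,\cdot)\Vert_1\leq 1/\eta$ and, together with the well-posedness theory of \cite{webb1985theory}, the solution is global and bounded. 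If moreover $n^1_x(0,\cdot)\not\equiv 0$, then $n^1_x(t,\cdot)\not\equiv 0$ for all $t$, the first equation having no external source; and $\liminf_t\Vert n^1_x(t,\cdot)\Vert_1>0$, for otherwise the total mass would tend to $0$, making the competition rate $\eta\Vert n_x(t,\cdot)\Vert_1$ eventually smaller than any $\delta>0$, so that by comparison $n^1_x$ would dominate from below the solution of the linear renewal equation with death rate $\mathds{1}_{a>x_d}+\delta$ and birth rate $\mathds{1}_{a\leq x_b\wedge x_d}$, whose net reproduction number $\int_0^{x_b\wedge x_d}e^{-\delta\alpha}\,d\alpha$ exceeds $1$ for $\delta$ small (again because $x\in\mathcal{V}$), forcing exponential growth — a contradiction. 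Thus $\Vert n_x(t,\cdot)\Vert_1$ eventually stays in a compact subset of $(0,\infty)$.

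\textbf{Global convergence.} With these bounds in hand, the plan is to run the generalized relative entropy method for nonlinear renewal equations (in the spirit of Michel--Mischler--Perthame and of Perthame's analysis of the Gurtin--MacCamy equation): using the equilibrium survival profile $\ell^1_{\overline N^\star}$ and its adjoint (reproductive-value) weight as multipliers, one builds a non-negative Lyapunov functional $H(t)$ for the pair $(n^1_x,n^2_x)$ whose dissipation controls the $L^1$-distance of the age profiles to the equilibrium shapes together with $\big|\,\Vert n_x(t,\cdot)\Vert_1-\overline N^\star\,\big|$. The scalar nonlinearity $\eta\Vert n_x(t,\cdot)\Vert_1$ is benign here, since it multiplies $\textbf{I}$ and the feedback is monotone, and the triangular structure helps: $n^2_x$ sees $n^1_x$ only through the source term, and its renewal operator is a strict contraction with factor $\int_0^{x_b}e^{-(1+\eta\overline N^\star)\alpha}d\alpha<1$, so one concludes first for $n^1_x$ and then slaves $n^2_x$. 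Combined with the relative compactness of $\{n_x(t,\cdot):t\geq 1\}$ in $L^1(\R_+)^2$ coming from the a priori bounds and the renewal structure, LaSalle's invariance principle then forces the $\omega$-limit set — each element of which is the trace of a global bounded entire solution along which $H$ is constant — to reduce to $\{\overline n_x\}$, which gives the stated convergence. The main obstacle is precisely this last step: designing the relative-entropy functional so that it simultaneously absorbs the nonlinear competition coupling and the two-component structure with the discontinuous, non-smooth rates $\textbf{B}_x,\textbf{D}_x$ (which rules out spectral-gap arguments but remains compatible with the $L^1$ renewal theory of \cite{webb1985theory}), and then verifying the compactness and invariance inputs needed for LaSalle; an alternative route would be to first prove $\Vert n_x(t,\cdot)\Vert_1\to\overline N^\star$ and treat the Volterra equation for $n^1_x(\cdot,0)$ as asymptotically autonomous, but establishing that convergence requires essentially the same monotonicity/entropy input.
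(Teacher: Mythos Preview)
Your existence and uniqueness argument is fine and is essentially the paper's argument rephrased: the paper identifies $\overline n_x$ as the principal eigenvector of the linear operator $A$ normalized so that $\eta\Vert\overline n_x\Vert_1=\lambda(x)$, and your scalar fixed-point equation $F(\overline N)=1$ is precisely the defining equation (\ref{eq:malthus}) for $\lambda(x)$ with $\lambda(x)=\eta\overline N^\star$.

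For the convergence, however, there is a genuine gap: your relative-entropy/LaSalle program is only a plan, and you yourself flag the construction of the Lyapunov functional as the ``main obstacle''. You are missing a much simpler and completely elementary route, which the paper takes. The key observation is that the nonlinearity $\eta\Vert n_x(t,\cdot)\Vert_1\,\textbf{I}$ is a \emph{scalar} multiple of the identity, so it can be removed by an integrating factor: set
\[
v_x(t,a)=\exp\Big(\eta\int_0^t\Vert n_x(s,\cdot)\Vert_1\,ds\Big)\,n_x(t,a).
\]
Then $v_x$ solves the \emph{linear} McKendrick--Von Foerster system (\ref{eq:monolinear}), and Proposition \ref{le:linearlong} gives $e^{-\lambda(x)t}v_x(t,\cdot)\to c\,N_x$ in $L^1(\R_+)^2$. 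Dividing out, one immediately gets convergence of the normalized age profiles $n_x^i(t,\cdot)/\rho_x^i(t)\to N_x^i/\Vert N_x^i\Vert_1$. For the masses $\rho_x^i(t)=\Vert n_x^i(t,\cdot)\Vert_1$, integrating the PDE in $a$ yields a two-dimensional ODE of the form $\dot\rho_x=(\textbf{A}+\mathcal{A}(t))\rho_x-\eta\Vert\rho_x\Vert_1\rho_x$ where, thanks to the profile convergence just obtained, $\mathcal{A}(t)\to 0$; this is an asymptotically autonomous logistic system, and a short direct analysis (Lemma \ref{le:edo}) gives $\rho_x(t)\to\overline\rho_x$.

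Note that the order is the \emph{reverse} of what you contemplate in your ``alternative route'': profiles converge first (via the linearization trick), and only then do the masses converge (via an asymptotically autonomous ODE whose coefficients are computed from the limiting profiles). You correctly sensed that proving mass convergence first is hard; the point is that you do not need to. The linearization-by-integrating-factor is the idea your proposal is missing, and it makes the a priori bounds, the compactness arguments, and the entire entropy machinery unnecessary.
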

\begin{rema}
In Proposition \ref{pr:explicitstatio} below, we will give an explicit expression for the stationary state $\overline{n}_x$.
\end{rema}
The proof of Proposition \ref{prop:monoedp} is based on the  study of the associated linear dynamics. We introduce the linear operator
\begin{equation}\label{eq:linearop}
\begin{array}{ccc}
A:D(A)\subset L^1(\R_+)^2 & \rightarrow & L^1(\R_+)^2 \\ 
u & \mapsto & -u' - \textbf{D}_x(a)u
\end{array} 
\end{equation}
where $D(A)=\lbrace u\in L^1(\R_+)^2: u \text{ abs. cont., }u'\in L^1(\R_+)^2, \text{ }u(0)=\int_{\R_+}\textbf{B}_x(\alpha)u(\alpha)d\alpha\rbrace$. It is well known that $A$ is the infinitesimal generator of a strongly continuous semigroup of linear operators \cite[Proposition 3.7]{webb1985theory} which describes the solutions of the linear system of McKendrick Von-Foerster Equation
\begin{equation}\label{eq:monolinear}
\begin{cases}
\partial_t v_x(t,a) +\partial_a v_x(t,a) = -\textbf{D}_x(a)v_x(t,a)\\
v_x(t,0)=\int_{\R_+}\textbf{B}_x(\alpha)v_x(t,\alpha)d\alpha.
\end{cases}
\end{equation}
In \cite{clement2017analysis}, a similar linear model is studied. The "entropy method" (\cite{perthame2006transport}) allows the authors to prove the convergence of the normalised solutions to some stable distribution in some weighted $L^1$-space. We need stronger convergence in order to study the long-time behaviour of the masses of the solutions of (\ref{eq:edpmono}). Since the birth  matrix $\textbf{B}_x$ is not irreducible (it is triangular) and the parameters $\textbf{B}_x$ and $\textbf{D}_x$ are not smooth, we cannot apply Theorems 4.9 and 4.11 in \cite{webb1985theory}. Nonetheless, we easily extend them to our reducible and non-smooth setting. 

\medskip \noindent We define the Malthusian parameter $\lambda(x)$ associated with some trait $x$ as the unique solution of the equation
\begin{equation}\label{eq:malthus}
\int_{0}^{x_b\wedge x_d}e^{-\lambda(x) a}da=1.
\end{equation}
Proposition \ref{le:linearlong} below justifies this definition and shows that $\lambda(x)$ is the asymptotic growth rate of the dynamics defined by (\ref{eq:monolinear}). Its proof is presented in the Appendix.
\\Let us define for all $z\in\mathbb{C}$ the $2\times 2$ matrix
\begin{equation}\label{eq:defmathF}
\textbf{F}(z) = \int_{\R_+}\textbf{B}_x(a)\exp\left( -\int_{0}^{a}(\textbf{D}_x(\alpha)+ z \textbf{I})d\alpha\right)da.
\end{equation}
Note that it is well-defined since $\textbf{B}_x$ has compact support.
\begin{prop}\label{le:linearlong}
Assume that $x\in\mathcal{V}$. Then the linear operator $A$ admits a unique pair of simple principal eigenelements $(\lambda(x),N_x)\in \R_+^{*}\times D(A)$ where the stable age distribution $N_x$ satisfies
\begin{equation*}
N_x^1(a)=e^{-\left(\lambda(x) a + (a-x_d)\vee 0 \right)},\quad N_x^2(a)=\frac{\left[\textbf{F}(\lambda(x))\right]_{21}}{1-\left[\textbf{F}(\lambda(x))\right]_{22}} e^{-(1+\lambda(x))a}.
\end{equation*}
Moreover, for any  non-negative solution $v_x(t,a)$ of (\ref{eq:monolinear}) in $L^1(\R_+)^2$, there exists a positive constant $c(v_x(0,.))$ such that $e^{-\lambda(x) t}v_x(t,.)\rightarrow c(v_x(0,.))N_x$ in $L^{1}(\R_+)^2$ as $t\rightarrow +\infty$.
\end{prop}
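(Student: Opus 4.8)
The plan is to reduce the spectral study of $A$ to the scalar characteristic equation $\det(\textbf{I}-\textbf{F}(z))=0$, exploiting that the birth and death matrices are lower triangular. First I would analyse the eigenvalue problem directly: $Au=\lambda u$ forces $u'=-(\textbf{D}_x(a)+\lambda\textbf{I})u$, hence $u(a)=\exp\!\big(-\int_0^a(\textbf{D}_x(\alpha)+\lambda\textbf{I})\,d\alpha\big)u(0)$, and the renewal condition defining $D(A)$ becomes $(\textbf{I}-\textbf{F}(\lambda))u(0)=0$. Since $\int_0^a\textbf{D}_x(\alpha)\,d\alpha=\mathrm{diag}\big((a-x_d)\vee 0,\,a\big)$, the matrix $\textbf{F}(z)$ is lower triangular with $[\textbf{F}(z)]_{11}=\int_0^{x_b\wedge x_d}e^{-za}\,da$ (the mortality term vanishes on $[0,x_b\wedge x_d]$ because $x_b\wedge x_d\le x_d$), $[\textbf{F}(z)]_{22}=\int_0^{x_b}e^{-(1+z)a}\,da$ and $[\textbf{F}(z)]_{21}=\I_{x_d<x_b}\int_{x_d}^{x_b}e^{x_d-(1+z)a}\,da$, so that $\det(\textbf{I}-\textbf{F}(z))=(1-[\textbf{F}(z)]_{11})(1-[\textbf{F}(z)]_{22})$. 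The first factor vanishes at $z=\lambda(x)$ exactly by the definition (\ref{eq:malthus}), and since $x\in\mathcal{V}$ means $x_b\wedge x_d>1$, the strictly decreasing map $z\mapsto\int_0^{x_b\wedge x_d}e^{-za}\,da$ meets the value $1$ at a unique point $\lambda(x)>0$. There $[\textbf{F}(\lambda(x))]_{22}<\int_0^{x_b}e^{-a}\,da<1$, so $\textbf{I}-\textbf{F}(\lambda(x))$ has one-dimensional kernel spanned by $\big(1,\,[\textbf{F}(\lambda(x))]_{21}/(1-[\textbf{F}(\lambda(x))]_{22})\big)$; feeding this $u(0)$ into the exponential formula produces precisely the announced $N_x=(N_x^1,N_x^2)$, which one checks belongs to $D(A)$.

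Next I would establish that $\lambda(x)$ is the dominant, algebraically simple eigenvalue. For any complex root of the first factor, $1=\big|\int_0^{x_b\wedge x_d}e^{-za}\,da\big|\le\int_0^{x_b\wedge x_d}e^{-(\mathrm{Re}\,z)a}\,da$, so $\mathrm{Re}\,z\le\lambda(x)$ by monotonicity, with equality forcing $\mathrm{Im}\,z=0$ through the equality case in the triangle inequality; for any root of the second factor, $1\le\int_0^{x_b}e^{-(1+\mathrm{Re}\,z)a}\,da<1$ when $\mathrm{Re}\,z\ge 0$, hence $\mathrm{Re}\,z<0$. Thus $\lambda(x)$ is the rightmost characteristic root and the only one on the line $\mathrm{Re}\,z=\lambda(x)$. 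It is a simple zero of $z\mapsto 1-[\textbf{F}(z)]_{11}$ because $\frac{d}{dz}[\textbf{F}(z)]_{11}=-\int_0^{x_b\wedge x_d}ae^{-za}\,da\neq 0$, and it is not a zero of the second factor; together with the one-dimensional kernel found above this gives algebraic simplicity, with eigenspace $\mathbb{R}N_x$. A genuine spectral gap holds since each factor, being of exponential type, has only finitely many zeros in any right half-plane. That $\sigma(A)$ coincides with the set of these roots and consists solely of eigenvalues follows from the explicit form of the resolvent $(z-A)^{-1}$ obtained by solving the above boundary-value problem with a source term; this is the point where I would adapt \cite[Ch.~4]{webb1985theory} to our reducible and merely measurable coefficients.

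For the long-time behaviour I would use the triangular structure again. The component $v_x^1$ solves on its own the scalar renewal equation with fertility $\I_{a\le x_b\wedge x_d}$ and mortality $\I_{a>x_d}$; its net reproduction number is $x_b\wedge x_d>1$, its Malthusian parameter is $\lambda(x)$, and the scalar renewal theorem (extended to non-smooth rates as above) gives $e^{-\lambda(x)t}v_x^1(t,\cdot)\to c_1 N_x^1$ in $L^1(\mathbb{R}_+)$, with $c_1=c_1(v_x^1(0,\cdot))>0$ whenever $v_x^1(0,\cdot)\not\equiv 0$. Writing $B_2(t):=v_x^2(t,0)$ and integrating the $(x_b,0)$-equation along characteristics (so $v_x^2(t,\alpha)=e^{-\alpha}B_2(t-\alpha)$ for $t>\alpha$), $B_2$ satisfies for $t\ge x_b$ the renewal equation $B_2(t)=\gamma(t)+\int_0^{x_b}e^{-\alpha}B_2(t-\alpha)\,d\alpha$ with \emph{defective} kernel ($\int_0^{x_b}e^{-\alpha}\,d\alpha<1$) and forcing $\gamma(t)=\I_{x_d<x_b}\int_{x_d}^{x_b}v_x^1(t,\alpha)\,d\alpha\sim c_1[\textbf{F}(\lambda(x))]_{21}\,e^{\lambda(x)t}$. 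Since $\widehat{B_2}(z)=\widehat\gamma(z)/(1-[\textbf{F}(z)]_{22})$ has its rightmost singularity at $z=\lambda(x)$, a simple pole because $1-[\textbf{F}(\lambda(x))]_{22}\neq 0$, a Laplace inversion yields $B_2(t)\sim c_1\,[\textbf{F}(\lambda(x))]_{21}/(1-[\textbf{F}(\lambda(x))]_{22})\,e^{\lambda(x)t}$; combined with $v_x^2(t,\alpha)=e^{-\alpha}B_2(t-\alpha)$ and dominated convergence against the weight $e^{-(1+\lambda(x))\alpha}$ (the $\alpha>t$ tail being bounded by $e^{-(1+\lambda(x))t}\|v_x^2(0,\cdot)\|_1$) this upgrades to $e^{-\lambda(x)t}v_x^2(t,\cdot)\to c_1 N_x^2$ in $L^1(\mathbb{R}_+)$ — with the \emph{same} constant $c_1$, which therefore is the $c(v_x(0,\cdot))$ of the statement and depends only on the first component of the initial datum.

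I expect the main obstacle to be the functional-analytic bookkeeping rather than any single estimate: with coefficients that are mere indicator functions one cannot quote \cite[Theorems~4.9 and 4.11]{webb1985theory} and must re-derive that the semigroup generated by $A$ has growth bound $\lambda(x)$ and that the remainder of the spectrum is confined to a strictly smaller half-plane, keeping track of the reducibility (which is exactly why $N_x^2$, and the limiting constant, are inherited from the first component, and why $N_x^2\equiv 0$ when $x_d\ge x_b$ must still be compatible with $\lambda(x)$ being \emph{the} simple principal eigenvalue). The secondary difficulty is promoting the Laplace/renewal asymptotics of the birth functions from pointwise to $L^1$-convergence of the age densities, uniformly enough to pin down the common constant.
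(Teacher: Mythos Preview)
Your eigenvalue analysis is essentially the paper's: both reduce the problem to $\det(\textbf{I}-\textbf{F}(z))=0$, exploit the triangular structure to factor, identify $\lambda(x)$ via (\ref{eq:malthus}), and read off $N_x$ from the kernel of $\textbf{I}-\textbf{F}(\lambda(x))$. Your dominance argument via the triangle inequality is more explicit than the paper's, which simply invokes analytic arguments in the style of \cite[Theorem~4.10]{webb1985theory} to get finitely many characteristic roots in any right half-plane.

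The genuine divergence is in the asymptotic part. The paper stays in the abstract semigroup framework: it bounds the essential growth rate $w_1(A)\le -1$ via the measure of non-compactness (following \cite{pruss1981equilibrium}), combines this with the characteristic-root analysis to obtain the spectral gap (\ref{eq:spectralgap}), and then quotes (a mild adaptation of) \cite[Theorem~4.9]{webb1985theory} to conclude $e^{-\lambda(x)t}v_x(t,\cdot)\to c\,N_x$. You instead decouple the system and run renewal theory component by component: the scalar renewal theorem handles $v_x^1$, and $v_x^2$ is driven by a defective renewal equation with forcing inherited from $v_x^1$, resolved by Laplace inversion. Your route is more hands-on and has the virtue of making transparent that the limiting constant is determined by $v_x^1(0,\cdot)$ alone (and that $N_x^2\equiv 0$ when $x_d\ge x_b$), whereas the paper's semigroup argument treats the vector problem in one stroke and leans on existing machinery. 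Both are sound; the trade-off is explicitness versus economy. One small caveat: your Laplace-inversion step for $B_2$ should be phrased as a renewal-theoretic argument (the defective kernel gives a bounded resolvent, so the asymptotics of $B_2$ are those of $\gamma$ convolved with it) rather than a bare contour shift, since you need $L^1$ and not merely pointwise control to pass to the age density.
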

Let us now give a lemma which will be used to study the long-time behaviour of the
masses of the solutions of (\ref{eq:edpmono}) and whose proof is postponed to the Appendix.
\begin{lemm}\label{le:edo}
Let $(m_{11},m_{12},m_{22})\in \R_+^{*}\times\R_+\times\R_{-}^{*}$. Let $\mathcal{D}_{11}(t),\mathcal{D}_{12}(t),\mathcal{D}_{22}(t)$ be continuous functions from $\R_+$ to $\R$ which tend to zero as $t\rightarrow \infty$. Let us denote
\begin{equation*}
M=\left(\begin{array}{cc}
m_{11} & 0 \\ 
m_{21} & m_{22}
\end{array} \right),\quad \mathcal{D}(t)=\left(\begin{array}{cc}
\mathcal{D}_{11}(t) & 0 \\ 
\mathcal{D}_{21}(t) & \mathcal{D}_{22}(t)
\end{array} \right).
\end{equation*}
Then any solution $(z(t),t\geq 0)$ of the equation
\begin{equation}\label{eq:ode}
\frac{\text{d}z(t)}{\text{dt}}=(M +\mathcal{D}(t))z(t) - \eta\Vert z(t)\Vert_1\text{ } z(t)
\end{equation}
started at $z(0)\in \R_+^{*}\times \R_+$ converges to a vector $\overline{z}$ which satisfies
\begin{equation}\label{eq:statioedo}
\overline{z}_1= \frac{m_{11}}{\eta}\frac{1}{1+\frac{m_{21}}{m_{11}-m_{22}}},\quad \overline{z}_2 = \frac{m_{21}}{m_{11}-m_{22}}\overline{z}_1.
\end{equation}
\end{lemm}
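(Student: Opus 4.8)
The plan is to treat \eqref{eq:ode} as an asymptotically autonomous planar system and first analyze the limiting autonomous system, i.e.\ \eqref{eq:ode} with $\mathcal{D}(t)\equiv 0$. Because $M$ is lower-triangular, the first coordinate decouples: $z_1$ solves a scalar logistic-type equation $\dot z_1 = (m_{11} - \eta\Vert z\Vert_1)z_1$. The sign structure ($m_{11}>0$, $m_{22}<0$, $m_{21}\ge 0$) forces the orbit started in $\R_+^{*}\times\R_+$ to stay in that quadrant (the axes and their neighbourhoods are handled by inspecting the vector field on $\{z_1=0\}$ and $\{z_2=0\}$), and to remain bounded: for large $\Vert z\Vert_1$ the logistic damping $-\eta\Vert z\Vert_1 z$ dominates, so there is an absorbing ball. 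I would then locate the equilibria of the autonomous system: setting $\dot z=0$ gives either $z=0$ or $(m_{11}-\eta\Vert z\Vert_1)z_1 = 0$ together with $m_{21}z_1 + (m_{22}-\eta\Vert z\Vert_1)z_2=0$. The nonzero equilibrium is exactly $\overline z$ in \eqref{eq:statioedo}; one checks it lies in the open quadrant since $m_{11}>m_{22}$ makes the denominator $1 + m_{21}/(m_{11}-m_{22})$ positive, and $\eta\Vert\overline z\Vert_1 = m_{11}$ by construction.

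Next I would prove global convergence to $\overline z$ for the autonomous system. Since $z_2$ is slaved, the cleanest route is: from $\dot z_1 = (m_{11}-\eta z_1 - \eta z_2)z_1$ and $\dot z_2 = m_{21}z_1 + (m_{22}-\eta z_1-\eta z_2)z_2$, observe the system is competitive/cooperative-type in a planar region, so by the Poincaré–Bendixson theorem combined with Dulac's criterion (multiplier $1/(z_1 z_2)$ or $1/z_1$ should kill the divergence's sign) there are no periodic orbits; the bounded forward orbit then converges to an equilibrium, which must be $\overline z$ because $0$ is repelling in the $z_1$ direction ($m_{11}>0$) and the boundary equilibria on the axes are likewise excluded by the sign of the linearization. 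Alternatively, and perhaps more robustly, I would substitute $w = z_1+z_2$ (or work directly with $s=\Vert z\Vert_1$, noting $\dot s = m_{11}z_1 + m_{21}z_1 + m_{22}z_2 - \eta s\,s$) to reduce to a scalar comparison argument pinning $\limsup$ and $\liminf$ of $z_1$, then feeding that into the scalar linear-with-decaying-forcing equation for $z_2$.

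Finally I would transfer the conclusion to the full nonautonomous equation \eqref{eq:ode}. Since $\mathcal{D}(t)\to 0$, the system is asymptotically autonomous with limit system the one analyzed above; the standard Markus/Thieme theory of asymptotically autonomous differential equations (the $\omega$-limit set of a bounded orbit is a nonempty, compact, connected, chain-recurrent set of the limit system) yields convergence to $\overline z$ provided one first re-establishes that orbits of \eqref{eq:ode} stay in (a compact subset of) the positive quadrant. Positivity is preserved because on $\{z_1=0\}$ one has $\dot z_1=0$ and on $\{z_2=0\}$ one has $\dot z_2 = m_{21}z_1 \ge 0$; boundedness follows as before once $|\mathcal{D}(t)|$ is small, i.e.\ for $t\ge t_0$, after which $\dot s \le (m_{11}+m_{21}+\varepsilon)s - \eta s^2$ gives a uniform bound, and the solution cannot blow up on $[0,t_0]$ by continuity.

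I expect the main obstacle to be the global-in-time convergence of the limiting autonomous planar system: ruling out that the orbit spirals or that the $\omega$-limit set is a nontrivial connected invariant set rather than the single point $\overline z$. The triangular structure of $M$ is the key simplification to exploit here — it lets one first control $z_1$ via a scalar logistic inequality and only then handle $z_2$ as a linearly forced scalar equation — so the planar machinery (Poincaré–Bendixson / Dulac) is really only a fallback; the sign condition $m_{22}<0$ is exactly what guarantees $z_2$ cannot run away once $z_1$ is pinned. The asymptotically-autonomous step is then routine given a careful a priori bound.
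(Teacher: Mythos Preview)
Your overall architecture matches the paper's: treat \eqref{eq:ode} as asymptotically autonomous, analyze the limit equation $\dot y = My - \eta\Vert y\Vert_1 y$ first, then invoke the Markus/Thieme theory to transfer the conclusion. Where you diverge is in how you handle the autonomous limit system. You propose planar ODE machinery (Dulac with multiplier $1/(z_1z_2)$, then Poincar\'e--Bendixson) or a scalar comparison via $s=\Vert z\Vert_1$; the paper instead exploits the fact that the nonlinear term $-\eta\Vert y\Vert_1\, y$ is a \emph{scalar} multiple of $y$, so the direction of $y$ evolves exactly as for the linear system $\dot v = Mv$: concretely $y_1(t)/y_2(t) = [e^{Mt}y(0)]_1/[e^{Mt}y(0)]_2 \to \overline z_1/\overline z_2$, since $m_{11}$ is the dominant eigenvalue of $M$. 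This pins the $\omega$-limit set inside the ray $\Delta = \{z : z_1\overline z_2 = z_2\overline z_1\}$, and on $\Delta$ the dynamics collapse to a one-dimensional logistic equation with equilibrium $\overline z$. Your Dulac computation goes through (the divergence of the field divided by $z_1z_2$ is strictly negative on the open quadrant), so your route is sound; the paper's route is shorter and more structural, bypassing planar topology entirely by using the multiplicative form of the competition term.

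One small caveat on your positivity step for the \emph{nonautonomous} system: on $\{z_2=0\}$ you only get $\dot z_2 = (m_{21}+\mathcal D_{21}(t))z_1$, which need not be nonnegative for small $t$. The paper does not try to keep the orbit in the quadrant; instead it argues directly, via differential inequalities valid for $t\ge t_0$ once $\vert\mathcal D(t)\vert$ is small, that the $\omega$-limit set of the nonautonomous orbit cannot lie in $\{0\}\times\R_+$ (showing $y_1(t)\to 0$ leads to a contradiction). That, together with asymptotic stability of $\overline z$ for the limit system, is precisely what Thieme's Theorem~1.2 needs.
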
 
We conclude this section by proving Proposition \ref{prop:monoedp}.
\begin{proof}[of Proposition \ref{prop:monoedp}]
We prove the first assertion. Let $x\in\mathcal{V}$ and let $\lambda(x)$ be the principal eigenvalue of $A$ given by Proposition \ref{le:linearlong}. Let $\overline{n}_x$ be the (unique) principal eigenvector of $A$ which satisfies $\eta\,\Vert \overline{n}_x\Vert_1 =\lambda(x)$. It is obvious that $\overline{n}_x$ is a non-trivial stationary state of (\ref{eq:edpmono}). Reciprocally, let $\overline{n}$ be a stationary state of (\ref{eq:edpmono}). Then we have necessarily  $\lambda(x) =\eta\,\Vert \overline{n}\Vert_1$ and that $\overline{n}$ is an eigenvector of $A$ associated with the eigenvalue $\lambda(x)$ that allows us to conclude. We now study the long-time behaviour of the solutions. Let us define 
\begin{equation*}
v_x(t,a)=\exp\left(\eta\int_{0}^{t}\Vert n_x(s,.)\Vert_1 ds\right)n_x(t,a).
\end{equation*}
It is straightforward to prove that $v_x$ is a solution of the linear equation (\ref{eq:monolinear}). By Proposition \ref{le:linearlong} we have $e^{-\lambda(x) t}v_x(t,.)\rightarrow c(v_x(0,.))N_x$ in $L^1(\R_+)^2$ as $t\rightarrow\infty$. We deduce that for $i\in\lbrace 1,2\rbrace$ and denoting $\rho_x^i(t)=\Vert n_x^i(t,.)\Vert_1$,
\begin{equation}\label{eq:convnormsol}
\frac{n_x^{i}(t,.)}{\rho_x^i(t)}=\frac{e^{-\lambda(x) t}v_x^{i}(t,.)}{\int_{\R_+}e^{-\lambda(x) t}v_x^{i}(t,\alpha)d\alpha}\rightarrow \frac{N^{i}_x}{\int_{\R_+}N_x^i(\alpha)d\alpha}
\end{equation}
in $L^1(\R_+)$ as $t\rightarrow +\infty$. We now study the behaviour of the masses $\rho_x(t)$. By taking the derivative under the integral, we obtain that for $i\in\lbrace 1,2\rbrace$
\begin{align*}
\frac{\text{d}\rho_x^i(t)}{\text{dt}}&=\sum_{j=1}^{2}\int_{\R_+}(\left[\textbf{B}_x(\alpha)\right]_{ij} -\left[\textbf{D}_x(\alpha)\right]_{ij})n_x^j(t,\alpha)d\alpha - \eta \rho_x^i(t)\Vert \rho_x(t)\Vert_1\\
&= \sum_{j=1}^{2}\rho_x^j(t)\int_{\R_+}(\left[\textbf{B}_x(\alpha)\right]_{ij} -\left[\textbf{D}_x(\alpha)\right]_{ij})\frac{n_x^j(t,\alpha)}{\rho_x^j(t)}d\alpha -\eta \rho_x^i(t)\Vert \rho_x(t)\Vert_1.
\end{align*}
Hence we obtain by (\ref{eq:convnormsol}) that
\begin{equation*}
\frac{\text{d}\rho_x(t)}{\text{dt}}=(\textbf{A} +\mathcal{A}(t))\rho_x(t) - \eta\Vert \rho_x(t)\Vert_1\rho_x(t)
\end{equation*}
where $\textbf{A}=(a_{ij})$ and for $(i,j)\in \lbrace 1,2\rbrace^2$,
\begin{equation}\label{eq:a}
a_{ij}= \int_{\R_+}(\left[\textbf{B}_x(\alpha)\right]_{ij} -\left[\textbf{D}_x(\alpha)\right]_{ij})\frac{N^{j}_x(\alpha)}{\int_{\R_+}N^{j}_x}d\alpha,
\end{equation}
and $\mathcal{A}(t)$ is a continuous function decreasing to zero as t tends to infinity. Since $a_{11}=\lambda(x) >0$, $a_{21}\geq 0$, $a_{12}=0$ and $a_{22}<0$, Lemma \ref{le:edo} allows us to conclude that $\rho_x(t)$ converges to $\overline{\rho}_x$, which is defined as the unique  solution of the equation $A\overline{\rho}_x - \eta\Vert \overline{\rho}_x\Vert_1 \overline{\rho}_x =0$. We easily solve this system and we obtain that $\overline{\rho}_x$ satisfies 
\begin{equation}\label{eq:satiomass}
\overline{\rho}^1_x=\frac{\lambda(x)}{\eta}\frac{1}{1 + \frac{a_{21}}{\lambda(x) -a_{22}}},\quad \overline{\rho}^2_x = \frac{a_{21}}{\lambda(x) -a_{22}}\overline{\rho}^1_x.
\end{equation}
\end{proof}
We conclude this section by writing more explicit formulas for the stationary state $\overline{n}_x$.
\begin{prop}\label{pr:explicitstatio}
Let $x\in\mathcal{V}$ then we have
\begin{equation*}
\overline{n}^1_x(a)=\overline{\rho}^1_x\frac{N_x^1(a)}{\int_{\R_+} N_x^1(\alpha)d\alpha},\quad \overline{n}^2_x(a)=\overline{\rho}^2_x \frac{N_x^2(a)}{\int_{\R_+} N_x^2(\alpha)d\alpha}
\end{equation*}
where
\begin{equation*}
\overline{\rho}^1_x=\frac{\lambda(x)}{\eta}\frac{1}{1 + \frac{a_{21}}{\lambda(x) -a_{22}}},\quad \overline{\rho}^2_x = \frac{a_{21}}{\lambda(x) -a_{22}}\overline{\rho}^1_x,
\end{equation*}
$N_x$ is defined in Proposition \ref{le:linearlong} and $a_{ij}$ are defined in (\ref{eq:a}).
\end{prop}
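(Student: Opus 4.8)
The plan is to assemble facts already established. By the proof of Proposition~\ref{prop:monoedp}, the non-trivial stationary state $\overline{n}_x$ is exactly the principal eigenvector of the operator $A$, normalized so that $\eta\,\|\overline{n}_x\|_1=\lambda(x)$. Proposition~\ref{le:linearlong} asserts that the principal eigenpair $(\lambda(x),N_x)$ is \emph{simple}, so the principal eigenspace of $A$ is one-dimensional and spanned by $N_x$; hence there is a single constant $c>0$ with $\overline{n}_x=c\,N_x$, i.e.\ $\overline{n}^i_x(a)=c\,N_x^i(a)$ for $i\in\{1,2\}$ and a.e.\ $a\ge 0$. Taking $L^1$-norms componentwise and writing $\overline{\rho}^i_x:=\|\overline{n}^i_x\|_1$ gives $\overline{\rho}^i_x=c\int_{\R_+}N_x^i(\alpha)\,d\alpha$, and whenever $\int_{\R_+}N_x^i>0$, eliminating $c$ yields the announced identity $\overline{n}^i_x(a)=\overline{\rho}^i_x\,N_x^i(a)/\int_{\R_+}N_x^i(\alpha)\,d\alpha$ (in the degenerate case $N_x^2\equiv0$, i.e.\ $x_d\ge x_b$ with no Lansing effect, one has $\overline{\rho}^2_x=0$ and reads $\overline{n}^2_x\equiv0$). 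Note that this automatically produces the consistency relation $\overline{\rho}^2_x/\overline{\rho}^1_x=\int_{\R_+}N_x^2/\int_{\R_+}N_x^1$, so nothing further is needed there.

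It then remains only to replace $\overline{\rho}^1_x$ and $\overline{\rho}^2_x$ by their explicit values. Since $\overline{n}_x$ is itself a stationary $L^1$ solution of \eqref{eq:edpmono} with $\overline{n}^1_x\not\equiv0$, the convergence part of Proposition~\ref{prop:monoedp} applies to the constant trajectory $n_x(t,\cdot)\equiv\overline{n}_x$: its mass vector must equal the limit $\overline{\rho}_x$ computed there, which is the unique solution of $\textbf{A}\,\overline{\rho}_x=\eta\|\overline{\rho}_x\|_1\,\overline{\rho}_x$ and is given by \eqref{eq:satiomass} in terms of $\lambda(x)$ and the coefficients $a_{ij}$ of \eqref{eq:a}. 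Plugging this into the previous display finishes the proof.

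I do not anticipate a real obstacle. The one subtlety is that the constant $c$ is \emph{common} to both components — one cannot rescale $\overline{n}^1_x$ and $\overline{n}^2_x$ independently — which is exactly what simplicity of the principal eigenvalue guarantees; and one must glue together the two facets of the proof of Proposition~\ref{prop:monoedp}, namely the description of $\overline{n}_x$ as a normalized principal eigenvector and the mass computation $\rho_x(t)\to\overline{\rho}_x$ with $\overline{\rho}_x$ as in \eqref{eq:satiomass}. A more computational alternative avoids citing \eqref{eq:satiomass}: substitute $\overline{n}_x=c\,N_x$ into $\eta\|c\,N_x\|_1=\lambda(x)$ to get $c=\lambda(x)/(\eta(\int_{\R_+}N_x^1+\int_{\R_+}N_x^2))$, then evaluate $\int_{\R_+}N_x^1$ and $\int_{\R_+}N_x^2$ from the closed forms of Proposition~\ref{le:linearlong}; this is elementary but longer and also requires the hand-checked identity $\int_{\R_+}N_x^2/\int_{\R_+}N_x^1=a_{21}/(\lambda(x)-a_{22})$, so the first route is preferable.
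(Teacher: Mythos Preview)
Your proposal is correct and follows essentially the same path as the paper's proof, which simply reads ``direct consequence of \eqref{eq:convnormsol} and \eqref{eq:satiomass}''. The only cosmetic difference is that you obtain the shape identity $\overline{n}^i_x=\overline{\rho}^i_x\,N^i_x/\int N^i_x$ by invoking simplicity of the principal eigenvalue and writing $\overline{n}_x=c\,N_x$, whereas the paper extracts it from the limit \eqref{eq:convnormsol} applied at the stationary state; both routes are one line and amount to the same thing, and for the mass values both rely on \eqref{eq:satiomass}.
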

\begin{proof}
It is a direct consequence of (\ref{eq:convnormsol}) and (\ref{eq:satiomass}).
\end{proof}
\begin{biol}
Equation (\ref{eq:edpmono}) describes the dynamics of a large monomorphic population with trait $x$. Proposition \ref{prop:monoedp} shows that the age distribution of the population stabilizes around the equilibrium $\overline{n}_x =(\overline{n}_x^1,\overline{n}_x^2)$. The equilibria $\overline{n}^1_x$ and $\overline{n}_x^2$ describe the age equilibria of the population with trait $(x_b,x_d)$ and $(x_b,0)$ respectively. We observe that if $x_b<x_d$, then  $a_{21}=0= \overline{\rho}^2_x$ and  Proposition \ref{pr:explicitstatio} leads to the equilibrium  $\overline{n}_x=(\overline{n}_x^1,0)$. 
\end{biol}
\subsection{Bimorphic dynamics}
Let $x=(x_b,x_d)\in\mathcal{V}$ and $ y=(y_b,y_d)\in\mathcal{V}$. We consider a bimorphic initial population such that the sequence $(Z_0^K)_K$ weakly converges to $\delta_x n_x(a)da +\delta_y n_y(a)da$ as $K$ tends to infinity. Using similar arguments as in \cite{tran2008large}, we can prove that the sequence of processes $(Z^K)_K$ converges in probability,  on any finite time interval, to the solution $((n_x(t,.),n_y(t,.)),t\geq 0)\in L^1(\R_+)^4$ of the following system of non-linear partial differential equations
\begin{equation}\label{eq:bimo}
\begin{cases}
\partial_t n_x(t,a) +\partial_a n_x(t,a) = -\left( \textbf{D}_x(a) +\eta (\Vert n_x(t,.)\Vert_1 +\Vert n_y(t,.)\Vert_1)\textbf{I}\right)n_x(t,a)\\
n_x(t,0)=\int_{\R_+}\textbf{B}_x(\alpha)n_x(t,\alpha)d\alpha\\
\partial_t n_y(t,a) +\partial_a n_y(t,a) = -\left( \textbf{D}_y(a) +\eta (\Vert n_x(t,.)\Vert_1 +\Vert n_y(t,.)\Vert_1)\textbf{I}\right)n_y(t,a)\\
n_y(t,0)=\int_{\R_+}\textbf{B}_y(\alpha)n_y(t,\alpha)d\alpha,
\end{cases}
\end{equation}
with initial condition given by $(n_x(0,a),n_y(0,a))=(n_x(a),n_y(a))$. Equations (\ref{eq:bimo}) describe the dynamics of a dimorphic population with traits $x$ and $y$ interacting through competition. We prove the following proposition.
\begin{prop}\label{pr:bimoedp}
Let $x,y\in\mathcal{V}$ such that $x_b\wedge x_d< y_b\wedge y_d$.  Then any $L^1(\R_+)^4$-solution $(n_x(t,.),n_y(t,.))$ of (\ref{eq:bimo})  satisfying $n_y^1(t,.)\neq 0$ converges to $(0,\overline{n}_y)$ in $L^1(\R_+)^4$ as $t\rightarrow\infty$.
\end{prop}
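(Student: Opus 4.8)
The plan is to mimic the proof of Proposition~\ref{prop:monoedp} — peel off the common competition pressure, reduce to a coupled ODE system for the subpopulation masses, and use the normalised age profiles — and then insert a \emph{competitive exclusion} step which exploits the strict gap between the Malthusian parameters.

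First I would set $C(t)=\eta(\Vert n_x(t,.)\Vert_1+\Vert n_y(t,.)\Vert_1)$ and $v_x(t,a)=\exp\!\big(\int_0^t C(s)\,ds\big)n_x(t,a)$, $v_y(t,a)=\exp\!\big(\int_0^t C(s)\,ds\big)n_y(t,a)$. As in the monomorphic case, $v_x$ and $v_y$ solve the linear equation (\ref{eq:monolinear}) for the traits $x$ and $y$. By Proposition~\ref{le:linearlong}, $e^{-\lambda(x)t}v_x(t,.)\to c_xN_x$ and $e^{-\lambda(y)t}v_y(t,.)\to c_yN_y$ in $L^1(\R_+)^2$, and, exactly as in the proof of Proposition~\ref{prop:monoedp}, the hypothesis $n_y^1\neq 0$ guarantees $c_y>0$. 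Normalising the age variable gives $n_x^i(t,.)/\rho_x^i(t)\to N_x^i/\Vert N_x^i\Vert_1$ and $n_y^i(t,.)/\rho_y^i(t)\to N_y^i/\Vert N_y^i\Vert_1$ in $L^1(\R_+)$, where $\rho_x^i(t)=\Vert n_x^i(t,.)\Vert_1$, $\rho_y^i(t)=\Vert n_y^i(t,.)\Vert_1$. Differentiating under the integral sign then yields
\begin{equation*}
\frac{d\rho_x}{dt}=(\textbf{A}_x+\mathcal{A}_x(t))\rho_x-\eta(\Vert\rho_x\Vert_1+\Vert\rho_y\Vert_1)\rho_x,\qquad
\frac{d\rho_y}{dt}=(\textbf{A}_y+\mathcal{A}_y(t))\rho_y-\eta(\Vert\rho_x\Vert_1+\Vert\rho_y\Vert_1)\rho_y,
\end{equation*}
where $\textbf{A}_x,\textbf{A}_y$ are lower triangular with $[\textbf{A}_x]_{11}=\lambda(x)$, $[\textbf{A}_y]_{11}=\lambda(y)$, negative $(2,2)$-entries, nonnegative $(2,1)$-entries, and $\mathcal{A}_x(t),\mathcal{A}_y(t)\to 0$.

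Now the core step. Since $L\mapsto\lambda$ is strictly increasing through (\ref{eq:malthus}), the assumption $x_b\wedge x_d<y_b\wedge y_d$ gives $\lambda(x)<\lambda(y)$. Writing $S(t)=\Vert\rho_x(t)\Vert_1+\Vert\rho_y(t)\Vert_1$, boundedness of the entries of $\textbf{A}_x,\textbf{A}_y$ and of $\mathcal{A}_x,\mathcal{A}_y$ yields a scalar bound $\dot S\le KS-\eta S^2$, so $S$ is bounded. The first components satisfy $\dot\rho_x^1=(\lambda(x)+o(1))\rho_x^1-\eta S(t)\rho_x^1$ and $\dot\rho_y^1=(\lambda(y)+o(1))\rho_y^1-\eta S(t)\rho_y^1$, hence $\tfrac{d}{dt}\log(\rho_x^1/\rho_y^1)=\lambda(x)-\lambda(y)+o(1)$; integrating and using $\rho_y^1(t)>0$ for all $t$ (because $n_y^1\neq 0$) forces $\rho_x^1(t)/\rho_y^1(t)\to 0$, and since $\rho_y^1\le S$ is bounded this gives $\rho_x^1(t)\to 0$. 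Plugging this into $\dot\rho_x^2=([\textbf{A}_x]_{22}+o(1))\rho_x^2+[\textbf{A}_x]_{21}\rho_x^1-\eta S(t)\rho_x^2$ and using $[\textbf{A}_x]_{22}<0$ in a comparison/Gronwall argument gives $\rho_x^2(t)\to 0$; therefore $\Vert n_x(t,.)\Vert_{L^1}=\Vert\rho_x(t)\Vert_1\to 0$. (If $n_x^1\equiv 0$, the $x$-population reduces to the scalar renewal equation for $n_x^2$, whose growth rate $\mu_x$ satisfies $\mu_x<\lambda(x)<\lambda(y)$ by positivity of $1-[\textbf{F}(\lambda(x))]_{22}$, and the same comparison applies; the case $n_x^2\equiv 0$ is analogous and simpler.)

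Finally, with $\eta\Vert\rho_x(t)\Vert_1\to 0$, the equation for $\rho_y$ takes the form $\dot\rho_y=(\textbf{A}_y+\widetilde{\mathcal{A}}_y(t))\rho_y-\eta\Vert\rho_y\Vert_1\rho_y$ with $\widetilde{\mathcal{A}}_y(t)=\mathcal{A}_y(t)-\eta\Vert\rho_x(t)\Vert_1\textbf{I}\to 0$, which is exactly the setting of Lemma~\ref{le:edo} ($m_{11}=\lambda(y)>0$, $m_{22}=[\textbf{A}_y]_{22}<0$, $m_{21}=[\textbf{A}_y]_{21}\ge 0$, and $\rho_y(t_0)\in\R_+^{*}\times\R_+$ for $t_0$ large); hence $\rho_y(t)\to\overline{\rho}_y$, the mass vector of $\overline{n}_y$ from Proposition~\ref{pr:explicitstatio}. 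Combining with the normalised convergence of the $y$-profiles yields $n_y^i(t,.)=\rho_y^i(t)\tfrac{n_y^i(t,.)}{\rho_y^i(t)}\to\overline{\rho}_y^i\tfrac{N_y^i}{\Vert N_y^i\Vert_1}=\overline{n}_y^i$, so $(n_x(t,.),n_y(t,.))\to(0,\overline{n}_y)$ in $L^1(\R_+)^4$. I expect the main obstacle to be the competitive-exclusion step: one must prove not merely that morph $y$ outgrows morph $x$ but that $x$ is driven all the way to zero in $L^1$, which is where the strict inequality $\lambda(x)<\lambda(y)$ and the bootstrapping from the first to the second subpopulation component are essential, and where one must check that the asymptotically vanishing perturbations $\mathcal{A}_x(t),\mathcal{A}_y(t)$ are controlled uniformly enough to legitimise the $o(1)$ comparisons.
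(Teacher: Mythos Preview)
Your proposal is correct and follows essentially the same route as the paper: peel off the common competition term via $v_u=e^{\int_0^t C}\,n_u$, invoke Proposition~\ref{le:linearlong}, do competitive exclusion on the first components using $\lambda(x)<\lambda(y)$, and then fall back on the monomorphic argument (Lemma~\ref{le:edo}) for the surviving $y$-population.

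The only noteworthy difference is in the exclusion step. You first derive the ODE system for $(\rho_x,\rho_y)$ and then compute $\tfrac{d}{dt}\log(\rho_x^1/\rho_y^1)=\lambda(x)-\lambda(y)+o(1)$, which forces you to control the perturbations $\mathcal{A}_x(t),\mathcal{A}_y(t)$ at that stage. The paper bypasses this: since $v_x$ and $v_y$ share the \emph{same} exponential prefactor, one has directly $\rho_y^1(t)/\rho_x^1(t)=\int v_y^1(t,\cdot)\big/\int v_x^1(t,\cdot)$, and the asymptotics $e^{-\lambda(u)t}\int v_u^1\to c_u\Vert N_u^1\Vert_1$ from Proposition~\ref{le:linearlong} immediately give $\rho_y^1/\rho_x^1\to+\infty$ without any ODE bookkeeping. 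Your path is a bit longer but has the merit of making boundedness of $S(t)$ and the bootstrap $\rho_x^1\to 0\Rightarrow\rho_x^2\to 0$ explicit, points the paper leaves to the reader.
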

\begin{proof}
Let $x,y\in\mathcal{V}$ such that $x_b\wedge x_d < y_b\wedge y_d$. Then by \eqref{eq:malthus}, we have $\lambda(x)<\lambda(y)$. For $u\in \lbrace x,y\rbrace$ we define
\begin{equation*}
v_u(t,a)=\exp\left(\eta\int_{0}^{t}(\Vert n_x(s,.)\Vert_1 +\Vert n_y(s,.)\Vert_1)ds\right)n_u(t,a).
\end{equation*}
The functions $v_x$ and $v_y$ are solutions of the linear systems (\ref{eq:monolinear}). We deduce from Proposition \ref{le:linearlong} that $e^{-\lambda(u) t}v_u(t,.)\rightarrow c(v_u(0,.))N_u$ in $L^1(\R_+)^2$ as $t\rightarrow  + \infty$, for a positive constant $c(v_u(0,.))$. Hence, we obtain that $e^{-\lambda(x) t}\int_{\R_+}v_x^1(t,\alpha)d\alpha$ converges to  a positive limit. Since $\lambda(y)>\lambda(x)$ it comes that $e^{-\lambda(y) t}\int_{\R_+}v_x^1(t,\alpha)d\alpha$ converges to $0$ as $t\rightarrow+\infty$. We deduce that
\begin{equation*}
\frac{\rho_y^1(t)}{\rho_x^1(t)}=\frac{e^{-\lambda(y) t}\int_{\R_+}v_y^1(t,\alpha)d\alpha}{e^{-\lambda(y) t}\int_{\R_+}v_x^1(t,\alpha)d\alpha}\rightarrow +\infty.
\end{equation*}
Since $\rho_y^1(t)$ is bounded we deduce that $\rho_x^1(t)\rightarrow 0$ and similarly that $\rho_x^2(t)\rightarrow 0$. Then the population with trait $x$ becomes extinct. Using similar arguments as in the previous proof we obtain that $n_y(t,.)\rightarrow \overline{n}_y$ in $L^1(\R_+)^2$ which allows us to conclude.
\end{proof}
\begin{biol}
Equation (\ref{eq:bimo}) describes a competition dynamics between two infinite monomorphic populations with trait $x$ and $y$. Proposition \ref{pr:bimoedp} shows that if $x_b\wedge x_d < y_b\wedge y_d$, then the population with trait $y$ invades and becomes fixed while  the population with trait $x$ becomes extinct. That  gives us an invasion-implies-fixation criterion.
\end{biol}
\section{Adaptive dynamics analysis}
In this section, we study the model introduced in Section 2 under the different scaling of the adaptive dynamics. We generalise the Trait Substitution Sequence with age structure (\cite{meleard2009trait}) to take into account the Lansing Effect. Then we study the behaviour of the TSS on a large time-scale when mutations are small. We show that the limiting behaviour of the TSS is described by  a differential inclusion which generalises the canonical equation of adaptive dynamics (\cite{dieckmann1996dynamical}, \cite{champagnat2002canonical}, \cite{champagnat2011polymorphic}) to non-regular fitness  functions. We first state some properties of the demographic parameters and  introduce the invasion fitness function.
\subsection{Malthusian parameter and invasion fitness}
Let us introduce the following sets: $U_1=\lbrace x\in\mathcal{V}:x_b<x_d\rbrace$, $U_2=\lbrace x\in\mathcal{V}:x_d<x_b\rbrace$ and $\mathcal{H}=\lbrace x\in\mathcal{V}:x_b=x_d\rbrace$.
\begin{figure}[h!]
\begin{center}
\includegraphics[scale=0.2]{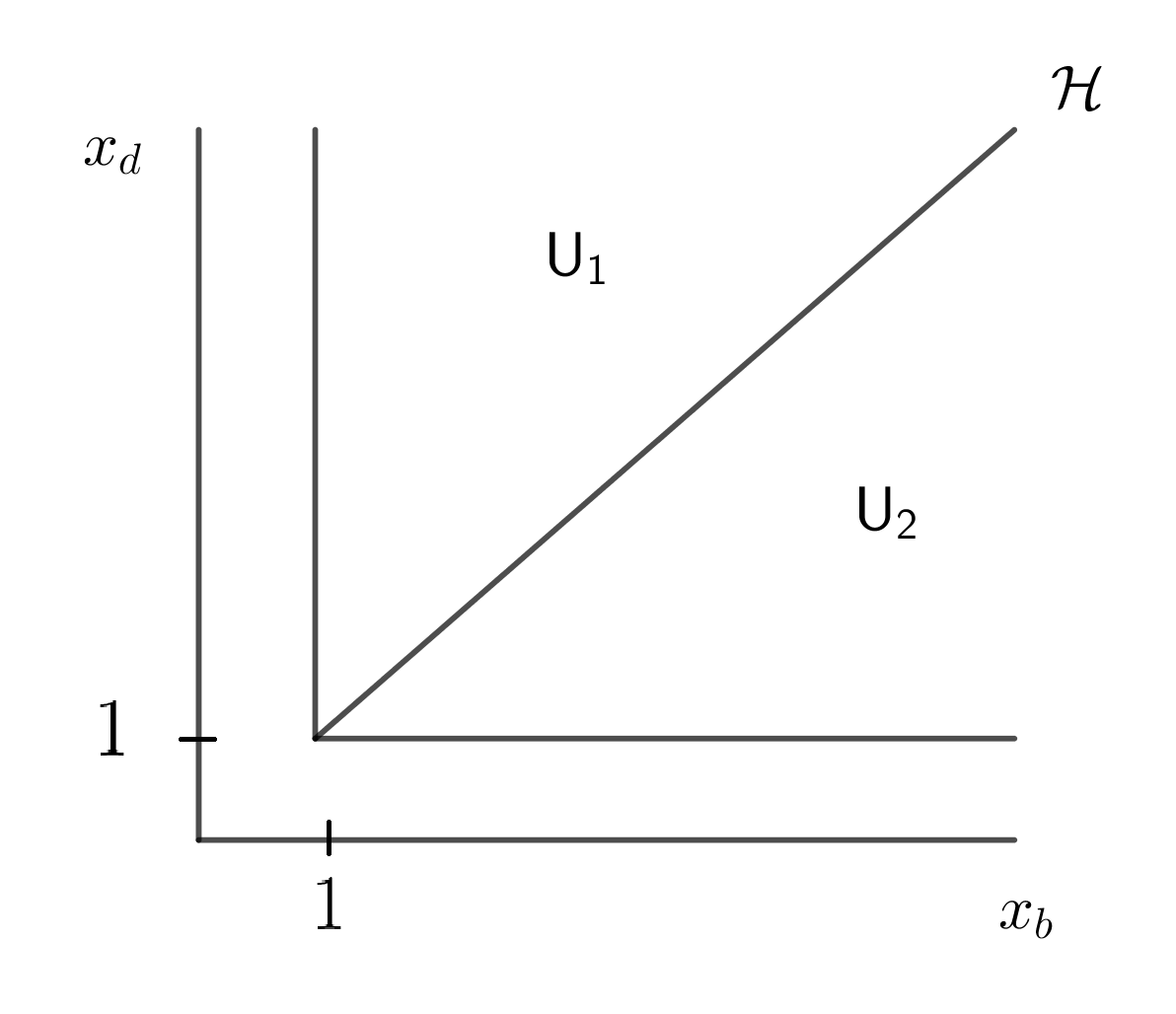}
\caption{Picture of $\mathcal{V}=U_1\cup \mathcal{H}\cup U_2$.}
\label{fig:dessinV}
\end{center}
\end{figure}
\subsubsection{Malthusian parameter}
We now give some properties of the Malthusian parameter $\lambda(x)$ defined in (\ref{eq:malthus}). 
\begin{prop}\label{pr:proprietesmalthus}
\begin{itemize}
\item[(i)]For all $x\in\mathcal{V}$,  $0\leq \lambda(x)<1$.
\item[(ii)] The map $x\in\mathcal{V}\mapsto \lambda(x)$ is continuous. It is differentiable on $U_1\cup U_2$ and satisfies
\begin{align}\label{eq:gradmalthus}
&\forall x\in U_1,\quad \nabla \lambda(x)= \left(\frac{e^{-\lambda(x) x_b}}{G(x)}, 0\right)\\\nonumber
&\forall x\in U_2,\quad \nabla \lambda(x)= \left(0,\frac{e^{-\lambda(x)x_d}}{G(x)}\right)
\end{align}
where $G(x)=\int_{0}^{x_b\wedge x_d}a e^{-\lambda(x) a}da$. 
\item[(iii)]We have $\sup_{x\in U_1\cup U_2}\Vert \nabla \lambda(x)\Vert < +\infty$. Moreover, for all $i\in\lbrace 1,2\rbrace$, the fitness gradient $\nabla \lambda$ is Lipschitz on $U_i$.
\end{itemize}
\end{prop}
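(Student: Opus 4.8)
The plan is to exploit that $\lambda(x)$ depends on $x$ only through $m:=x_b\wedge x_d$, and to study the scalar function $m\mapsto\lambda$ implicitly defined on $\{m>1\}$ by $\phi(\lambda,m):=\int_0^m e^{-\lambda a}\,da-1=0$. \textbf{Part (i):} for fixed $m>1$ the map $\lambda\mapsto\phi(\lambda,m)$ is continuous and strictly decreasing (derivative $-\int_0^m ae^{-\lambda a}\,da<0$), with $\phi(0,m)=m-1>0$, $\phi(1,m)=-e^{-m}<0$ and $\phi(\lambda,m)\to-1$ as $\lambda\to\infty$; hence there is a unique root and it lies in $(0,1)$, so $0\le\lambda(x)<1$.

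\textbf{Part (ii):} since $\phi$ is jointly $C^\infty$ on $\R\times(0,\infty)$ and $\partial_\lambda\phi(\lambda(x),m)=-G(x)\neq0$, the implicit function theorem gives that $m\mapsto\lambda$ is $C^\infty$ on $(1,\infty)$, and composing with the continuous map $x\mapsto x_b\wedge x_d$ yields continuity of $\lambda$ on $\mathcal V$. On $U_1$ one has $x_b\wedge x_d=x_b$, which is smooth in $x$ with $\partial_{x_d}x_b=0$, so $\lambda$ is differentiable there with $\partial_{x_d}\lambda=0$; differentiating $\int_0^{x_b}e^{-\lambda a}\,da=1$ in $x_b$ gives $e^{-\lambda x_b}-G(x)\,\partial_{x_b}\lambda=0$, hence the claimed gradient. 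The case of $U_2$ is symmetric.

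\textbf{Part (iii):} for the uniform bound, on $U_1$ we have $\|\nabla\lambda(x)\|=e^{-\lambda(x)x_b}/G(x)\le 1/G(x)$, and since $\lambda(x)<1$ and $x_b\wedge x_d>1$,
\[
G(x)=\int_0^{x_b\wedge x_d}ae^{-\lambda(x)a}\,da\ \ge\ \int_0^1 ae^{-a}\,da=1-2e^{-1}>0,
\]
and likewise on $U_2$, so $\sup_{U_1\cup U_2}\|\nabla\lambda\|\le(1-2e^{-1})^{-1}$. For the Lipschitz property, since $x\mapsto(x_b,x_d)$ is $1$-Lipschitz it suffices to show $\ell(m):=e^{-\lambda(m)m}/G(m)$ is Lipschitz on $(1,\infty)$, because $\nabla\lambda=(\ell(x_b),0)$ on $U_1$ and $(0,\ell(x_d))$ on $U_2$. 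As $\ell$ is $C^\infty$ I would bound $\ell'$, using the two identities that follow at once from $\int_0^m e^{-\lambda a}\,da=1$, namely $e^{-\lambda m}=1-\lambda$ and $\lambda\,G(m)=1-me^{-\lambda m}$ (so $me^{-\lambda m}\in(0,1)$), together with $\lambda'(m)=\ell(m)$, $G'(m)=me^{-\lambda m}-\lambda'(m)\int_0^m a^2e^{-\lambda a}\,da$ and the elementary bound $\int_0^m a^2e^{-\lambda a}\,da\le m\,G(m)$. A short computation then reduces this to
\[
\ell'(m)=\frac{e^{-\lambda m}}{G(m)^2}\Big(-1-me^{-\lambda m}+e^{-\lambda m}\,\tfrac{\int_0^m a^2e^{-\lambda a}\,da}{G(m)}\Big),
\]
whose three inner terms are bounded by $1$, $me^{-\lambda m}<1$ and $e^{-\lambda m}\int_0^m a^2e^{-\lambda a}\,da/G(m)\le me^{-\lambda m}<1$ respectively, while $e^{-\lambda m}/G(m)^2\le(1-2e^{-1})^{-2}$; hence $|\ell'|\le 3(1-2e^{-1})^{-2}$.

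\textbf{The main obstacle} is the Lipschitz estimate of Part (iii): in both degenerate regimes — near $\mathcal H$, where $m\downarrow1$ and $\lambda\downarrow0$, and as $m\to\infty$, where $\lambda\uparrow1$ — the quantities $\lambda$, $1-\lambda$, $G$ and $me^{-\lambda m}$ all degenerate, and the whole point of the identities $e^{-\lambda m}=1-\lambda$ and $\lambda G=1-me^{-\lambda m}$ is that these degeneracies cancel in the formula for $\ell'$, leaving it bounded. One should also note that $\nabla\lambda$ does \emph{not} extend continuously across $\mathcal H$ — its limits there are $(2,0)$ from $U_1$ and $(0,2)$ from $U_2$, since $\ell(1^+)=2$ — which is why the Lipschitz bound can only be asserted on each $U_i$ separately, and it is precisely this irregularity that later forces the passage to a differential inclusion.
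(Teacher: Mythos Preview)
Your proof is correct and follows essentially the same approach as the paper: both use the implicit function theorem for (ii) and the lower bound $G(x)\ge\int_0^1 ae^{-a}\,da$ for (iii). Your treatment of the Lipschitz estimate in (iii) is actually more thorough than the paper's, which simply asserts that $\nabla\lambda$ ``has bounded derivatives on $U_i$'' because $G$ is bounded below, without checking the numerator; your explicit computation of $\ell'(m)$ via the identities $e^{-\lambda m}=1-\lambda$ and $\lambda G=1-me^{-\lambda m}$ fills this gap and makes the uniform bound genuinely transparent in both regimes $m\downarrow 1$ and $m\to\infty$.
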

\begin{rema}
Note that the Malthusian parameter $\lambda(x)$ is not differentiable on the diagonal $\mathcal{H}$, which can be easily obtained by computing left and right partial derivatives on $\mathcal{H}$.
\end{rema}
\begin{nota}
For all $h\in \R$ we define $(h)_1 = \left(\begin{array}{c}
h \\ 
0
\end{array}\right) $ and $(h)_2=\left(\begin{array}{c}
0 \\ 
h
\end{array}\right)$.
\end{nota}
\begin{proof}
(i) Since $x_b\wedge x_d >1$ we obtain from the definition (\ref{eq:malthus}) that $\lambda(x)> 0$. Assume that $\lambda(x)\geq 1$. Then we obtain that $1=\int_{0}^{x_b\wedge x_d}e^{-\lambda(x) a}da \leq 1 - e^{-x_b\wedge x_d}$ which is a contradiction.
\\
(ii) We prove the continuity. Let $x\in\mathcal{V}$ and let $(x_n)$ be a sequence of $\mathcal{V}$ such that $x^n\rightarrow x$. By (i), $\lambda$ is bounded and we can extract a subsequence (still denoted $x^n$ by simplicity)  such that $\lambda(x^n)\rightarrow \lambda^{*}$. We deduce that $1 = \int_{0}^{x^n_b\wedge x^n_d}e^{-\lambda(x^n)a}da\rightarrow  \int_{0}^{x_b\wedge x_d}e^{-\lambda^{*}a}da=1$ which allows us to conclude. Differentiability properties are a direct consequence of the Implicit Function Theorem. For each $i\in\lbrace 1,2\rbrace$ we apply implicit function Theorem to the map
\begin{equation*}
(\lambda,x)\in \R\times U_i\mapsto \int_{0}^{x_b\wedge x_d}e^{-\lambda a}da - 1.
\end{equation*}
We deduce that $\lambda$ is differentiable over $U_1\cup U_2$ and that
\begin{equation*}
\forall i\in\lbrace 1,2\rbrace,\quad \forall x \in U_i,\quad \nabla \lambda(x)= \left(\frac{e^{-\lambda(x) (x_b\wedge x_d)}}{G(x)}\right)_i.
\end{equation*}
(iii) It is straightforward to check that for all $x\in\mathcal{V}$,
\begin{equation*}
\frac{e^{-\lambda(x)(x_b\wedge x_d)}}{G(x)}\leq \frac{1}{\int_{0}^{1}a e^{-a}da}
\end{equation*}
which allows us to obtain that $\sup_{x\in U_1\cup U_2}\Vert \nabla \lambda(x)\Vert < +\infty$. Moreover, the gradient $\nabla \lambda$ is obviously differentiable on $U_i$. Since $G$ is bounded below by $\int_{0}^{1}ae^{-a}da$, we deduce that $\nabla \lambda$ has bounded derivatives on $U_i$ and that $\nabla \lambda$ is Lipschitz on $U_i$.
\end{proof}
\begin{rema}\label{remacaswell2}
Formulae (\ref{eq:gradmalthus}) describe the sensitivity of the Malthusian parameter to small variations of the trait $x$ as well as the strength of selection at ages $x_b$ and $x_d$ for a population with Lansing effect. The quantity $G(x)$ can be interpreted as the mean generation time associated with the trait $x$. Moreover (\ref{eq:gradmalthus}) and Proposition \ref{le:linearlong} yield
\begin{equation}\label{eq:gradfitstable}
\forall i\in\lbrace 1,2\rbrace,\quad \forall x \in U_i,\quad \nabla \lambda(x)= \left(\frac{N_x^1(x_b\wedge x_d)}{G(x)}\right)_i
\end{equation}
where $N_x^1$ is the stable age distribution. In \cite{caswell2010reproductive}, Caswell obtains similar formulae for derivatives of the Malthusian parameter with respect to some little perturbations on the intensity of birth or death at some given age while our formulae are obtained considering a small perturbation on the duration of the reproduction phase (not on the intensity which remains constant equal to one).
\end{rema}
The following proposition recalls a simple link between the Malthusian parameter and the stationary state of the monomorphic partial differential equation (\ref{eq:edpmono}).
\begin{prop}
\begin{itemize}
\item[(i)] For all $x\in\mathcal{V}$, we have $\lambda(x) = \eta\,\Vert \overline{n}_x\Vert_1$. \item[(ii)] The map $x\in \mathcal{V}\mapsto \overline{n}_x(0)$ is continuous and bounded.
\end{itemize}
\end{prop}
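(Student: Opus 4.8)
The plan is to deduce both assertions from the explicit description of $\overline{n}_x$ given in Proposition \ref{pr:explicitstatio} and from the continuity and bounds $0<\lambda(x)<1$ supplied by Proposition \ref{pr:proprietesmalthus}. Assertion (i) is essentially already contained in the proof of Proposition \ref{prop:monoedp}: any non-trivial non-negative stationary solution $\overline{n}$ of (\ref{eq:edpmono}), after moving the transport term to the other side, is a non-negative eigenvector of the operator $A$ for the eigenvalue $\eta\Vert\overline{n}\Vert_1$, and by Proposition \ref{le:linearlong} the only eigenvalue of $A$ carrying a non-negative eigenvector is the principal one $\lambda(x)$; hence $\eta\Vert\overline{n}_x\Vert_1=\lambda(x)$. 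The same identity also drops out of (\ref{eq:satiomass}) by summation: $\Vert\overline{n}_x\Vert_1=\overline{\rho}^1_x+\overline{\rho}^2_x=\overline{\rho}^1_x\bigl(1+\tfrac{a_{21}}{\lambda(x)-a_{22}}\bigr)=\lambda(x)/\eta$.

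For (ii), I would first evaluate the formulas of Proposition \ref{pr:explicitstatio} at $a=0$. Since $N_x^1(0)=1$, $N_x^2(0)=\tfrac{[\textbf{F}(\lambda(x))]_{21}}{1-[\textbf{F}(\lambda(x))]_{22}}$ and $\int_{\R_+}N_x^2=\tfrac{[\textbf{F}(\lambda(x))]_{21}}{(1-[\textbf{F}(\lambda(x))]_{22})(1+\lambda(x))}$, one obtains $\overline{n}^1_x(0)=\tfrac{\overline{\rho}^1_x}{\int_{\R_+}N_x^1(\alpha)\,d\alpha}$ and $\overline{n}^2_x(0)=(1+\lambda(x))\,\overline{\rho}^2_x$, with the convention $\overline{n}^2_x(0)=0$ on $U_1\cup\mathcal{H}$, where $N_x^2\equiv0$ and $\overline{\rho}^2_x=0$. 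Next I would write out, using (\ref{eq:defmathF}) and the formula for $N_x$, the elementary closed forms of all the ingredients as functions of $(x_b,x_d,\lambda(x))$ alone: for $x\in U_2$ one has $a_{22}=-e^{-(1+\lambda(x))x_b}$, $a_{21}=\tfrac{e^{-\lambda(x)x_d}-e^{x_d-(1+\lambda(x))x_b}}{(1+\lambda(x))\int_{\R_+}N_x^1}$ and $\int_{\R_+}N_x^1=\tfrac{1-e^{-\lambda(x)x_d}}{\lambda(x)}+\tfrac{e^{-\lambda(x)x_d}}{1+\lambda(x)}$, together with analogous expressions for $[\textbf{F}(\lambda(x))]_{21}$ and $[\textbf{F}(\lambda(x))]_{22}$, while for $x\in U_1\cup\mathcal{H}$ one has $a_{21}=0$ and $\overline{\rho}^1_x=\lambda(x)/\eta$. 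Since $\lambda$ is continuous on $\mathcal{V}$, each of these expressions is continuous on $U_1$ and on $U_2$; continuity across the diagonal $\mathcal{H}$ follows because $[\textbf{F}(\lambda(x))]_{21}\to0$ as $x\to\mathcal{H}$ from within $U_2$ (its numerator vanishes when $x_b=x_d$), so that $a_{21}\to0$, $\overline{\rho}^2_x\to0$, $\overline{\rho}^1_x\to\lambda(x)/\eta$ and $N_x^2(0)\to0$, which matches the values imposed on $U_1\cup\mathcal{H}$. This yields continuity of $x\mapsto\overline{n}_x(0)$ on $\mathcal{V}$.

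Boundedness is then immediate: $0<\lambda(x)<1$, and since $x_d>1$ and $\lambda(x)<1$ one gets $\int_{\R_+}N_x^1(\alpha)\,d\alpha\ge\int_0^1 e^{-\alpha}\,d\alpha=1-e^{-1}$; moreover (\ref{eq:satiomass}), together with $a_{21}\ge0$ and $a_{22}<0$, gives $\overline{\rho}^1_x\le\lambda(x)/\eta$ and $\overline{\rho}^2_x=\tfrac{\lambda(x)}{\eta}\cdot\tfrac{a_{21}}{\lambda(x)-a_{22}+a_{21}}\le\lambda(x)/\eta$, whence $\overline{n}^1_x(0)\le\tfrac{1}{\eta(1-e^{-1})}$ and $\overline{n}^2_x(0)=(1+\lambda(x))\,\overline{\rho}^2_x\le 2/\eta$ uniformly on $\mathcal{V}$. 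The step I expect to need the most care is the passage across $\mathcal{H}$: there $\overline{n}^2_x$ is defined through a $0/0$ quotient, and the coefficient matrices $\textbf{B}_x,\textbf{D}_x$ entering (\ref{eq:edpmono}) are themselves discontinuous in $x$, so one cannot simply invoke perturbation of eigenvectors. The key point is that $\overline{n}_x(0)$ depends on $x$ only through $\lambda(x)$ and through the integrals $\textbf{F}(\lambda(x))$ and $\int_{\R_+}N_x^i$, which are integrals of (possibly discontinuous) integrands against Lebesgue measure and hence depend continuously on the endpoints $x_b,x_d$; carrying out the explicit $U_2\to\mathcal{H}$ limit, as above, is what closes the argument, and everything else is routine computation with elementary integrals.
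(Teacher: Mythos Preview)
Your argument is correct. For (i) you recover exactly the paper's observation that $\overline{n}_x$ is a non-negative eigenvector of $A$ with eigenvalue $\eta\Vert\overline{n}_x\Vert_1$, hence this eigenvalue must be $\lambda(x)$; your alternative check via summing (\ref{eq:satiomass}) is also fine.

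For (ii) you take a somewhat different route from the paper. You start from Proposition~\ref{pr:explicitstatio}, evaluate at $a=0$, and then write out all the ingredients $a_{21}$, $a_{22}$, $\int N_x^i$, $[\textbf{F}(\lambda(x))]_{21}$, $[\textbf{F}(\lambda(x))]_{22}$ as explicit elementary functions of $(x_b,x_d,\lambda(x))$, treating $U_1\cup\mathcal{H}$ and $U_2$ separately and checking by hand that the $U_2$-formulas match those on $U_1\cup\mathcal{H}$ as $x\to\mathcal{H}$. The paper instead avoids any case split: it combines (i), which gives $\lambda(x)=\eta\bigl(\overline{n}_x^1(0)u_1(x)+\overline{n}_x^2(0)u_2(x)\bigr)$ with $u_i(x)=\int_{\R_+}\exp(-\int_0^a[\textbf{D}_x]_{ii}-\lambda(x)a)\,da$, together with the eigenvector relation $\overline{n}_x(0)=\textbf{F}(\lambda(x))\overline{n}_x(0)$, to obtain the single closed formula
\[
\overline{n}_x^1(0)=\frac{\lambda(x)}{\eta}\,\Bigl(u_1(x)+u_2(x)\,\tfrac{[\textbf{F}(\lambda(x))]_{21}}{1-[\textbf{F}(\lambda(x))]_{22}}\Bigr)^{-1},
\]
valid uniformly on $\mathcal{V}$; continuity then follows at once because $u_1,u_2$ and the entries of $\textbf{F}(\lambda(x))$ are integrals whose endpoints and integrands depend continuously on $x$, with $1-[\textbf{F}(\lambda(x))]_{22}$ bounded away from zero. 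The second component is recovered from the same eigenvector relation as $\overline{n}_x^2(0)=\tfrac{[\textbf{F}(\lambda(x))]_{21}}{1-[\textbf{F}(\lambda(x))]_{22}}\,\overline{n}_x^1(0)$. For boundedness the paper simply notes $\overline{n}_x^1(0)\le\Vert\overline{n}_x\Vert_1=\lambda(x)/\eta$ (which amounts to $\int_{\R_+}N_x^1\ge 1$, a consequence of the definition of $\lambda(x)$), slightly sharper than your bound via $\int_0^1 e^{-a}\,da$. In short: your approach works and makes the diagonal limit very explicit, while the paper's approach trades the case analysis for a structural identity that handles all of $\mathcal{V}$ at once.
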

\begin{proof}
(i)  has been proved at the beginning of the proof of Proposition \ref{prop:monoedp}.
\\
(ii) By (i) we have 
\begin{equation}\label{eq:prop4.51}
\lambda(x) =\eta\left( \overline{n}_x^1(0)u_1(x) +\overline{n}_x^2(0)u_2(x)\right)
\end{equation}
where
\begin{align*}
u_1(x)&=\int_{0}^{+\infty}\exp\left(-\int_0^a\mathds{1}_{\alpha>x_d}d\alpha -\lambda(x) a\right)da\ ;\ 
u_2(x)=\int_{0}^{+\infty}\exp\left(-(1+\lambda(x))a\right)da.
\end{align*}
Moreover $\overline{n}_x(0)$ is a solution of  
\begin{equation}\label{eq:prop4.52}
\overline{n}_x(0)=\textbf{F}(\lambda(x))\,\overline{n}_x(0)
\end{equation}
where $\textbf{F}$ is defined in (\ref{eq:defmathF}). From (\ref{eq:prop4.51}) and (\ref{eq:prop4.52}) we obtain by simple computation that
\begin{equation*}
\overline{n}_x^1(0)=\frac{\lambda(x)}{\eta}\frac{1}{u_1(x) + u_2(x)\frac{\left[\textbf{F}(\lambda(x))\right]_{21}}{1-\left[\textbf{F}(\lambda(x))\right]_{22}}}
\end{equation*}
which is a continuous function of $x$. Boundedness is obvious arguing that $\overline{n}_x^1(0)\leq \Vert \overline{n}_x\Vert_1$.
\end{proof}
\subsubsection{Invasion fitness}
We extend the definition of the invasion fitness for age-structured populations introduced in \cite[Section 3]{meleard2009trait} to take into account the Lansing effect. 
\begin{defi}
For all $y\in\R_+^{*}$ and $x\in\mathcal{V}$, the invasion fitness $1-z(y,x)$ is defined as the survival probability of a bi-type age structured branching process with birth rates and death rates defined in \eqref{eq:birthdeathrate}, respectively equal to
\begin{equation*}
\textbf{B}_y(a)\quad \text{ and }\quad \textbf{D}_y(a) +\eta\,\Vert \overline{n}_x\Vert \textbf{I}.
\end{equation*}
\end{defi}
The next proposition gives a precise and precious relation between the invasion fitness and the Malthusian parameter.
\begin{prop}\label{pr:invasionfitness}
Let $y\in\R_+^{*}$ and $x\in\mathcal{V}$, then the invasion fitness satisfies
\begin{equation}\label{eq:invfit}
1-z(y,x)=(\lambda(y) - \lambda(x))\vee 0.
\end{equation}
\end{prop}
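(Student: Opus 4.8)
The plan is to compute the extinction probability $z(y,x)$ explicitly, by reducing the two-type process to a single-type age-dependent branching process whose offspring generating function can be written down, and then identifying the relevant fixed point with the help of the equation (\ref{eq:malthus}) defining $\lambda$.

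First, by the preceding proposition $\eta\,\Vert\overline{n}_x\Vert=\lambda(x)$, so the branching process has birth rates $\textbf{B}_y(a)$ and death rates $\textbf{D}_y(a)+\lambda(x)\textbf{I}$, and it is started from a single newborn mutant, i.e.\ one individual of type $1$ (trait $(y_b,y_d)$). Type-$2$ individuals (trait $(y_b,0)$) never beget type-$1$ individuals, and the pure type-$2$ process is strictly subcritical, its mean offspring number being $\int_0^{y_b}e^{-(1+\lambda(x))a}\,da<1$ because $\lambda(x)>0$; so it dies out almost surely, each type-$2$ founder generating an a.s.\ finite subtree. Since type-$1$ individuals are produced only by type-$1$ individuals, survival of the whole process is a.s.\ equivalent to survival of the type-$1$ lineages, and one may ignore the type-$2$ bookkeeping entirely.

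Now describe the type-$1$ sub-process, with $L:=y_b\wedge y_d$ and $\mu:=\lambda(x)>0$: a type-$1$ individual begets type-$1$ children at rate $\mathds{1}_{a\le L}$ (entry $[\textbf{B}_y]_{11}$) and, on $[0,L]$ where $a\le y_d$, dies at the constant rate $\mu$ (entry $[\textbf{D}_y]_{11}+\lambda(x)$), while after age $L$ it produces no further type-$1$ child. Hence, conditionally on its reproductive span $S=\min(T,L)$ with $T\sim\mathrm{Exp}(\mu)$, its number $N$ of type-$1$ children is $\mathrm{Poisson}(S)$, and with $\beta:=1-s$,
\begin{equation*}
\phi(s):=\mathbf{E}[s^{N}]=\mathbf{E}[e^{-\beta S}]=\frac{\mu}{\mu+\beta}+\frac{\beta}{\mu+\beta}\,e^{-(\mu+\beta)L},
\end{equation*}
using $\mathbf{P}(S=L)=e^{-\mu L}$ and that $S$ has density $\mu e^{-\mu t}$ on $(0,L)$. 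Because the death rate is everywhere $\ge\mu>0$, every individual dies a.s.\ and there is no explosion, so $z(y,x)$ equals the smallest root in $[0,1]$ of $\phi(s)=s$, i.e.\ the extinction probability of the embedded Galton--Watson process. Clearing denominators, $\phi(s)=1-\beta$ becomes $\beta(1-\mu-\beta)=\beta\,e^{-(\mu+\beta)L}$, so either $\beta=0$ ($s=1$) or, writing $\gamma:=\mu+\beta$, $e^{-\gamma L}=1-\gamma$ — which is precisely the equation $\int_0^{L}e^{-\gamma a}da=1$ defining $\lambda(y)$. The map $\gamma\mapsto e^{-\gamma L}-1+\gamma$ being strictly convex and zero at $\gamma=0$, its only possible positive zero is $\gamma=\lambda(y)$; since $\gamma=\mu+\beta\ge\mu>0$ this forces $\beta=\lambda(y)-\lambda(x)$, which lies in $(0,1)$ exactly when $\lambda(x)<\lambda(y)$ (note $\lambda(y)>\lambda(x)>0$ entails $y\in\mathcal{V}$, hence $\lambda(y)<1$ by Proposition~\ref{pr:proprietesmalthus}(i)). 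Therefore $z(y,x)=1$ if $\lambda(y)\le\lambda(x)$, and $z(y,x)=1-(\lambda(y)-\lambda(x))$ if $\lambda(y)>\lambda(x)$; in both cases $1-z(y,x)=(\lambda(y)-\lambda(x))\vee0$, which is (\ref{eq:invfit}).

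The step I expect to require the most care is the reduction to the type-$1$ sub-process: one has to justify that, outside a null set, survival of the two-type branching process is equivalent to survival of the type-$1$ lineages — on the event that type-$1$ becomes extinct, only finitely many type-$2$ founders are ever born, each producing a finite subtree — together with the standard but necessary check that bounded per-capita rates rule out explosion, so that the continuous-time extinction event coincides with extinction of the embedded Galton--Watson chain.
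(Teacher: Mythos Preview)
Your proof is correct and follows the same strategy as the paper's: reduce to the type-$1$ subprocess (the type-$2$ population being subcritical), compute the offspring generating function, and identify its smallest fixed point via the defining equation~(\ref{eq:malthus}) for $\lambda(y)$. The only difference is cosmetic: you obtain $\phi(s)$ directly from the observation that the number of type-$1$ children is $\mathrm{Poisson}(\min(T,L))$, whereas the paper integrates the standard age-structured formula $F(z)=\int_{\R_+} e^{(z-1)\int_0^a [\textbf{B}_y]_{11}}\,([\textbf{D}_y]_{11}+\lambda(x))\,e^{-\int_0^a ([\textbf{D}_y]_{11}+\lambda(x))}\,da$ over the three age-intervals $[0,y_b\wedge y_d]$, $[y_b\wedge y_d,y_d]$, $[y_d,\infty)$ before arriving at the equivalent fixed-point equation $(z-1)=(z-1)\int_0^{y_b\wedge y_d}e^{(z-1-\lambda(x))a}\,da$.
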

\begin{proof}
Let $Z_t(da)=(Z^1_t(da), Z_t^2(da))$ be an age- structured branching process with birth rates $\textbf{B}_y(a)$ and death rates $\textbf{D}_y(a) + \eta\,\Vert \overline{n}_x\Vert \textbf{I}= \textbf{D}_y(a) +\lambda(x) \textbf{I}$. The process $Z_t$ becomes extinct if and only if the process $Z_t^1$ becomes extinct. Indeed, if $Z_0^1 = 0$, the process $Z_t^2$ evolves as a sub-critical branching process. The process $Z_t^1$ is an age structured branching process with birth rates and death rates respectively
\begin{equation*}
\left[\textbf{B}_y(a)\right]_{11} = \mathds{1}_{a\leq y_b\wedge y_d}\quad\text{ and } \left[\textbf{D}_y(a)\right]_{11}+\lambda(x) = \mathds{1}_{a>y_d}+\lambda(x).
\end{equation*}
We deduce that $z(y,x)$ equals the smallest solution of the equation $z = F(z)$ where
\begin{align*}
F(z)&=\int_{\R_+}e^{(z-1)\int_{0}^{a}\left[\textbf{B}_y(\alpha)\right]_{11}d\alpha}\left(\left[\textbf{D}_y(a)\right]_{11}+\lambda(x)\right) e^{-\int_{0}^{a}\left(\left[\textbf{D}_y(\alpha)\right]_{11}+\lambda(x)\right)d\alpha}da\\
& =\int_{0}^{y_b\wedge y_d}\lambda(x) e^{(z-1-\lambda(x))a}da + e^{(z-1)(y_b\wedge y_d)} \int_{y_b\wedge y_d}^{y_d}\lambda(x) e^{-\lambda(x) a}da\\
& +e^{(z-1)(y_b\wedge y_d)}(1+\lambda(x))\int_{y_d}^{+\infty}e^{-(a-y_d)-\lambda(x) a}da\\
&=1 - e^{(z-1-\lambda(x))(y_b\wedge y_d)}+(z-1)\int_{0}^{y_b\wedge y_d}e^{(z-1-\lambda(x))a}da\\
&+e^{(z-1)y_b\wedge y_d}(e^{-\lambda(x) (y_b\wedge y_d)} -e^{-\lambda(x) y_d})+e^{(z-1)(y_b\wedge y_d)} +e^{(z-1)(y_b\wedge y_d)} e^{-\lambda(x) y_d}\\
&= 1 +(z-1)\int_{0}^{y_b\wedge y_d} e^{(z-1-\lambda(x))a} da.
\end{align*}
We have obtained that the equation $z=F(z)$ is equivalent to 
\begin{equation}\label{eq:profix}
z-1 = (z-1)\int_{0}^{y_b\wedge y_d} e^{(z-1-\lambda(x))a}da.
\end{equation}
If $\lambda(y)>\lambda(x)$, Equation (\ref{eq:profix}) admits two solutions $z=1$ and $z=\lambda(x) -\lambda(y) +1<1$. If $\lambda(y)<\lambda(x)$, Equation (\ref{eq:profix}) admits a  unique solution $z=1$. That allows us to conclude the proof.
\end{proof}
\begin{rema}
It is interesting to note that in our model, the invasion fitness (which is a concept from adaptive dynamics theory) and the Malthusian parameter are connected thanks to the simple relation (\ref{eq:invfit}).
\end{rema}
The following proposition characterises the set of traits $y$ which can invade some given trait $x$.
\begin{prop}\label{pr:invasionset}
For all $x\in\mathcal{V}$, $y\in\R_+^2$, 
\begin{equation*}
1-z(y,x)>0 \Longleftrightarrow \lambda(y)>\lambda(x)\Longleftrightarrow  y_b\wedge y_d > x_b\wedge x_d.
\end{equation*}
\end{prop}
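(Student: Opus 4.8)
The plan is to establish the two equivalences separately: the first is an immediate consequence of Proposition~\ref{pr:invasionfitness}, and the second is a monotonicity property of the Malthusian parameter viewed as a function of $x_b\wedge x_d$.

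For the equivalence $1-z(y,x)>0\Longleftrightarrow\lambda(y)>\lambda(x)$, I would simply invoke Proposition~\ref{pr:invasionfitness}: since $1-z(y,x)=(\lambda(y)-\lambda(x))\vee 0$, this quantity is strictly positive exactly when $\lambda(y)-\lambda(x)>0$, i.e.\ when $\lambda(y)>\lambda(x)$. No further work is needed here.

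For the equivalence $\lambda(y)>\lambda(x)\Longleftrightarrow y_b\wedge y_d>x_b\wedge x_d$, the key observation is that $\lambda(y)$ depends on $y$ only through $s:=y_b\wedge y_d$, being the unique real root of $\Phi(\lambda,s)=1$ where $\Phi(\lambda,s):=\int_{0}^{s}e^{-\lambda a}\,da$. I would note that for every $s>0$ the map $\Phi(\cdot,s)\colon\R\to(0,\infty)$ is a strictly decreasing bijection (with derivative $-\int_0^s a e^{-\lambda a}\,da<0$), so the root $\lambda_s$ is always well-defined, while $\Phi(\lambda,\cdot)$ is strictly increasing in $s$ (with derivative $e^{-\lambda s}>0$). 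Then, for $s_1<s_2$, from $1=\Phi(\lambda_{s_1},s_1)$ and $1=\Phi(\lambda_{s_2},s_2)>\Phi(\lambda_{s_2},s_1)$ one obtains $\Phi(\lambda_{s_1},s_1)>\Phi(\lambda_{s_2},s_1)$, and strict monotonicity of $\Phi(\cdot,s_1)$ forces $\lambda_{s_1}<\lambda_{s_2}$. Hence $s\mapsto\lambda_s$ is strictly increasing, which yields $\lambda(y)>\lambda(x)\Leftrightarrow y_b\wedge y_d>x_b\wedge x_d$ (a strictly increasing map $f$ satisfies $f(a)>f(b)\Leftrightarrow a>b$). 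Chaining the two equivalences completes the proof.

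I do not expect any genuine obstacle; the only mild point is that $\lambda(y)$ must be read off from (\ref{eq:malthus}) for every $y$ with positive coordinates, not only for $y\in\mathcal{V}$, so that the monotonicity argument above also covers the degenerate case $y\notin\mathcal{V}$ (where $\lambda(y)\le 0<\lambda(x)$ by Proposition~\ref{pr:proprietesmalthus}(i), and correspondingly $y_b\wedge y_d\le 1<x_b\wedge x_d$, so that all three conditions fail simultaneously).
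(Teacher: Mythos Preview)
Your proof is correct and follows essentially the same approach as the paper: both arguments exploit the strict monotonicity of $\Phi(\lambda,s)=\int_0^s e^{-\lambda a}\,da$ in each variable, combined with the defining relation $\Phi(\lambda(x),x_b\wedge x_d)=1$. The paper phrases the second equivalence more tersely via the intermediate inequality $\int_0^{x_b\wedge x_d}e^{-\lambda(y)a}\,da<1$, whereas you abstract this into the statement that $s\mapsto\lambda_s$ is strictly increasing; your additional remark on the case $y\notin\mathcal{V}$ is a useful clarification that the paper leaves implicit.
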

\begin{proof}
From the definition (\ref{eq:malthus}) of the Malthusian parameter, we easily deduce the following equivalences:
\begin{align*}
(\lambda(y)-\lambda(x))\vee 0>0 & \Longleftrightarrow \lambda(y)-\lambda(x)>0\\
&\Longleftrightarrow \int_{0}^{x_b\wedge x_d}e^{-\lambda(y)a}da <1 \\
&\Longleftrightarrow x_b\wedge x_d<y_b\wedge y_d, 
\end{align*}
which concludes the proof.
\end{proof}
\subsection{Trait Substitution Sequence with age structure}
We first generalise the definition of the  Trait Substitution Sequence (TSS) with age structure defined in \cite{meleard2009trait} to take into account the Lansing Effect. 
\begin{defi}\label{def:TSS}
We define the measure valued process
$(T_t(dx,da),t\geq 0)$ by
\begin{equation*}
T_t(dx,da)=\delta_{X(t)}(dx) \overline{n}_{X(t)}^1(a)da +\delta_{(X_b(t),0)}(dx) \overline{n}_{X(t)}^2(a)da
\end{equation*}
where $(X(t),t\geq 0)= ((X_b(t),X_d(t)),t\geq 0)$ is defined as the pure jump Markov process on $\mathcal{V}$ with infinitesimal generator $L$ defined for all measurable and bounded function $\varphi:\mathcal{V}\rightarrow \R$ and $x\in\mathcal{V}$ by
\begin{align}\label{eq:intensitytss}
L\varphi(x)=\int_{(\R_+)^2}(\varphi(x+h)-\varphi(x))(\lambda(x+h)-\lambda(x))\text{ }\overline{n}_x^1(0)\text{ }\mu(dh)
\end{align}
where
\begin{equation*}
\mu(dh)=\frac{k(h_b)dh_b\otimes \delta_0(dh_d) +\delta_0(dh_b)\otimes k(h_d)dh_d}{2},
\end{equation*}
$k$ being defined in (\ref{eq:k}). 
\\The process $X$ will be called the \textit{Trait Substitution Sequence}. \end{defi}
\begin{rema}
The process $(T_t,t\geq 0)$ describes the evolution of the phenotypic structure of the population at the mutational time-scale. At each time, and because of the Lansing effect, the population is composed of two sub-populations: the first one corresponds to viable individuals with trait $X(t)=(X_b(t), X_d(t))\in\mathcal{V}$ whose age distribution is given by $\overline{n}_{X(t)}^1(a)da$; the second one is composed of individuals generated through the Lansing effect, with trait $(X_b(t), 0)$ and age distribution $\overline{n}_{X(t)}^2(a)da$.
\end{rema}
\begin{rema}
Figure \ref{fig:trajtss} describes the behaviour of the process $(X(t),t\geq 0)$. 
\\Any trait $x\in \mathcal{H}$ is an absorbing state for the process $X$. Indeed, by Proposition \ref{pr:invasionset}, for all $\varphi:\mathcal{V}\rightarrow \R$ measurable and bounded, for all $x\in\mathcal{H}$, $L\varphi(x)=0$. That means that when the trait $(x_b,x_d)$ satisfies $x_b=x_d$, no mutation can invade. 
\\By definition of the measure $\mu(dh)$ the process evolves horizontally or vertically (which means that the two traits $x_b$ and $x_d$ do not mutate simultaneously).  By Proposition \ref{pr:invasionset}, we deduce easily that the process evolves from the left to the right on $U_1$ and from bottom to top on $U_2$. 
\\Since the jump rates are continuous and tend to zero on $\mathcal{H}$, the process slows down as it approaches $\mathcal{H}$. 
\begin{figure}[!h]
\begin{center}
\includegraphics[scale=0.2]{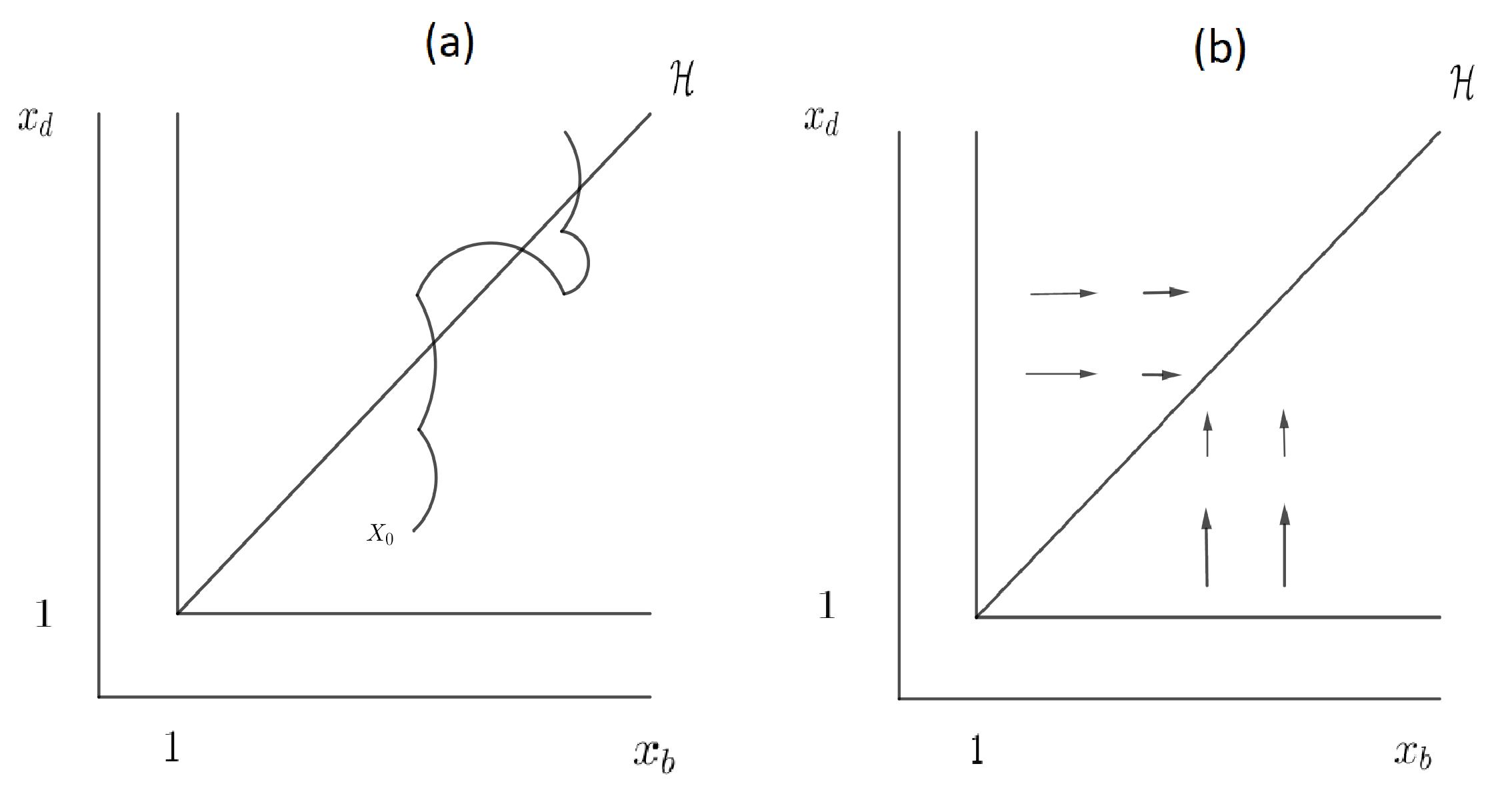}
\caption{(a): This picture represents a trajectory of the TSS process. (b): This picture represents the drift associated with the TSS process.}
\label{fig:trajtss}
\end{center}
\end{figure}
\end{rema}
We now explain the heuristics, rigorously proved in \cite{meleard2009trait}, which allow to obtain the TSS from the individual based model defined in Section 2.1. The main ideas  have been introduced in \cite{champagnat2006microscopic}, for a population without age-structure. They are based on the time-scale separation assumption on mutation probability $p_K$: as $K\rightarrow +\infty$
\begin{equation}\label{eq:scaleadaptive}
\forall V>0,\quad \exp(-KV)=o(p_K),\quad p_K=o\left(\frac{1}{K\log(K)}\right),
\end{equation}
which allows to separate the effect of the natural selection and the appearance of new mutants. Let $x\in \mathcal{V}$ and consider a sequence $(Z_0^K)_{K}$  converging to $ \delta_x n_x^1(0,a)da $ as $K\rightarrow +\infty$.

\vspace{0.3cm}
\textbf{1) Monomorphic approximation.} For large $K$, the process $Z_t^K$ stays close to the measure $\delta_x n_x^1(t,.) + \delta_{(x_b,0)} n_x^2(t,.)$ where $n_x(t,.)=(n_x^1(t,.),n_x^2(t,.))$ satisfies the partial differential equation (\ref{eq:edpmono}). By Proposition \ref{prop:monoedp}, the dynamics $n_x(t,.)$ converges to $\overline{n}_x= (\overline{n}_x^1,\overline{n}_x^2)$ as $t$ tends to infinity and hence reaches a given neighbourhood of $\overline{n}_x$ in finite time. By using large deviation results (\cite{tran2008large}), we obtain with probability tending to one as $K$ tends to infinity that the process $Z_t^K$ stays in this neighbourhood of $\delta_x \overline{n}_x^1 + \delta_{(x_b,0)} \overline{n}_x^2$ during a time $e^{CK}$ for some $C>0$. The left-hand side in Assumption (\ref{eq:scaleadaptive}) ensures that the next mutation appears before the process leaves this neighbourhood.

\vspace{0.3cm}
\textbf{2) Appearance of a mutant.}
We deduce that the monomorphic population with trait $x$ creates a mutant with trait: 
\begin{itemize}
\item[(i)] $y=(x_b +h_b,x_d)$ or $y=(x_b ,x_d +h_d)$ at a rate approximately equal to
$2K p_K(1-p_K)\overline{n}^{1}_{x}(0)$;
\item[(ii)]$y=(x_b +h_b,x_d+h_d)$ at a rate approximately equal to $Kp_K^2 \overline{n}_{x}^1(0)$;
\item[(iii)]$y=(x_b, h_d)$ or $y=(x_b+h_b, 0)$ at  at rate approximately equal to $2Kp_K(1-p_K)\overline{n}_x^2(0)$;
\item[(iv)]$y=(x_b +h_b, h_d)$ at rate approximately equal to $Kp_K^2 \overline{n}_x^2(0)$.
\end{itemize}
where the variables $h_b$ and $h_d$ are chosen independently with distribution $k$. 

\vspace{0.2cm}
Since $p_K^2=o(p_K(1-p_K))$, the cases (ii) and (iv) cannot be observed on the mutation time-scale $t/2K(1-p_K)$.

\vspace{0.3cm}
\textbf{3) Effect of the natural selection.}
In cases (i) and (iii), the mutant population dynamics is approximated by a bi-type age structured branching process with birth rates $\textbf{B}_y$ and death rates $\textbf{D}_y +\eta \Vert \overline{n}_x\Vert_1 \textbf{I}$. By Proposition \ref{pr:invasionfitness}, the mutant population survives with probability
\begin{equation*} 
(\lambda(y)-\lambda(x))\vee 0.
\end{equation*}
Let us detail the two different cases.
\begin{itemize}
\item[•] Case (i). With  probability $1/2$, the trait of the mutant is  $y=(x_b+h_b, x_d)$. From Proposition \ref{pr:invasionfitness}, we deduce that the mutant can survive if and only if
\begin{align}\label{eq:inv1}
\lambda(x_b+h_b,x_d)>\lambda(x)\Longleftrightarrow h_b>0\text{ and }x\in U_1.
\end{align}
With probability $1/2$, $y=(x_b,x_d+h_d)$ and can survive if and only if
\begin{align}\label{eq:inv2}
\lambda(x_b,x_d +h_d)>\lambda(x)\Longleftrightarrow h_d>0\text{ and }x\in U_2.
\end{align}
\item[•] Case (iii) (Lansing effect). The mutant has the trait $y=(x_b, h_d)$ or $y=(x_b +h_b, 0)$. By (\ref{eq:k}), we have $y_b\wedge y_d<1$ which implies that $\lambda(y)<0$ and then $(\lambda(y)-\lambda(x))\vee 0 =0$. In this case,  the mutant population becomes extinct.
\end{itemize}
We deduce that the mutant can only survive (with positive probability) in case (i). The birth rate of such a mutant (on the time-scale $t/2p_K(1-p_K)$) is given by the intensity measure on $\R$
\begin{equation*}
\overline{n}_x^1(0)\mu(dh)
\end{equation*}
that leads to the right hand side in (\ref{eq:intensitytss}). The probability that such a mutant survives and reaches a size of  order $K$ equals 
\begin{equation*}
(\lambda(y)-\lambda(x))\vee 0.
\end{equation*} 
Moreover (\ref{eq:inv1}) and (\ref{eq:inv2}) imply that 
\begin{equation*}
((\lambda(x+h)-\lambda(x))\vee 0)\mu(dh)= (\lambda(x+h)-\lambda(x))\mathds{1}_{\R_+^2}(h)\mu(dh).
\end{equation*}
This allows to obtain the left hand side of (\ref{eq:intensitytss}).
\\If the mutant population becomes extinct, the resident population remains close to its equilibrium $\overline{n}_x$.
\\If the mutant population survives, then it reaches a size of order $K$ with a probability that tends to one and the  population dynamics is approximated by the solution $(n_x(t,.),n_y(t,.))$ of the bimorphic system of partial differential equations (\ref{eq:bimo}). In this case we have necessarily $\lambda(y)>\lambda(x)$. Following Proposition \ref{pr:bimoedp}, the deterministic dynamics $(n_x(t,.),n_y(t,.))$  reaches a neighbourhood of $(0,\overline{n}_y)$. By using branching processes approximations and  arguments introduced in \cite{champagnat2006microscopic}, we can deduce that the resident population with trait $x$ becomes extinct. One can prove as in \cite{champagnat2006microscopic} that this competition phase has a duration of order $\log(K)$.
\\The right hand side of Assumption (\ref{eq:scaleadaptive}) ensures that the three steps of invasion are completed before the next mutation occurs. \begin{flushright}
•
\end{flushright}The Markov property allows to reiterate the same reasoning for the next mutation occurence.  In summary, the following theorem holds. 
\begin{theo}\label{th:approxtss}
The following convergence holds in the sense of finite dimensional marginals:
\begin{equation*}
\left(Z_{\frac{t}{2Kp_K(1-p_K)}}^K, t\geq 0\right)\longrightarrow (T_t, t\geq 0),\quad \text{ as }K\rightarrow \infty,
\end{equation*}
where the process $T$ is defined in Definition \ref{def:TSS}.
\end{theo}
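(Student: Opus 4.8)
The plan is to follow the three-phase scheme of \cite{champagnat2006microscopic} and \cite{meleard2009trait}, adapting it to the two features specific to our model: the ``resident'' population is already of two types (the viable individuals and the Lansing offspring of trait $(x_b,0)$), and the mutation law $\mu$ is carried by the two coordinate axes rather than being absolutely continuous on $\R^2$. Since only convergence of finite-dimensional marginals is asserted, by the strong Markov property and an induction over a time grid $0=t_0<\dots<t_m$ it suffices to analyse one ``sweep'': starting from $Z^K_0$ close to $\delta_x\,\overline{n}_x^1(a)\,da+\delta_{(x_b,0)}\,\overline{n}_x^2(a)\,da$ with $x\in\mathcal{V}$, identify the law of the next successful mutant and the rescaled time it takes to replace the resident. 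The first ingredient is \emph{quasi-stationarity of the resident}: combining the functional law of large numbers of \cite{tran2008large}, the global stability of $\overline{n}_x$ from Proposition \ref{prop:monoedp}, and large-deviation lower bounds for the exit time from a neighbourhood of an attracting equilibrium, one shows that for every $\varepsilon>0$ there is $C>0$ such that, with probability tending to $1$, $Z^K$ stays within $\varepsilon$ of $\delta_x\overline{n}_x^1+\delta_{(x_b,0)}\overline{n}_x^2$ for a duration of order $e^{CK}$. Since the first mutation appears after a time of order $1/(Kp_K)$ and the left inequality in \eqref{eq:scaleadaptive} gives $e^{-CK}=o(Kp_K)$, the resident is at equilibrium $\overline{n}_x$ when the mutant arises.

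\textbf{Appearance and survival of a mutant.} At equilibrium the birth flux from type-$1$ residents is $\approx K\,\overline{n}_x^1(0)$ and from Lansing individuals $\approx K\,\overline{n}_x^2(0)$; a birth carries exactly one mutation with probability $2p_K(1-p_K)$ and two with probability $p_K^2$, the latter contributing a rate $O(Kp_K^2)=o(Kp_K)$ invisible on the slow scale. A one-trait mutation off a type-$1$ resident yields $y=(x_b+h_b,x_d)$ or $y=(x_b,x_d+h_d)$, each with probability $\frac{1}{2}$ and $h_b,h_d$ drawn from $k$; off a Lansing individual it yields $y=(x_b+h_b,0)$ or $y=(x_b,h_d)$, and since $k(0,\cdot)$ is supported in $[0,1]$ this forces $y_b\wedge y_d\le 1$, hence $\lambda(y)\le 0<\lambda(x)$, so such a mutant line dies almost surely. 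While a mutant subpopulation has size $o(K)$ it is well approximated by the bi-type age-structured branching process of the definition preceding Proposition \ref{pr:invasionfitness} — the reducibility of $\textbf{B}_y$ being harmless, since extinction of the mutant is equivalent to extinction of its type-$1$ component, exactly as in the proof of that proposition — so it survives with probability $(\lambda(y)-\lambda(x))\vee 0$, which by Proposition \ref{pr:invasionset} is positive precisely when ($h_b>0$ and $x\in U_1$) or ($h_d>0$ and $x\in U_2$). Multiplying the mutant-production rate by this survival probability, rescaling time by $2Kp_K(1-p_K)$, and using $((\lambda(x+h)-\lambda(x))\vee 0)\,\mu(dh)=(\lambda(x+h)-\lambda(x))\mathds{1}_{\R_+^2}(h)\,\mu(dh)$, I expect the instantaneous law of the next invasion to converge to the one described by the generator $L$ in \eqref{eq:intensitytss}; in particular the rescaled waiting time converges to an exponential variable with parameter $\overline{n}_x^1(0)\int(\lambda(x+h)-\lambda(x))\vee 0\,\mu(dh)$.

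\textbf{Invasion implies fixation, and conclusion.} Conditionally on the mutant surviving its branching phase and reaching size $\varepsilon K$, the pair (resident, mutant) is close to the solution of the bimorphic system \eqref{eq:bimo} with $\lambda(y)>\lambda(x)$; by Proposition \ref{pr:bimoedp} that solution converges to $(0,\overline{n}_y)$, and a symmetric branching comparison for the now-small resident component shows that the resident indeed becomes extinct, in a time of order $\log K$. The right inequality $p_K=o(1/(K\log K))$ in \eqref{eq:scaleadaptive} then ensures that no new mutation occurs during this competition phase, so the population re-emerges monomorphic at $\overline{n}_y$ (its Lansing component $\overline{n}_y^2$ having re-formed), i.e. $X$ has jumped from $x$ to $y$. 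Iterating through the strong Markov property over the grid $0=t_0<\dots<t_m$ then yields convergence of $(Z^K_{t_i/(2Kp_K(1-p_K))})_{i}$ to $(T_{t_i})_{i}$, the claimed convergence of finite-dimensional marginals.

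\textbf{Expected main obstacle.} The hard part will be the quasi-stationarity step: establishing the exponentially long persistence near $\overline{n}_x$ with enough uniformity in the moving trait, for our \emph{degenerate two-type} system whose birth matrix $\textbf{B}_x$ is merely triangular and whose coefficients are discontinuous. The large-deviation and branching-coupling estimates of \cite{champagnat2006microscopic} and \cite{meleard2009trait} are set up for absolutely continuous mutation kernels and essentially irreducible demographies, so they must be redone here, the genuinely new ingredient being the simultaneous control of the Lansing subpopulation $\overline{n}_x^2$ alongside the main type. By contrast the atomicity of $\mu$ should cost nothing: the directions in which $\mu$ is ``neutral'' (the $h_d$-axis on $U_1$, the $h_b$-axis on $U_2$) merely spawn critical branching processes that die out — precisely what the indicator $\mathds{1}_{\R_+^2}(h)$ in $L$ records.
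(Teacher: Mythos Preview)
Your proposal is correct and follows essentially the same three-phase scheme as the paper's own treatment: quasi-stationarity of the resident near $\overline{n}_x$ via large deviations, branching-process approximation of a rare mutant with survival probability $(\lambda(y)-\lambda(x))\vee 0$, and invasion-implies-fixation through the bimorphic PDE of Proposition~\ref{pr:bimoedp}. Note that the paper does not give a self-contained proof either---it presents exactly these heuristics (labelled Steps 1--3) and defers the rigorous justification to \cite{meleard2009trait} and \cite{champagnat2006microscopic}; your identification of the main technical obstacle (adapting the exit-time and coupling estimates to the triangular two-type structure with the Lansing subpopulation) is a fair assessment of what a fully detailed write-up would require, but the paper itself does not carry this out.
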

\subsection{A canonical inclusion for adaptive dynamics}
In this section, we assume in that mutations are small. We study the behaviour of the process $X$ defined in (\ref{eq:intensitytss}) when the mutation scale equals $\epsilon>0$ and the time is rescaled by $1/\epsilon^2$. To this aim, we define the rescaled trait substitution sequence process $X^{\epsilon}$ and study the limiting behaviour of the process $X^{\epsilon}$ as $\epsilon \rightarrow 0$. In the usual cases (smooth fitness functions) the canonical equation introduced by Dieckmann-Law can be derived as limit of $X^{\epsilon}$ as $\epsilon\rightarrow 0$ (\cite{champagnat2011polymorphic}). As observed in Section 4.1, the fitness function $\lambda(x)$ does not satisfy these regularity assumptions. To overpass this difficulty, we use the approach developed in \cite{gast2012markov} based on differential inclusions. We prove in Theorem \ref{theo:diffinc} that the set of limit points of the family $X^{\epsilon}$ is characterised as the set of solutions of a differential inclusion.
\begin{defi}
The rescaled TSS process $(X^{\epsilon}(t),t\geq 0)$ is defined as a pure jump Markov process with infinitesimal generator $L^{\epsilon}$ defined for all measurable and bounded function $\varphi:\mathcal{V}\rightarrow \R$ and $x\in\mathcal{V}$ by
\begin{align}\label{eq:generatornormtss}
L^{\epsilon}\varphi(x)=\frac{1}{\epsilon^2}\int_{(\R_+)^2}(\varphi(x+\epsilon h)-\varphi(x))(\lambda(x+\epsilon h)-\lambda(x))\overline{n}_x^1(0)\mu(dh)
\end{align}
\end{defi}
\begin{rema}
The process $X^{\epsilon}$ shows a dynamics similar to the process $X$. The jump rates are of order $1/\epsilon$ and the jump sizes are of order $\epsilon$.
\end{rema}
We first introduce the set-valued map $F:\mathcal{V}\rightarrow \mathcal{P}(\R_+^2)$ defined for any $x\in\mathcal{V}$ by
\begin{align}\label{eq:mapvaluedset}
\forall i \in\lbrace 1,2\rbrace,\quad &\forall x\in U_i,\quad F(x)=\left( f(x,1) \right)_i \nonumber\\
&\forall x\in\mathcal{H},\quad F(x)=\left\lbrace \frac{1}{2}\left(\begin{array}{c}
f(x,u) \\ 
f(x,u)
\end{array}\right), u\in\left[0,1\right]\right\rbrace,
\end{align}
where for all $(x,u)\in\mathcal{V}\times\left[0,1\right]$,
\begin{equation}\label{eq:gradfit}
f(x,u)=\left(\int_{0}^{u}h^2k(h)dh +\int_{u}^{1} h u k(h)dh\right)\frac{e^{-\lambda(x) (x_b\wedge x_d)}}{G(x)}\frac{\overline{n}_x^1(0)}{2};
\end{equation}
and $G$ is defined in Proposition \ref{pr:proprietesmalthus}.

\vspace{0.2cm}
This set-valued map $F$ somehow generalises the classical fitness gradient. It is represented by a picture in Figure \ref{fig:dessinFH} (b). 
\\Let us explain the ideas leading to consider this function. Let us consider a compact subset $K$ of $U_i$. Since the Malthusian parameter $\lambda$ is differentiable on $U_i$, the following approximation holds: for all $h\in\R_+$, uniformly for $x\in K$, we have
\begin{equation}\label{eq:approx}
\lambda(x+\epsilon (h)_i)-\lambda(x)\approx \epsilon (h)_i.\nabla \lambda(x),
\end{equation}
which leads to the definition of $F$ on $U_i$. We analyse the case $x\in\mathcal{H}$ for which the approximation (\ref{eq:approx}) is not true. Indeed, let $x\in\mathcal{H}$ and let $u\in\left[0,1\right]$ and let us consider a sequence $x^{\epsilon}=x-\epsilon(u)_i$, we have 
\begin{align*}
\lambda(x^{\epsilon} +\epsilon (h)_i)-\lambda(x^{\epsilon})&=\lambda(x +\epsilon(h-u)_i) -\lambda(x-\epsilon(u)_i)\\
&=\lambda(x +\epsilon(h-u)_i)  -\lambda(x) +\lambda(x)-\lambda(x-\epsilon(u)_i)
\end{align*}
Assume $h<u$, since $\lambda$ is differentiable on $U_i$ we obtain when $\epsilon$ tends to $0$ that
\begin{align*}
\lambda(x^{\epsilon} +\epsilon (h)_i)-\lambda(x^{\epsilon}) \approx \epsilon (h-u)_i.\nabla \lambda(x) + \epsilon(u)_i.\nabla \lambda(x) \approx \epsilon (h)_i.\nabla \lambda(x),
\end{align*}
where $\nabla \lambda(x)$ is defined as the limit of $\nabla \lambda(y)$, $y\rightarrow x$, $y\in U_i$. That leads to the first integral in (\ref{eq:gradfit}).
If $h>u$, we obtain that  $\lambda(x +\epsilon(h-u)_i)  -\lambda(x)=0$ and 
\begin{align*}
\lambda(x^{\epsilon} +\epsilon (h)_i)-\lambda(x^{\epsilon})\approx \epsilon(u)_i.\nabla \lambda(x)\leq \epsilon(h)_i.\nabla \lambda(x),
\end{align*}
which leads to the second integral in (\ref{eq:gradfit}).
The inequality above means that when the process evolves near the diagonal $\mathcal{H}$ the adaptation slows down. Let us now define what we do  mean by a solution of the differential inclusion driven by the set-valued map $F$.
\begin{defi}
A solution of the differential inclusion driven by the set-valued map $F$ is an absolutely continuous function $x :\left[0,T\right]\rightarrow \mathcal{V}$ which satisfies $x(0)=c^0$ and for almost all $t\in\left[0,T\right]$,
\begin{equation}\label{eq:difinc}
\frac{\text{d}x(t)}{\text{dt}}\in F(x(t)).
\end{equation}
\end{defi}
It generalises the classical canonical equation for adaptive dynamics. For any $T>0$ and $x^0\in\mathcal{V}$, we denote by $\mathcal{S}_F(T,x^{0})$ the set of solutions of the differential inclusion (\ref{eq:difinc}). The following theorem characterises the limit of the process $X^{\epsilon}$ as the solution of the differential inclusion (\ref{eq:difinc}).
\begin{theo}\label{theo:diffinc}
Let $x^{0}\in\mathcal{V}$. Assume that $X^{\epsilon}(0)\rightarrow x^{0}$ in probability as $\epsilon\rightarrow 0$. For all $T,\delta>0$,
\begin{equation*}
\lim_{\epsilon\rightarrow 0}\mathbb{P}\left( \inf_{x\in \mathcal{S}_F(T,x^{0})}\sup_{t\in\left[0,T\right]}\vert X^{\epsilon}(t) -x(t)\vert >\delta\right) = 0.
\end{equation*}
\end{theo}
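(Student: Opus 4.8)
The plan is to follow the stochastic approximation / differential inclusion strategy of \cite{gast2012markov}. First I would rewrite the generator $L^{\epsilon}$ as a drift plus a vanishing-order fluctuation: for a smooth test function $\varphi$, Taylor expansion gives $L^{\epsilon}\varphi(x) = b^{\epsilon}(x)\cdot\nabla\varphi(x) + O(\epsilon)$, where
\begin{equation*}
b^{\epsilon}(x) = \frac{1}{\epsilon}\int_{(\R_+)^2} h\,\bigl(\lambda(x+\epsilon h)-\lambda(x)\bigr)\,\overline{n}_x^1(0)\,\mu(dh).
\end{equation*}
Using Proposition \ref{pr:proprietesmalthus} (the explicit gradient on $U_1\cup U_2$ and the fact that $\lambda$ is constant in $h_b$ on $U_2$ and in $h_d$ on $U_1$, so $\lambda(x+\epsilon h)-\lambda(x)=0$ for the "wrong" mutations), one checks that for $x$ at fixed distance from $\mathcal{H}$, $b^{\epsilon}(x)\to F(x)$ (a singleton there); whereas for $x$ near $\mathcal{H}$ the computation in (\ref{eq:approx}) and the lines following it shows that $b^{\epsilon}$ is, up to $o(1)$, of the form $\tfrac12(f(x,u),f(x,u))$ with $u\approx \mathrm{dist}(x,\mathcal{H})/\epsilon$ — i.e. its accumulation points as $\epsilon\to 0$ lie in $F(x)$. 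So the key analytic input is: the drift $b^{\epsilon}$ has all its limit points (in the appropriate Kuratowski/graph sense) contained in the closed convex set-valued map $F$, and $\epsilon^2$ times the quadratic variation of the jumps vanishes. This makes $F$ an upper semicontinuous, convex, compact-valued map, and classical results on differential inclusions give existence of solutions in $\mathcal{S}_F(T,x^0)$, so the statement is not vacuous.

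Next I would set up tightness. The processes $(X^{\epsilon})_{\epsilon>0}$ live in a fixed compact subset of $\mathcal{V}$ on $[0,T]$ because the jump rates are bounded (by Proposition \ref{pr:proprietesmalthus}(iii), $\nabla\lambda$ is bounded, and $\overline{n}_x^1(0)$ is bounded), so $X^{\epsilon}(t)$ grows at most linearly; one should first confine the dynamics to a large compact $K\subset\mathcal{V}$ and argue a posteriori that it never leaves (the drift is bounded, so on $[0,T]$ the trait stays in a ball; and $x_b\wedge x_d$ is nondecreasing along the dynamics so it never crosses $\partial\mathcal{V}$ downward). The jump amplitudes are $O(\epsilon)$ and the rates $O(1/\epsilon)$, so using the Aldous–Rebolledo criterion the family $(X^{\epsilon})$ is tight in $C([0,T],\R^2)$ (in the limit the paths are continuous because individual jumps vanish). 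By Prokhorov, along any subsequence $X^{\epsilon}\Rightarrow X$ for some limit process $X$ with continuous, in fact Lipschitz, paths (the drift is uniformly bounded).

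Then comes the identification of the limit, which is the heart of the argument and the main obstacle. The difficulty is precisely that $b^{\epsilon}$ does \emph{not} converge pointwise near $\mathcal{H}$, so one cannot pass to the limit in a martingale problem in the usual way; instead one shows that any limit point $X$ satisfies the differential inclusion. The standard device (as in \cite{gast2012markov}): write $X^{\epsilon}(t) = X^{\epsilon}(0) + \int_0^t b^{\epsilon}(X^{\epsilon}(s))\,ds + M^{\epsilon}_t$ with $M^{\epsilon}$ a martingale whose bracket is $O(\epsilon)\to 0$, so $M^{\epsilon}_t\to 0$ uniformly in probability; hence the "drift integral" $Y^{\epsilon}_t := \int_0^t b^{\epsilon}(X^{\epsilon}(s))\,ds$ converges (along the subsequence) to $X_t - X_0$, which is therefore absolutely continuous with $\dot X_t = \lim \dot Y^{\epsilon}_t$ in a weak sense. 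To conclude $\dot X_t \in F(X_t)$ for a.e.\ $t$, use that $F$ is upper semicontinuous with closed convex values, together with the property that $b^{\epsilon}(x)\in F^{\delta}(x)$ ($\delta$-enlargement of $F$) for $\epsilon$ small enough uniformly on $K$ — this is the quantitative version of the calculation after (\ref{eq:approx}), and checking it carefully near $\mathcal{H}$, where $u=\mathrm{dist}/\epsilon$ ranges over $[0,1]$ and one must control the transition region, is the delicate point. A convexity/closure lemma (e.g.\ the measurable selection + Mazur argument used for differential inclusions: if $f_n\to f$ weakly in $L^1$ and $f_n(t)\in F^{\delta_n}(x_n(t))$ with $x_n\to x$ uniformly and $\delta_n\to 0$, then $f(t)\in F(x(t))$ a.e.) then gives $\dot X_t\in F(X_t)$. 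Finally, since every subsequential limit belongs to $\mathcal{S}_F(T,x^0)$ and $\mathcal{S}_F(T,x^0)$ is compact in $C([0,T],\R^2)$ (closedness from the same closure lemma, relative compactness from equi-Lipschitz bounds), a routine argument converts "every limit point lies in $\mathcal{S}_F$" into the claimed convergence in probability of $\inf_{x\in\mathcal{S}_F}\sup_{t}|X^{\epsilon}(t)-x(t)|$ to $0$: if it did not, one could extract a subsequence along which this distance stays $\geq\delta$ with positive probability, contradicting tightness plus the identification of limits.
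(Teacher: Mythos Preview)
Your drift computation near $\mathcal{H}$ is wrong, and this breaks the whole identification step. For $x\in U_1$ at distance $\epsilon u$ from $\mathcal{H}$ (so $x_d-x_b=\epsilon u$, $u\in(0,1)$), a positive mutation in $x_d$ leaves $x_b\wedge x_d=x_b$ unchanged, so the $x_d$-component of $b^{\epsilon}(x)$ is exactly $0$; the calculation after (\ref{eq:approx}) that you cite gives $b^{\epsilon}(x)\approx (f(x,u),0)$, not $\tfrac12(f(x,u),f(x,u))$. Symmetrically, on the $U_2$ side one gets $(0,f(x,v))$. Hence the set of accumulation points of $b^{\epsilon}(x^{\epsilon})$ as $x^{\epsilon}\to x\in\mathcal{H}$ is $\{(f(x,u),0):u\in[0,1]\}\cup\{(0,f(x,v)):v\in[0,1]\}$, whose convex hull is the strictly larger set $H(x)$ of Lemma~\ref{le:mapset}, not $F(x)$. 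In particular $F$ is \emph{not} upper semicontinuous at $\mathcal{H}$ (e.g.\ $(f(x,1),0)\notin F(x)$), so your claim that $b^{\epsilon}(x)\in F^{\delta}(x)$ uniformly is false (take $x\in U_1$ at distance $\epsilon/2$ from $\mathcal{H}$: $b^{\epsilon}(x)\approx (f(x,1/2),0)$ stays bounded away from both $F(x)=\{(f(x,1),0)\}$ and from the diagonal set $F(y)$, $y\in\mathcal{H}$). The Mazur/closure lemma you invoke therefore cannot be applied with $F$.

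What is missing is precisely the paper's two-step argument: first apply the Gast--Gaujal machinery with the \emph{correct} upper semicontinuous convexified map $H$ (this is where your tightness/martingale/closure outline would work verbatim), yielding convergence to $\mathcal{S}_H(T,x^0)$; then prove the separate, purely deterministic Lemma~\ref{le:eqmapset} that $\mathcal{S}_H(T,x^0)=\mathcal{S}_F(T,x^0)$. That lemma is not automatic: it exploits the specific geometry that on $U_1$ the $H$-drift has zero $x_d$-component and on $U_2$ zero $x_b$-component, so an $H$-solution with $\dot x_b(t_0)\neq\dot x_d(t_0)$ at $t_0\in\mathcal{H}$ would have to enter one of the $U_i$ and immediately contradict monotonicity of the other coordinate. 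Without this lemma (or an equivalent argument), your limit identification only reaches $\mathcal{S}_H$, which is a priori larger than $\mathcal{S}_F$.
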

\begin{rema}
Theorem \ref{theo:diffinc} justifies our complete study. Let $(x(t),t\in\left[0,T\right])$ be a solution of (\ref{eq:difinc}). On each $U_i$, it satisfies
\begin{equation*}
\frac{\text{d}x(t)}{\text{d}t}=(f(x(t),1))_i,\quad x(t)\in U_i.
\end{equation*}
The map $x\in U_i\mapsto (f(x,1))_i$ is Lipschitz and bounded below by a positive constant. Hence, uniqueness holds on $U_i$ for (\ref{eq:difinc}) and any solution  reaches in finite time the diagonal $\mathcal{H}$. On $\mathcal{H}$, the solution satisfies
\begin{equation*}
\frac{\text{d}x(t)}{\text{d}t}\in F(x(t)),\quad x(t)\in \mathcal{H}.
\end{equation*}
Since for all $x\in\mathcal{H}$, $F(x)\subset \mathcal{H}$, we deduce that any solution stays in $\mathcal{H}$. 
\\Figure \ref{fig:simucanon} illustrates Theorem \ref{theo:diffinc}. We represent some trajectories of the process $(X^{\epsilon}(t),t\geq 0)$ started at $X^{\epsilon}(0)=(2, 1.5)$ for $\epsilon=0.001$. 
\begin{figure}[!h]
\begin{center}
\includegraphics[scale=0.42]{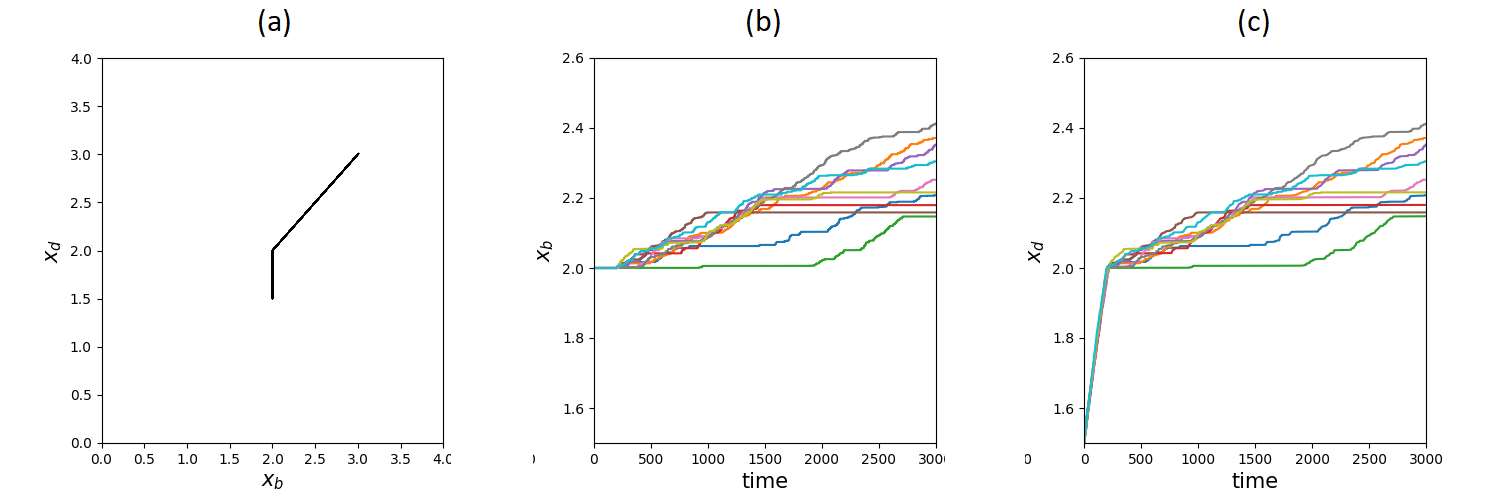}
\caption{We represent 10 simulations of the process $(X^{\epsilon}(t),t\geq 0)$ for $\epsilon = 0.001$, $X^{\epsilon}(0)=(2,1.5)$. (a): We represent $X^{\epsilon}_d(t)$ as a function of $X^{\epsilon}_b(t)$. (b): $(X^{\epsilon}_b(t),t\geq 0)$. (c): $(X^{\epsilon}_d(t),t\geq 0)$. We observe on (b) and (c) that before reaching the diagonal $\mathcal{H}$ the limit behaviour is deterministic. On $\mathcal{H}$, the process evolves with  speed in $F(x)$ for $x\in\mathcal{H}$.}
\label{fig:simucanon}
\end{center}
\end{figure}
\end{rema}
The proof of Theorem \ref{theo:diffinc} is based on \cite[Theorem 1]{gast2012markov}  recalled in the Appendix, see Theorem \ref{th:gastgaujal}. We start by writing the process $X^{\epsilon}$ as a time-changed Markov chain. We first re-write the generator $L^\epsilon$ as follows
\begin{align*}
L^{\epsilon}\varphi(x) = \int_{\R_+^2}(\varphi(x+h)-\varphi(x))\frac{\overline{n}_x^1(0)}{\epsilon}k^{\epsilon}(x,dh)
\end{align*}
where $k^{\epsilon}(x,dh)=$
\begin{equation*}
\frac{\lambda(x+h)-\lambda(x)}{\epsilon}\mu^{\epsilon}(dh) +\left(\int_{(\R_+)^2}\left(1-\frac{\lambda(x+g)-\lambda(x)}{\epsilon}\right)\mu^{\epsilon}(dg)\right)\delta_0(dh)
\end{equation*}
and $\mu^{\epsilon}$ is the image measure of  $\mu$ by the map $h\mapsto \epsilon h$. The following lemma is proved in \cite[Ch. 4, S. 2, p. 163]{ethier2009markov}.
\begin{lemm}[\cite{ethier2009markov}]
Let $\tau = \sup_{x\in\mathcal{V}}\overline{n}_x^1(0)$. Let $(Y^{\epsilon}(k),k\geq 0)$ be a Markov chain with jump law
\begin{equation*}
\tilde{k}^{\epsilon}(x,dh)=\frac{\overline{n}_x^1(0)}{\tau}k^{\epsilon}(x,dh) + \left(1-\frac{\overline{n}_x^1(0)}{\tau}\right)\delta_0(dh)
\end{equation*}
and $\Lambda^{\epsilon}$ be a Poisson process with intensity $\tau/\epsilon$. Then the processes $X^{\epsilon}$ and $Y^{\epsilon}(\Lambda^{\epsilon})$ have the same law.
\end{lemm}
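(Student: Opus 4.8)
The statement is a direct instance of the classical \emph{uniformization} (Poisson embedding) of a jump Markov process with uniformly bounded jump rate, so the plan is to recast it in that form and check the hypotheses of the cited result of \cite{ethier2009markov}. First I would read off from the rewritten generator
\begin{equation*}
L^{\epsilon}\varphi(x) = \frac{\overline{n}_x^1(0)}{\epsilon}\int_{\R_+^2}(\varphi(x+h)-\varphi(x))\,k^{\epsilon}(x,dh)
\end{equation*}
that $X^{\epsilon}$ is the pure jump process which waits an exponential time of rate $c(x):=\overline{n}_x^1(0)/\epsilon$ and then moves according to the kernel $k^{\epsilon}(x,\cdot)$, the atom at $h=0$ inside $k^{\epsilon}$ encoding a fictitious self-jump that leaves $\varphi(x+h)-\varphi(x)$ unaffected. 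The uniformization constant is $\tau/\epsilon$, which dominates $c(x)$ for every $x\in\mathcal{V}$ precisely because $\overline{n}_x^1(0)\le \tau$ by definition of $\tau$; this is the structural fact that makes the embedding possible.

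The one substantive verification is that $\tilde{k}^{\epsilon}(x,\cdot)$ is a genuine probability transition kernel, which reduces to showing that $k^{\epsilon}(x,\cdot)$ is a probability measure, since $\overline{n}_x^1(0)/\tau\in[0,1]$ is immediate. The density component $\frac{\lambda(x+h)-\lambda(x)}{\epsilon}\mu^{\epsilon}(dh)$ is nonnegative on $\R_+^2$ by Proposition \ref{pr:invasionset} (fitness increments in the positive cone are $\ge 0$), and the complementary atom at $0$ is nonnegative exactly when the total density mass $\int \frac{\lambda(x+g)-\lambda(x)}{\epsilon}\mu^{\epsilon}(dg)$ does not exceed $1$. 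Here is where the regularity established in Proposition \ref{pr:proprietesmalthus} enters: since $k$ has support in $[-1,1]$, one has $\lambda(x+\epsilon h)-\lambda(x)\le \epsilon\,\Vert \nabla\lambda\Vert_{\infty}$ with $\sup_{U_1\cup U_2}\Vert\nabla\lambda\Vert<+\infty$, so the normalized increment $\frac{\lambda(x+\epsilon h)-\lambda(x)}{\epsilon}$ is uniformly bounded and its $\mu$-average stays below $1$. Consequently $\tilde{k}^{\epsilon}(x,\cdot)$ is a probability kernel and $(Y^{\epsilon}(k))_{k}$ is a well-defined Markov chain.

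Next I would compute the generator of the subordinated process $t\mapsto Y^{\epsilon}(\Lambda^{\epsilon}_t)$. As $\Lambda^{\epsilon}$ is Poisson of rate $\tau/\epsilon$ and independent of $Y^{\epsilon}$, conditioning on whether a Poisson tick occurs in $[t,t+dt]$ gives the generator $\frac{\tau}{\epsilon}\int_{\R_+^2}(\varphi(x+h)-\varphi(x))\tilde{k}^{\epsilon}(x,dh)$. Substituting $\tilde{k}^{\epsilon}=\frac{\overline{n}_x^1(0)}{\tau}k^{\epsilon}+\bigl(1-\frac{\overline{n}_x^1(0)}{\tau}\bigr)\delta_0$, the $\delta_0$ contribution vanishes because $\varphi(x+0)-\varphi(x)=0$, the factors $\tau$ cancel, and one recovers exactly $\frac{\overline{n}_x^1(0)}{\epsilon}\int(\varphi(x+h)-\varphi(x))k^{\epsilon}(x,dh)=L^{\epsilon}\varphi(x)$. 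Thus $X^{\epsilon}$ and $Y^{\epsilon}(\Lambda^{\epsilon})$ share the same infinitesimal generator. To conclude I would invoke uniqueness: both are càdlàg, start from the same initial law, and solve the martingale problem for $L^{\epsilon}$; since the jump rate is uniformly bounded by $\tau/\epsilon$, the operator $L^{\epsilon}$ is bounded on bounded measurable functions and the associated martingale problem is well-posed, so the two laws coincide.

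The main obstacle is the probability-kernel check of the second paragraph, and specifically the nonnegativity of the atom at $0$ in $k^{\epsilon}$; everything else (the generator identity and the uniqueness step) is formal once the kernel is valid. That nonnegativity rests entirely on the uniform control of the normalized fitness increments, i.e.\ on the boundedness of $\nabla\lambda$ on $U_1\cup U_2$ from Proposition \ref{pr:proprietesmalthus}, together with the compact support of the mutation law $k$.
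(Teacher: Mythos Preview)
The paper does not supply its own proof of this lemma: it simply cites \cite{ethier2009markov}, Chapter~4, Section~2, p.~163, which is precisely the uniformization (Poisson embedding) construction you reproduce. Your argument is therefore the same as the paper's (outsourced) proof; the only point worth tightening is the assertion that the $\mu$-average of $(\lambda(x+\epsilon h)-\lambda(x))/\epsilon$ ``stays below $1$'', which does not follow from mere boundedness of $\nabla\lambda$ but does follow once you combine the explicit bound of Proposition~\ref{pr:proprietesmalthus}(iii) with $\mu(\R_+^2)=1/2$ and $|h|\le 1$ on the support of $k$.
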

Then we are led to study the Markov chain $(Y^{\epsilon}(k),k\geq 0)$. We first define the drift of the Markov chain $Y^{\epsilon}$ by 
\begin{equation*}
g_\epsilon(x)=\mathbb{E}\left[Y^{\epsilon}(1)-Y^{\epsilon}(0)\vert Y^{\epsilon}(0)=x\right].
\end{equation*}
A simple calculation gives us 
\begin{align*}
\forall i \in \lbrace 1,2\rbrace,\quad &\forall x\in U_i,\quad g_{\epsilon}(x)=\frac{\epsilon \overline{n}_x^1(0)}{2\tau}\int_{\R_+}\frac{\lambda(x+\epsilon(h)_i)-\lambda(x)}{\epsilon}k(h) (h)_i dh \nonumber\\
&\forall x\in\mathcal{H},\quad g_\epsilon(x)=0.
\end{align*} 
Then, we write the Markov chain $Y^{\epsilon}$ as a stochastic approximation algorithm
\begin{equation*}
Y^{\epsilon}(k+1)=Y^{\epsilon}(k) + \frac{\epsilon}{\tau}U^{\epsilon}(k) + g^{\epsilon}(Y^{\epsilon}(k))
\end{equation*}
where $U^{\epsilon}$ is a martingale difference sequence. Assumptions of Theorem \ref{th:gastgaujal}  are clearly satisfied. In order to apply it, we compute the following set-valued map
\begin{equation*}
\forall x\in\mathcal{V},\quad H(x)=\text{conv}\left\lbrace \text{acc}_{\epsilon\rightarrow 0}\frac{\tau g_{\epsilon}(x^{\epsilon})}{\epsilon}:x^{\epsilon}\rightarrow x\right\rbrace
\end{equation*}
where $\text{conv}(A)$ denotes the smallest convex set which contains $A$ and $\text{acc}_{\epsilon\rightarrow 0}\tau g_{\epsilon}(x^{\epsilon})/\epsilon$ is the set of accumulation points of the sequence $\tau g_{\epsilon}(x^{\epsilon})/\epsilon$ as $\epsilon$ tends to zero. 
\begin{lemm}\label{le:mapset}
The set-valued map $H$ satisfies
\begin{align*}
\forall i \in\lbrace 1,2\rbrace,\quad &\forall x\in U_i,\quad H(x)=\lbrace\left( f(x,1) \right)_i\rbrace\\
&\forall x\in\mathcal{H},\quad H(x)=\left\lbrace \alpha\left(\begin{array}{c}
f(x,u) \\ 
0
\end{array}\right)+(1-\alpha)\left(\begin{array}{c}
0 \\ 
f(x,v)
\end{array}\right):(u,v,\alpha)\in\left[0,1\right]^3\right\rbrace
\end{align*}
where 
\begin{equation*}
f(x,u)=\frac{e^{-\lambda(x)(x_b\wedge x_d)}}{G(x)}\frac{\overline{n}_x^1(0)}{2}\left(\int_{0}^{u}h^2 k(h)dh + \int_{u}^{1}u h k(h)dh\right).
\end{equation*}
\end{lemm}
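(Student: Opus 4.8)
I would handle the two cases $x\in U_i$ and $x\in\mathcal H$ separately, exploiting throughout that by \eqref{eq:malthus} the Malthusian parameter depends on the trait only through $x_b\wedge x_d$: write $\lambda(x)=\Lambda(x_b\wedge x_d)$, where $\Lambda\colon(1,\infty)\to(0,1)$ is the smooth increasing function determined by $\int_0^m e^{-\Lambda(m)a}\,da=1$, so that $\Lambda'(m)=e^{-\Lambda(m)m}\big/\int_0^m a e^{-\Lambda(m)a}\,da$, which coincides with $e^{-\lambda(x)(x_b\wedge x_d)}/G(x)$ on $\mathcal V$. I shall use repeatedly that $x\mapsto\overline n_x^1(0)$ is continuous and bounded, that $g_\epsilon$ vanishes on $\mathcal H$, and that on $U_i$ one has $\tau g_\epsilon(x)/\epsilon=\bigl(\tfrac{\overline n_x^1(0)}2\int_{0}^{1}h\,\tfrac{\lambda(x+\epsilon(h)_i)-\lambda(x)}{\epsilon}k(h)\,dh\bigr)_i$.

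Consider first $x\in U_i$. Since $U_i$ is open, for any sequence $x^\epsilon\to x$ the whole segment from $x^\epsilon$ to $x^\epsilon+\epsilon(h)_i$ lies, for $\epsilon$ small and $h\in[0,1]$, in a fixed compact subset of $U_i$ on which $\lambda$ is $C^1$ with $\nabla\lambda$ Lipschitz (Proposition~\ref{pr:proprietesmalthus}). Hence $\tfrac{\lambda(x^\epsilon+\epsilon(h)_i)-\lambda(x^\epsilon)}{\epsilon}=\int_0^1(h)_i\cdot\nabla\lambda(x^\epsilon+s\epsilon(h)_i)\,ds$ converges, uniformly in $h\in[0,1]$, to $h\,\tfrac{e^{-\lambda(x)(x_b\wedge x_d)}}{G(x)}$ by \eqref{eq:gradmalthus}. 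Dominated convergence (the integrand is bounded because $\nabla\lambda$ is bounded and $k$ integrable) together with $\overline n^1_{x^\epsilon}(0)\to\overline n^1_x(0)$ gives $\tau g_\epsilon(x^\epsilon)/\epsilon\to\bigl(\tfrac{\overline n_x^1(0)}{2}\tfrac{e^{-\lambda(x)(x_b\wedge x_d)}}{G(x)}\int_0^1 h^2k(h)\,dh\bigr)_i=(f(x,1))_i$, cf.\ \eqref{eq:gradfit}. This limit is the same for every approximating sequence, so $\mathrm{acc}_{\epsilon\to0}\,\tau g_\epsilon(x^\epsilon)/\epsilon=\{(f(x,1))_i\}$ and $H(x)=\{(f(x,1))_i\}$.

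Now let $x=(\xi,\xi)\in\mathcal H$. For the inclusion $\supseteq$ I would produce explicit approximating sequences. The constant sequence $x^\epsilon\equiv x$ gives $g_\epsilon\equiv0$, so $0\in H(x)$. Given $u\in(0,1]$, set $x^\epsilon=(\xi,\xi+\epsilon u)\in U_1$; then $x^\epsilon_b\wedge x^\epsilon_d=\xi$ and $\lambda(x^\epsilon+\epsilon(h)_1)-\lambda(x^\epsilon)=\Lambda\bigl((\xi+\epsilon h)\wedge(\xi+\epsilon u)\bigr)-\Lambda(\xi)$, which after division by $\epsilon$ tends, uniformly in $h\in[0,1]$, to $h\,\Lambda'(\xi)$ on $\{h\le u\}$ and to $u\,\Lambda'(\xi)$ on $\{h>u\}$. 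Inserting this into the integral formula for $\tau g_\epsilon(x^\epsilon)/\epsilon$ and using $\overline n^1_{x^\epsilon}(0)\to\overline n^1_x(0)$ yields the limit $\bigl(\tfrac{\overline n^1_x(0)}{2}\Lambda'(\xi)(\int_0^u h^2k(h)\,dh+u\int_u^1 hk(h)\,dh)\bigr)_1=(f(x,u))_1$; for $u=0$ the sequence $x^\epsilon=(\xi,\xi+\epsilon^2)$ gives $0=(f(x,0))_1$. Symmetrically, $x^\epsilon=(\xi+\epsilon v,\xi)\in U_2$ produces $(f(x,v))_2$. Hence $\{(f(x,u))_1:u\in[0,1]\}\cup\{(f(x,v))_2:v\in[0,1]\}\subset H(x)$, and since $H(x)$ is convex it contains the whole set $\{\alpha(f(x,u))_1+(1-\alpha)(f(x,v))_2:(u,v,\alpha)\in[0,1]^3\}$.

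For the reverse inclusion, note first that $u\mapsto f(x,u)$ is continuous, nondecreasing (its derivative is a positive multiple of $\int_u^1 hk(h)\,dh\ge0$) and vanishes at $0$, so the claimed set is exactly the triangle $\{(s,t):s,t\ge0,\ s+t\le f(x,1)\}$, which is convex; it therefore suffices to show that every accumulation point of $\tau g_\epsilon(x^\epsilon)/\epsilon$, along any $x^\epsilon\to x$, lies in it. Passing to a subsequence, $x^\epsilon$ eventually lies in exactly one of $\mathcal H$ (where the value is $0$), $U_1$, or $U_2$. In the $U_1$ case, put $u_\epsilon=(x^\epsilon_d-x^\epsilon_b)/\epsilon>0$ and extract a further subsequence with $u_\epsilon\wedge1\to u^*\in[0,1]$; splitting the defining integral over $\{h\le u_\epsilon\}$ and $\{h>u_\epsilon\}$ and using $\lambda(x^\epsilon+\epsilon(h)_1)-\lambda(x^\epsilon)=\Lambda\bigl((x^\epsilon_b+\epsilon h)\wedge x^\epsilon_d\bigr)-\Lambda(x^\epsilon_b)$, the uniform convergence $(\Lambda(x^\epsilon_b+\epsilon h)-\Lambda(x^\epsilon_b))/\epsilon\to h\Lambda'(\xi)$ and $\overline n^1_{x^\epsilon}(0)\to\overline n^1_x(0)$, one gets the limit $(f(x,u^*))_1$; the $U_2$ case is symmetric and gives $(f(x,v^*))_2$. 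This proves $\subseteq$ and hence the lemma. I expect the case $x\in\mathcal H$ to be the main obstacle, the crux being the bookkeeping of the ratio $u_\epsilon=(x^\epsilon_d-x^\epsilon_b)/\epsilon$ (and its $U_2$ analogue): it measures the rate at which an approaching sequence reaches the diagonal compared with the jump size $\epsilon$, and it is precisely what generates the one-parameter family $f(x,\cdot)$ and, after convex combination of the horizontal and vertical contributions, the full set-valued map $H$ on $\mathcal H$.
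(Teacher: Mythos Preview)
Your argument is correct and follows the same overall strategy as the paper: uniform convergence of $\tau g_\epsilon/\epsilon$ on compacts of $U_i$ for the first case, and for $x\in\mathcal H$ explicit approximating sequences for the inclusion $\supseteq$ together with a subsequence argument for $\subseteq$. The only notable differences are cosmetic: you approach the diagonal via $x^\epsilon=(\xi,\xi+\epsilon u)\in U_1$ whereas the paper uses $x^\epsilon=x-\epsilon(u)_i$, and your reverse inclusion on $\mathcal H$ (tracking $u_\epsilon=(x^\epsilon_d-x^\epsilon_b)/\epsilon$ and identifying the claimed set as the triangle $\{(s,t):s,t\ge0,\ s+t\le f(x,1)\}$) is more explicit than the paper's, which only records the bound \eqref{eq:ineg} and then asserts that the inclusion follows ``easily''.
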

\begin{proof}
Let $i\in\lbrace 1,2\rbrace$. Let $K$ be a compact subset of $U_i$. Let $\delta>0$. We fix $\epsilon_0>0$ such that for all $x\in K$, $\epsilon <\epsilon_0$ and $h\in\left[0,1\right]$, we have $x+\epsilon(h)_i\in U_i$. The map $\lambda$ is differentiable on $U_i$. Hence, for all $(x,\epsilon,h)\in K\times\left[0,\epsilon_0\right]\times\left[0,1\right]$, there exists $\theta\in \left[x,x+\epsilon (h)_i\right]$ such that 
\begin{equation*}
\lambda(x+\epsilon (h)_i)-\lambda(x) = \epsilon (h)_i.\nabla \lambda(\theta).
\end{equation*}
Let $x\in K$, we have
\begin{align*}
\left\Vert \frac{\tau g^{\epsilon}(x)}{\epsilon} - (f(x,1))_i\right\Vert & =\frac{\overline{n}^1_x(0)}{2} \left\vert \int_{\R_+}\left[\nabla \lambda(\theta)\right]_i h^2 k(h)dh - \left[\nabla \lambda(x)\right]_i\int_{\R_+}h^2 k(h)dh\right\vert \\
&\leq \frac{\sup_{x\in\mathcal{V}}\overline{n}_x^1(0)}{2}\int_{\R_+}\vert \left[\nabla \lambda(\theta)\right]_i - \left[\nabla \lambda(x)\right]_i\vert h^2 k(h)dh.
\end{align*}
By Proposition \ref{pr:proprietesmalthus}, the map $\nabla \lambda$ is Lipschitz on $U_i$ with some Lipschitz constant $C$. We deduce that
\begin{equation*}
\left\Vert \frac{\tau g^{\epsilon}(x)}{\epsilon} - (f(x,1))_i\right\Vert\leq \frac{\epsilon \sup_{x\in\mathcal{V}}\overline{n}_x^1(0)C}{2}\int_{\R_+}h^3k(h)dh.
\end{equation*}
We obtain that $\tau g^{\epsilon}/\epsilon$ converges uniformly on all compact subsets of $U_i$ and we conclude that for all $x\in U_i$, $H(x)=(f(x,1))_i$.
\\
Let $x\in \mathcal{H}$. We first show that 
\begin{equation}\label{eq:egsets}
\left\lbrace \text{acc}_{\epsilon\rightarrow 0}\frac{\tau g_{\epsilon}(x^{\epsilon})}{\epsilon}:x^{\epsilon}\rightarrow x\right\rbrace = \bigcup_{i\in\lbrace 1,2\rbrace}\left\lbrace (f(x,u))_i:u\in\left[0,1\right]\right\rbrace.
\end{equation}
If the fitness gradient was smooth (i.e Lipschitz), the sets in (\ref{eq:egsets}) would be reduced to one element and we would be in the cases studied in \cite{champagnat2002canonical}, \cite{champagnat2011polymorphic} or \cite{meleard2009trait}. We prove the inclusion from right to left. Let $i\in\lbrace 1, 2\rbrace$, let $u\in\left[0,1\right]$, we define the sequence $x^{\epsilon}= x -\epsilon(u)_i$. We have
\begin{align*}
\frac{\tau g^{\epsilon}(x^{\epsilon})}{\epsilon}& =\left(\int_{\R_+}\frac{\lambda(x -\epsilon(u)_i +\epsilon (h)_i) -\lambda(x-\epsilon(u)_i)}{\epsilon}k(h)(h)_i dh\right)\frac{\overline{n}_{x^{\epsilon}}^1(0)}{2}\\
&=\left(\int_{0}^{u}\frac{\lambda(x -\epsilon(u)_i +\epsilon (h)_i) -\lambda(x-\epsilon(u)_i)}{\epsilon}k(h)(h)_i dh \right.\\
& \left. + \int_{u}^{1}\frac{\lambda(x -\epsilon(u)_i +\epsilon (h)_i) -\lambda(x-\epsilon(u)_i)}{\epsilon}k(h)(h)_i dh\right)\frac{\overline{n}_{x^{\epsilon}}^1(0)}{2}.
\end{align*}
For all $h\in\left[0,u\right]$, we have $\left[x-\epsilon(u)_i,x - \epsilon(u-h)_i\right]\subset U_i$. So we can find $\theta \in \left[x-\epsilon(u)_i,x - \epsilon(u-h)_i\right]$ such that $\lambda(x -\epsilon(u-h)_i) -\lambda(x-\epsilon(u)_i) = \epsilon (h)_i.\nabla \lambda(\theta)$. By Proposition \ref{pr:proprietesmalthus} (ii), we deduce that 
\begin{equation*}
\int_{0}^{u}\frac{\lambda(x -\epsilon(u)_i +\epsilon (h)_i) -\lambda(x-\epsilon(u)_i)}{\epsilon}\text{ }k(h)(h)_i dh \rightarrow \frac{e^{-\lambda(x) (x_b\wedge x_d)}}{G(x)} \int_{0}^{u}(h^2)_i\text{ }k(h)dh
\end{equation*}
as $\epsilon$ tends to zero. For all $h\in\left[u,1\right]$ we have $\lambda(x +\epsilon(h-u)_i)=\lambda(x)$. We deduce similarly that 
\begin{equation*}
\int_{u}^{1}\frac{\lambda(x -\epsilon(u)_i +\epsilon (h)_i) -\lambda(x-\epsilon(u)_i)}{\epsilon}\text{ }k(h)(h)_i dh \rightarrow \frac{e^{-\lambda(x) (x_b\wedge x_d)}}{G(x)}\int_u^1 (h)_i u\text{ }k(h)dh
\end{equation*} 
as $\epsilon$ tends to zero. We conclude the proof of the first inclusion arguing that $\overline{n}_{x^\epsilon}^1(0)\rightarrow \overline{n}_x^1(0)$ as $\epsilon$ tends to zero. \\We prove the inclusion from left to right. Let $x\in \mathcal{V}$. If $x\in\mathcal{H}$ then we have 
\begin{equation}\label{eq:eg}
g^{\epsilon}(x)=0.
\end{equation} 
If $x\in U_i$, for some $i\in\lbrace 1,2\rbrace$, then we have
\begin{align*}
\frac{\tau g^{\epsilon}(x)}{\epsilon}=\frac{\overline{n}_{x}^1(0)}{2}\int_{0}^{1}\frac{\lambda(x+\epsilon (h)_i)-\lambda(x)}{\epsilon}k(h)(h)_i dh
\end{align*}
Moreover for all $\epsilon,h$ there exists $\theta\in\left[x,x+\epsilon(h)_1\right]$ such that $\lambda(x+\epsilon (h)_i)-\lambda(x)\leq \frac{\partial \lambda(\theta)}{\partial x_i}\epsilon h$. We deduce that 
\begin{equation}\label{eq:ineg}
\left[\frac{\tau g^{\epsilon}(x)}{\epsilon}\right]_i\leq \frac{\overline{n}_{x}^1(0)}{2}\int_{0}^{1}\frac{\partial \lambda(\theta)}{\partial x_i} h^2 k(h) dh
\end{equation}
From (\ref{eq:eg}) and (\ref{eq:ineg}), we deduce easily the second inclusion in (\ref{eq:egsets}). We conclude that
\begin{equation*}
\text{conv}\left\lbrace \text{acc}_{\epsilon\rightarrow 0}\frac{\tau g_{\epsilon}(x^{\epsilon})}{\epsilon}:x^{\epsilon}\rightarrow x\right\rbrace = \text{conv}\left\lbrace\cup_{i\in\lbrace 1,2\rbrace}\left\lbrace (f(x,u))_i:u\in\left[0,1\right]\right\rbrace\right\rbrace=H(x).
\end{equation*} 
\end{proof}
In Lemma \ref{le:eqmapset}, we prove that differential inclusions associated with $H$ and $F$ have identical solutions. Before, we give a technical lemma.
\begin{lemm}\label{lem:tech}
Let $x\in\mathcal{H}$. Let $(u,v,\alpha)\in\left[0,1\right]^3$ and let 
\begin{equation*}
m=\alpha \left(\begin{array}{c}
f(x,u) \\ 
0
\end{array}\right) +(1-\alpha)\left(\begin{array}{c}
0 \\ 
f(x,v)
\end{array}\right)\in H(x).
\end{equation*}
Assume that $m\notin F(x)$. Then we have $\alpha  f(x,u)\neq (1-\alpha) f(x,v)$.
\end{lemm}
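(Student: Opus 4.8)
The plan is to argue by contraposition: I will assume that $\alpha f(x,u) = (1-\alpha) f(x,v)$ and deduce that $m \in F(x)$, contradicting the hypothesis $m \notin F(x)$. Recall from \eqref{eq:mapvaluedset} that, for $x \in \mathcal{H}$, the set $F(x)$ is exactly $\{\tfrac12(f(x,w),f(x,w)) : w \in [0,1]\}$. Setting $c := \alpha f(x,u) = (1-\alpha) f(x,v)$, the vector $m$ equals $(c,c)$, so it suffices to produce some $w \in [0,1]$ with $f(x,w) = 2c$; then $\tfrac12(f(x,w),f(x,w)) = (c,c) = m \in F(x)$, which is the desired contradiction.

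The first step is to describe the range of $w \mapsto f(x,w)$ on $[0,1]$ for fixed $x\in\mathcal{H}$. By \eqref{eq:gradfit}, $f(x,w) = C(x)\,\phi(w)$, where $C(x) := \frac{e^{-\lambda(x)(x_b\wedge x_d)}}{G(x)}\,\frac{\overline{n}_x^1(0)}{2}$ is a strictly positive constant (since $G(x) > 0$ and $\overline{n}_x^1(0) > 0$), and $\phi(w) := \int_0^w h^2 k(h)\,dh + w\int_w^1 h\,k(h)\,dh$. A one-line differentiation under the integral sign gives $\phi'(w) = \int_w^1 h\,k(h)\,dh \geq 0$, using that $k$ is a probability density supported in $[-1,1]$; hence $\phi$ is continuous and nondecreasing on $[0,1]$ with $\phi(0) = 0$, so $w \mapsto f(x,w)$ is continuous, nondecreasing, and maps $[0,1]$ onto $[0, f(x,1)]$.

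The second and essential step is to check that $2c$ belongs to this interval, i.e. $0 \le 2c \le f(x,1)$. Nonnegativity is clear. For the upper bound, since $f(x,\cdot) \le f(x,1)$ on $[0,1]$, the two representations of $c$ give $c = \alpha f(x,u) \le \alpha f(x,1)$ and $c = (1-\alpha) f(x,v) \le (1-\alpha) f(x,1)$; adding these two inequalities yields $2c \le f(x,1)$. By the intermediate value theorem there is then $w \in [0,1]$ with $f(x,w) = 2c$, and the contradiction follows as in the first paragraph.

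I do not anticipate a genuine obstacle here: the argument is elementary once one observes that the equality $\alpha f(x,u) = (1-\alpha) f(x,v)$ is precisely what forces both sides to be at most $\tfrac12 f(x,1)$, which is exactly what makes $2c$ attainable as a value of the monotone function $f(x,\cdot)$. The only computational point is the monotonicity of $\phi$, which is immediate from the differentiation above.
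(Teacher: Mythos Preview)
Your proof is correct and follows essentially the same contrapositive route as the paper: both arguments set the common value $c=\alpha f(x,u)=(1-\alpha)f(x,v)$, observe that $m=(c,c)$, and then show that $2c$ lies in the range $[0,f(x,1)]$ of the continuous monotone map $w\mapsto f(x,w)$. The only cosmetic difference is that the paper writes $2c$ explicitly as the harmonic mean $\frac{2f(x,u)f(x,v)}{f(x,u)+f(x,v)}$ and bounds it by $\max(f(x,u),f(x,v))\le f(x,1)$, whereas you obtain the sharper and cleaner bound $2c\le f(x,1)$ directly from $c\le \alpha f(x,1)$ and $c\le (1-\alpha)f(x,1)$.
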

\begin{proof}
We prove the lemma by contradiction. Assume that $\alpha  f(x,u)= (1-\alpha) f(x,v)$. Then we obtain 
\begin{equation*}
m=\frac{1}{2}\left(\begin{array}{c}
\frac{2f(x,u)f(x,v)}{f(x,u)+f(x,v)}\\ 
\frac{2f(x,u)f(x,v)}{f(x,u)+f(x,v)}\end{array}\right).
\end{equation*}
We assume without loss of generality that $f(x,u)\leq f(x,v)$. Then we obtain $0\leq\frac{2f(x,u)f(x,v)}{f(x,u)+f(x,v)}\leq f(x,v)$. Finally we remark that the map $s\mapsto f(x,s)$ is a bijection from $\left[0,1\right]$ to $\left[0,f(x,1)\right]$. Hence there exists $w\in\left[0,1\right]$ such that $f(x,w)=\frac{2f(x,u)f(x,v)}{f(x,u)+f(x,v)}$ that allows us to obtain the contradiction.
\end{proof}
\begin{lemm}\label{le:eqmapset}
Any solution of 
\begin{equation}\label{eq:difincF}
\begin{cases}
\frac{\text{d}x(t)}{\text{dt}}\in F(x(t)),\quad t\in\left[0,T\right]\\
x(0)=x^{0}.
\end{cases}
\end{equation}
is a solution of 
\begin{equation}\label{eq:difincH}
\begin{cases}
\frac{\text{d}x(t)}{\text{dt}}\in H(x(t)),\quad t\in\left[0,T\right]\\
x(0)=x^{0}.
\end{cases}
\end{equation}
and conversely.
\end{lemm}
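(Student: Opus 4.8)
The plan is to establish the two inclusions of solution sets separately; the first is immediate and the second rests on one genuine measure-theoretic ingredient. For the easy direction, I would first record that $F(x)\subseteq H(x)$ for every $x\in\mathcal{V}$: on each $U_i$ the two sets are literally the same singleton $\{(f(x,1))_i\}$, while on $\mathcal{H}$ the generic element $\tfrac12(f(x,u),f(x,u))$ of $F(x)$ is recovered from the description of $H(x)$ by taking $\alpha=\tfrac12$ and $v=u$. Hence if $x(\cdot)$ solves \eqref{eq:difincF} then $\dot x(t)\in F(x(t))\subseteq H(x(t))$ for almost every $t$, so $x(\cdot)$ solves \eqref{eq:difincH}.

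For the converse, let $x:[0,T]\to\mathcal{V}$ be absolutely continuous with $x(0)=x^{0}$ and $\dot x(t)\in H(x(t))$ for a.e. $t$; the goal is $\dot x(t)\in F(x(t))$ a.e. Using continuity of $x$, decompose $[0,T]=x^{-1}(U_1)\cup x^{-1}(U_2)\cup E$ where $E:=x^{-1}(\mathcal{H})$; on $x^{-1}(U_i)$ one has $H(x(t))=F(x(t))$, so there $\dot x(t)\in F(x(t))$ wherever $\dot x(t)$ exists, and the whole question concentrates on $E$. The key point is that $\phi(t):=x_b(t)-x_d(t)$ is absolutely continuous and $E=\phi^{-1}(\{0\})$; by the Lebesgue density theorem almost every point of $E$ is a density point of $E$, and at a density point of a level set where an absolutely continuous (hence a.e. differentiable) function is differentiable its derivative vanishes. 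Therefore $\dot x_b(t)=\dot x_d(t)$ for a.e. $t\in E$.

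Now fix such a $t\in E$, taken moreover in the co-null set where $\dot x(t)$ exists and lies in $H(x(t))$. Writing the membership $\dot x(t)\in H(x(t))$ componentwise as $\dot x_b(t)=\alpha f(x(t),u)$ and $\dot x_d(t)=(1-\alpha)f(x(t),v)$ for suitable $(u,v,\alpha)\in[0,1]^3$, the equality $\dot x_b(t)=\dot x_d(t)$ becomes $\alpha f(x(t),u)=(1-\alpha)f(x(t),v)$, and the contrapositive of Lemma \ref{lem:tech} forces $\dot x(t)\in F(x(t))$. Combining the three parts gives $\dot x(t)\in F(x(t))$ for a.e. $t\in[0,T]$, together with $x(0)=x^{0}$, i.e. $x(\cdot)$ solves \eqref{eq:difincF}. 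The only step that is not pure bookkeeping is the assertion that the derivative of an absolutely continuous function vanishes almost everywhere on its zero level set; I expect that — and the accompanying care in intersecting the various almost-everywhere statements over the subsets $x^{-1}(U_i)$ and $E$ — to be the one place where attention is required.
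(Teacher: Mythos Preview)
Your proof is correct, and it takes a genuinely different route from the paper's.  Both arguments share the easy direction $F\subseteq H$ and both invoke Lemma~\ref{lem:tech} at the crucial moment, but the mechanism for reducing the problem on $E=x^{-1}(\mathcal H)$ differs.  You argue directly: $\phi=x_b-x_d$ is absolutely continuous, and the standard fact that $\dot\phi=0$ almost everywhere on $\phi^{-1}(\{0\})$ (indeed, at every accumulation point of this level set where $\dot\phi$ exists the difference quotients along a sequence in the level set vanish) gives $\dot x_b=\dot x_d$ a.e.\ on $E$; then the contrapositive of Lemma~\ref{lem:tech} forces membership in $F$.  The paper instead argues by contradiction from a single point $t_0$ with $\dot x(t_0)\in H(x(t_0))\setminus F(x(t_0))$, uses Lemma~\ref{lem:tech} to get $\dot x_b(t_0)\neq\dot x_d(t_0)$, and then exploits the specific off-diagonal structure of $H$ (that on $U_1$ the second component vanishes while $x_b$ is nondecreasing) to reach a contradiction on a small interval $(t_0,t_1)$.

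Your approach is shorter and more robust: it uses only that $H=F$ off the diagonal and that Lemma~\ref{lem:tech} controls the diagonal case, without appealing to the sign structure of $H$ on $U_1\cup U_2$ or to monotonicity of the coordinates.  The paper's approach, by contrast, makes the geometric picture (the trajectory being pushed back toward the diagonal) more explicit, at the cost of a slightly more delicate nested contradiction where one must be careful that the exceptional point $s_0$ lies in the full-measure set on which the inclusion $\dot x\in H(x)$ actually holds.
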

\begin{proof}
For all $x\in\mathcal{V}$, we have $F(x)\subset H(x)$. We deduce that if $(x(t),t\in\left[0,T\right])$ is a solution of (\ref{eq:difincF}) then it is a solution of (\ref{eq:difincH}). Assume conversely that there exists a solution $(x(t),t\in\left[0,T\right])$ of (\ref{eq:difincH}) which is not a solution of (\ref{eq:difincF}). We deduce that there exists $t_0\in\left[0, T\right]$ such that $x$ is differentiable at $t_0$, $\text{d}x(t_0)/\text{dt}\in H(x(t_0))$ and $\text{d}x(t_0)/\text{dt}\notin F(x(t_0))$. Then we have  $x(t_0)\in\mathcal{H}$ and $\text{d}x(t_0)/\text{dt}\notin F(x(t_0))$.

We now deduce the contradiction. By Lemma \ref{lem:tech}, we obtain that
$\ \frac{\text{d}x_b(t_0)}{\text{d}t}\neq \frac{\text{d}x_d(t_0)}{\text{d}t}$. 
Without loss of generality, we may assume that 
$\, 
\frac{\text{d}x_b(t_0)}{\text{d}t} < \frac{\text{d}x_d(t_0)}{\text{d}t}$.
Since $x_b(t_0)=x_d(t_0)$, there exists an interval $\left]t_0,t_1\right[$ such that for all $s\in\left]t_0,t_1\right[$, $x(s)\in U_1$. Assume that for all $s\in\left]t_0,t_1\right[$ such that $x$ is differentiable at $s$, we have 
\begin{equation}\label{eq:deriv0}
\frac{\text{d}x_d(s)}{\text{d}t}=0.
\end{equation}
The solution $x$ of the differential inclusion (\ref{eq:difincH})  is absolutely continuous. Hence for all $s\in\left]t_0, t_1\right[$,  $x_d(s)=x_d(t_0)=x_b(t_0)$. Since  $x(s)\in U_1$, we obtain that $x_b(s)<x_b(t_0)$. It is absurd since $x_b$ is non-decreasing and hence it contradicts (\ref{eq:deriv0}). So, there exists $s_0\in\left]t_0,t_1\right[$ such that $x(s_0)\in U_1$, $x$ is differentiable at $s_0$  and satisfies
$\ 
\frac{\text{d}x_d(s_0)}{\text{d}t}>0$.
However, $x$ is a solution of (\ref{eq:difincH}) that leads to the final contradiction.
\begin{figure}[!h]
\begin{center}
\includegraphics[scale=0.2]{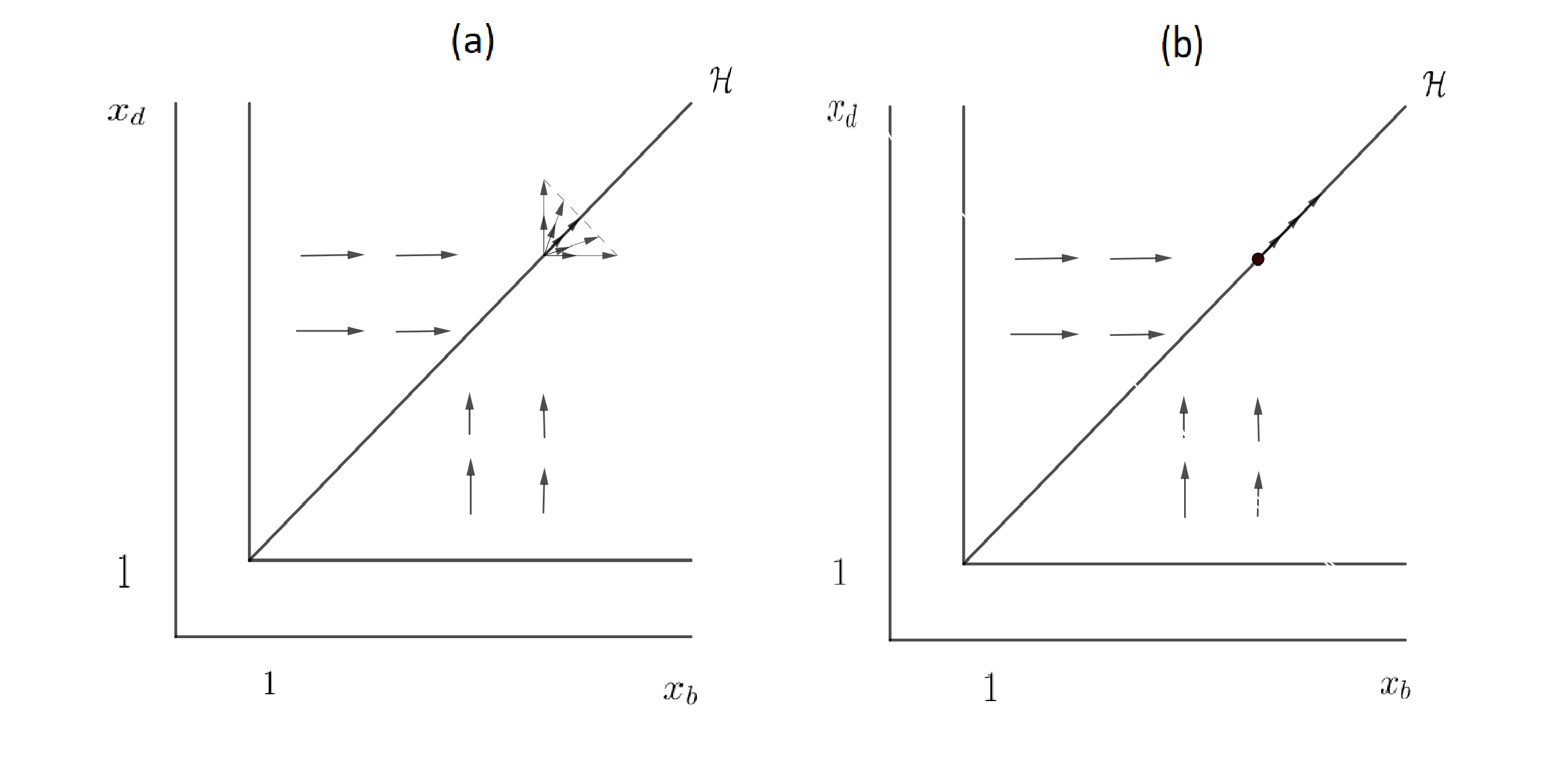}
\caption{(a): Representation of $H$. (b): Representation of $F$.}
\label{fig:dessinFH}
\end{center}
\end{figure}
\end{proof}
We now give the proof of Theorem \ref{theo:diffinc}. It is a direct consequence of \cite[Theorem 1]{gast2012markov} recalled in Appendix A.3.
\begin{proof}[of Theorem \ref{theo:diffinc}]
Let $T>0$ be fixed. By Lemma \ref{le:eqmapset} and Theorem \ref{th:gastgaujal}, we deduce that for all $\delta>0$ we have
\begin{equation}\label{eq:prtheo}
\lim_{\epsilon\rightarrow 0}\mathbb{P}\left( \inf_{x\in \mathcal{S}_F(T,x^{0})}\sup_{t\in\left[0,T\right]}\vert Y^{\epsilon}(\lfloor t\tau/\epsilon\rfloor) -x(t)\vert >\delta\right) = 0.
\end{equation}
We conclude by using similar arguments as in the proof of \cite[Theorem 4]{gast2012markov}. Since $\Lambda^{\epsilon}$ is a Poisson process with parameter $\tau/\epsilon$, we obtain that for all $\delta>0$,
\begin{equation}\label{th:pr3}
\mathbb{P}\left(\sup_{t\leq T}\vert \Lambda^{\epsilon}(t) -\frac{t\tau}{\epsilon}\vert \geq \frac{\tau\delta}{\epsilon}\right)\leq \frac{T\epsilon}{\tau \delta}.
\end{equation}
We have $\mathbb{P}\left(\inf_{x\in \mathcal{S}_F(T,x^0)}\sup_{t\leq T}\vert X^{\epsilon}(t) - x(t)\vert > \delta\right)$
\begin{align}\label{th:pro2}
&=\mathbb{P}\left(\inf_{x\in \mathcal{S}_F(T,x^0)}\sup_{t\leq T}\vert Y^{\epsilon}(\Lambda^{\epsilon}(t)) - x(t)\vert > \delta\right)\nonumber\\
&\leq \mathbb{P}\left(\inf_{x\in \mathcal{S}_F(T,x^0)}\sup_{t\leq T}\left\lbrace\vert Y^{\epsilon}(\Lambda^{\epsilon}(t))-x\left(\frac{\epsilon \Lambda^{\epsilon}(t)}{\tau}\right)\vert + \vert x\left(\frac{\epsilon \Lambda^{\epsilon}(t)}{\tau}\right) -x(t)\vert \right\rbrace>\delta\right).
\end{align}
Let $x\in\mathcal{S}_F(T,x^{0})$ be a solution of the differential inclusion (\ref{eq:difinc}). For almost all $t\in\left[0,T\right]$, we have $\text{d}x(t)/\text{d}t\in F(x(t))$. Since $\sup_{x\in\mathcal{V}}\sup\lbrace F(x)\rbrace < +\infty$, we deduce that there exists $C_T>0$ such that for all $y\in\mathcal{S}_F(T,x^{0})$, for all $s,t\in \left[0,T\right]$, $\vert y(t) - y(s)\vert \leq C_T \vert t - s\vert$. We deduce that (\ref{th:pro2}) is less than
\begin{align*}
&\mathbb{P}\left(\inf_{x\in \mathcal{S}_F(T,x^0)}\left\lbrace\sup_{t\leq T}\vert Y^{\epsilon}(\Lambda^{\epsilon}(t))-x\left(\frac{\epsilon \Lambda^{\epsilon}(t)}{\tau}\right)\vert\right\rbrace +  C_T \sup_{t\leq T}\vert \frac{\epsilon \Lambda^{\epsilon}(t)}{\tau} - t\vert >\delta\right)\\
&\leq \mathbb{P}\left(\inf_{x\in \mathcal{S}_F(T,x^0)}\sup_{t\leq T}\vert Y^{\epsilon}(\Lambda^{\epsilon}(t))-x\left(\frac{\epsilon \Lambda^{\epsilon}(t)}{\tau}\right)\vert>\delta\right) +  \mathbb{P}\left( C_T \sup_{t\leq T}\vert \frac{\epsilon \Lambda^{\epsilon}(t)}{\tau} - t\vert >\delta\right)
\end{align*}
and we conclude by using (\ref{eq:prtheo}) and (\ref{th:pr3}).
\end{proof}

\section{Discussion}
\subsection{General discussion and comments}
In the present article, we study the evolution of a population with a trait structure describing a simple class of life-histories. We build a stochastic individual-based model in a framework that is continuous in time, age and trait. The trait is a pair of parameters $(x_b,x_d)$ characterising the age at end-of-reproduction $x_b$ and the age at transition to a non-zero mortality risk $x_d$.  The model sees two origins of phenotypic variation. First, the genetic mutations that are supposed to be rare and modify the traits symmetrically - equal probability to increase or decrease the value of the parameter. Second, we model the Lansing effect which can be considered as an epigenetic mutation affecting the progeny of an “old” individual. It is acting on a much faster time-scale - one generation - than genetic mutations and has only a negative effect on the life expectancy of the progeny. We highlight that we have chosen to model the Lansing effect by an extremely strong effect on the descendant. Indeed, it acts on each generation and degrades dramatically the life-expectancy of the progeny. Nevertheless, a more general and realistic study of epigenetic modifications on the genetic evolution has to be investigate. Some aspects of this question have been studied in \cite{klironomos2013epigenetic}. The reasoning is based on the fact that epigenetic modifications are more frequent than genetic mutations (\cite{schmitz2011transgenerational}). It would be interesting to extend adaptive dynamics tools in order to take into account epigenetics modifications (involving intermediary time scales).
\\We study the long term evolution of the trait distribution using adaptive dynamics theory. We extend the age-structured TSS to take into account the Lansing effect. The main mathematical result of the present work concerns the behaviour of the TSS when mutations are small. We show in Theorem \ref{theo:diffinc} that the behaviour of the rescaled TSS in the limit $\epsilon\rightarrow 0$ is characterised by a differential inclusion whose solutions are not unique on the diagonal $\mathcal{H}=\lbrace x_b=x_d\rbrace$. This differential inclusion allows to generalize the Canonical Equation of Adaptive Dynamics \cite{dieckmann1996dynamical},\cite{champagnat2011polymorphic} in order to consider the non-smooth fitness gradient. The proof is based on \cite{gast2012markov}. Thanks to this approach, we show that the evolution of our model, whatever its initial configuration, leads to the apparition and maintenance of configurations $(x_b,x_d)$ satisfying $x_b-x_d=0$.
\\To our knowledge, differential inclusions have never been used before in the adaptive dynamics theory. In \cite{champagnat2002canonical}, \cite{champagnat2011polymorphic},  \cite{meleard2009trait}, the fitness gradient is assumed to be a Lipschitz function, which ensures the uniqueness of the solutions. Our approach seems useful for generalizing the canonical equation to situations where the fitness gradient is neither Lipschitz nor continuous.  
\\The drift associated with the differential inclusion depends  on the fitness gradient that satisfies (see Proposition \ref{pr:proprietesmalthus}) 
\begin{equation}\label{eq:fitnessgraddiscuss}
\forall i\in\lbrace 1,2\rbrace,\quad \forall x\in U_i,\quad \nabla \lambda(x)=\left(\frac{e^{-\lambda(x)(x_b\wedge x_d)}}{G(x)}\right)_i
\end{equation}
where $\lambda(x)$ is the Malthusian parameter and $G(x)$ the mean generation time associated to the trait $x$. Hence the fitness gradient $\nabla \lambda(x)$ describes the speed of evolution of the trait $x$. It can be related to the seminal work of Hamilton (\cite{hamilton1966moulding}) on the moulding of senescence. In that article, Hamilton states that senescence is unavoidable because the \textit{strength of selection} decreases with age. To show it, he defines the strength of selection at some given age $a_0$ as the sensitivity of the Malthusian parameter with respect to some little perturbation on the birth or death intensities at age $a_0$. He concludes arguing that these quantities decrease to zero as $a_0$ tends to infinity. Similarly, formula (\ref{eq:fitnessgraddiscuss}) describes the sensitivity of the Malthusian parameter with respect to some perturbation on the duration of the reproduction or the survival phase at ages $x_b$ and $x_d$. Then, they can be interpreted as the strength of selection at ages $x_b$ and $x_d$ and describe the speed of evolution of the traits $x_b$ and $x_d$ in the canonical inclusion (\ref{eq:difinc}).

\vspace{0.2cm}
To conclude, the present article studies a  case of  birth and death model with a strong Lansing effect and constant competition applied to asexual and haploid individuals in order to validate mathematically the convergence of $x_b$ and $x_d$ observed in the numerical simulations. Further characterisation of this model is in progress, in order to better understand the influence of its different parameters on the evolution of $(x_b, x_d)$.
\\Our initial motivation for developing the bd-model was an attempt to understand whether a phenomenon leading to a dramatic decrease of an individual’s fitness could be selected through evolution with simple and no explicitely constraining trade-offs. Indeed, in the past years, Rera and collaborators have identified and characterised a dramatic transition  preceding death in drosophila (\cite{rera2011modulation}, \cite{rera2012intestinal}) as well as other organisms (\cite{rera2018smurf}, \cite{dambroise2016two}). We show here that, under uniform competition - i.e. environmental limitation equally affecting all genotypes - a mechanism coupling the end of reproductive capabilities and organismal homeostasis can and will be selected thanks to evolution. Thus, at least these two characteristics of senescent organisms can positively be selected through evolution. Concerning the biological interpretation of our model, our thesis is that individuals with a senescence mechanism associated to the Lansing effect tend to produce more genetic variants than those without senescence. Hence, these individuals could show a higher evolvability. This question is being investigated in a work in progress.
\\Here we show the positive selection of a property that limits reproduction. It is reminiscent of the article by \cite{tully2011evolution} 
who proposed a new selective mechanism for post-reproductive life span. The latter relies on the hypothesis that the post-reproductive lifespan can be selected as an insurance against indeterminacy; a longer life expectancy reducing the risk of dying by chance before the cessation of reproductive activity. In the present article, the maintenance of individuals showing Lansing effect is the counterpart of individuals with post-reproductive survival. As discussed in \cite{kirkwood1982cytogerontology}, one of August Weismann’s concepts that  persisted without changes throughout his life is a conviction that “life is endowed with a fixed duration, not because it is contrary to its nature to be unlimited, but because unlimited existence of individuals would be a luxury without any corresponding advantage” (\cite{bhl23551}). This is what we showed in the present work.

\subsection{Generalizations of the model}
\subsubsection{Lower Lansing Effect}\label{sec:lansing}
In this article, we model the Lansing effect by a very strong effect since we assume that the descendants of old individuals with trait $(x_b,x_d)$ inherits of the trait $(x_b,0)$. It could be interesting to study a more general case like $(x_b,\alpha x_d)$ for some $\alpha\in\left[0,1\right]$. 
\subsubsection{Mutation kernel}

It would be interesting to consider two different mutation kernels $k_b(x,h)$ and $k_d(x,h)$ for the traits $x_b$ and $x_d$.  This change should not modify  the behaviour of the process. Indeed, on the sides off the diagonal $(x_b \neq x_d)$, only the speed of evolution of the traits will be modified. Hence, the trait $(x_b,x_d)$ will converge to the diagonal and will evolve on it at some speed depending on the variances of the kernels $k_b$ and $k_d$. This case is being studied in an ongoing work mentionned in Section \ref{sec:lansing}.

\appendix
\section{Appendix}
\subsection{Proof of Proposition \ref{le:linearlong}}
The proof is based on classical arguments of spectral theory for strongly
continuous semi-groups. Let us denote by $(P_t)_{t\geq 0}$ the semi-group on $L^1(\R_+)^2$ associated with the infinitesimal generator $(A,D(A))$. Let us denote by $\sigma(A)$ and $\sigma_e(A)$ the spectrum and the essential spectrum of the operator $A$ respectively. Let us denote by $\alpha\left[P_t\right]$ the measure of non-compactness of $P_t$ \cite[Definition 4.14 p 165]{webb1985theory}, and define $w_1(A):=\lim_{t\rightarrow\infty}t^{-1}\log(\alpha\left[P_t\right])$. We show that there exists $\omega>0$ such that
\begin{equation}\label{eq:spectralgap}
\max\left(w_1(A),\sup_{z\in(\sigma(A)\setminus \sigma_e(A))\setminus\lbrace \lambda(x)\rbrace}\mathcal{R}e(z)\right)<\omega<\lambda(x).
\end{equation}
By using arguments similar to \cite[Section 1]{pruss1981equilibrium}, we obtain that for all $t$ large enough
\begin{equation*}
\alpha\left[P_t\right]\leq e^{-t}.
\end{equation*}
and that
\begin{equation*}
w_1(A):=\lim_{t\rightarrow\infty}t^{-1}\log(\alpha\left[P_t\right])\leq -1.
\end{equation*}
By \cite[Proposition 4.13 p 170]{webb1985theory}, we obtain that $\sigma_e(A)\subset \lbrace z\in\mathbb{C}:\mathcal{R}e(z)\leq -1\rbrace$. Let $z \in \sigma(A)\setminus \sigma_e(A)$. Then there exists a non-zero $u\in L^1(\R_+,\mathbb{C})^2$ such that
\begin{equation}\label{eq:prospec}
\begin{cases}
-u'(a) -\textbf{D}_x(a)u = z u(a)\\
u(0)=\int_{\R_+}\textbf{B}_x(\alpha)u(\alpha)d\alpha.
\end{cases}
\end{equation}
By solving the first equation in (\ref{eq:prospec}) and by injecting the result in the second equation, we obtain that $u(0)$ satisfies
\begin{equation}\label{eq:proeigenvect}
u(0)= \textbf{F}(z)u(0)
\end{equation}
where 
\begin{equation}\label{eq:F}
\textbf{F}(z) = \int_{\R_+}\textbf{B}_x(a)\exp\left( -\int_{0}^{a}(\textbf{D}_x(\alpha)+ z \textbf{I})d\alpha\right)da.
\end{equation}
Equation (\ref{eq:proeigenvect}) admits a non-trivial solution $u(0)$ if and only if $\det \left[\textbf{F}(z)-\textbf{I}\right] =0$. Since the matrix $\textbf{F}(z)$ is triangular we have
\begin{equation}\label{eq:spectsolution}
\det \left[\textbf{F}(z)-\textbf{I}\right]= (\left[\textbf{F}(z)\right]_{11} - 1)(\left[\textbf{F}(z)\right]_{22}-1)
\end{equation}
where $\left[\textbf{F}(z)\right]_{11}=\int_{0}^{x_b\wedge x_d}e^{-za}da$ and $\left[\textbf{F}(z)\right]_{22}=\int_{0}^{x_b}e^{-(1+z)a}da$.
We deduce that the Malthusian parameter $\lambda(x)$ is the largest real solution of $\det\left[\textbf{F}(z)-\textbf{I}\right]=0$. By using similar analytical arguments as in the proof of \cite[Theorem 4.10]{webb1985theory}, we deduce that there exists only finitely many $z\in\mathbb{C}$ such that $\mathcal{R}e(z)>0$ and $\det \left[\textbf{F}(z)-\textbf{I}\right]=0$ which allows us to conclude for (\ref{eq:spectralgap}). We prove that $\lambda(x)$ is a simple eigenvalue of $A$ by showing that $\lambda(x)$ is a simple zero of the equation $\det \left[\textbf{F}(z)-\textbf{I}\right]=0$. Indeed we have 
\begin{equation*}
\frac{\text{d}\det \left[\textbf{F}(\lambda(x))-\textbf{I}\right]}{\text{d}\lambda}=\frac{\text{d}\left[\textbf{F}(\lambda(x))\right]_{11}}{\text{d}\lambda}(\left[\textbf{F}(\lambda(x))\right]_{22}-1)>0.
\end{equation*}
Let $N_x$ be a principal eigenvector associated with the eigenvalue $\lambda(x)$. By (\ref{eq:prospec}), we have
\begin{equation*}
N_x^1(a)=N_x^1(0)e^{-((a-x_d)\vee 0) -\lambda(x) a},\quad N_x^2(a)=N_x^2(0) e^{-(1+\lambda(x))a}
\end{equation*}
and Equation (\ref{eq:proeigenvect}) gives that 
\begin{equation*}
N_x^2(0)= \frac{\left[\textbf{F}(\lambda(x))\right]_{21}}{1-\left[\textbf{F}(\lambda(x))\right]_{22}}N_x^1(0).
\end{equation*}
We conclude for the convergence by using  arguments similar to proof of \cite[Theorem 4.9  p187]{webb1985theory}. 
\subsection{Proof of Lemma \ref{le:edo}}
\begin{proof}
Equation (\ref{eq:ode}) has the form $\text{d}u/\text{dt}=f(t,u)$ with $f(t,u)\rightarrow g(u)$ as $t\rightarrow\infty$. So (\ref{eq:ode}) is called an asymptotically autonomous differential equation (\cite{markus2016ii}, \cite{thieme1994asymptotically}) with the limit equation
\begin{equation}\label{eq:edoasymp}
\frac{\text{d}y(t)}{\text{dt}}= My(t) - \eta \Vert y(t)\Vert_1 y(t).
\end{equation}
We first show that any solution $y(t)$ of (\ref{eq:edoasymp}) started at $y(0)\in\R_+^{*}\times \R_+$ converges to a stationary state. In \cite{ackleh2007comparison}, the proof is given when $M$ is irreducible. We give a slightly different proof. Let $\overline{z}$ be defined in (\ref{eq:statioedo}). It is straightforward to prove that $\overline{z}$ is the eigenvector of $M$ associated with the simple eigenvalue $m_{11}$, which satisfy the condition $\Vert \overline{z}\Vert_1=m_{11}/\eta$. Since $z(0)\in\R_+^{*}\times\R_+$ there exists a positive constant $c(z(0))$ such that $e^{-m_{11}t}e^{Mt}z(0)\rightarrow c(z(0))\overline{z}$ as $t\rightarrow\infty$. We now write
\begin{equation*}
\frac{y_1(t)}{y_{2}(t)}=\frac{\left[e^{-m_{11}t}e^{Mt}y(0)\right]_1}{\left[e^{-m_{11}t}e^{Mt}y(0)\right]_2}\rightarrow\frac{\overline{z}_1}{ \overline{z}_2}.
\end{equation*}
We deduce that the $\omega$-limit set of any solution of (\ref{eq:edoasymp}) is a subset of $\Delta = \lbrace z\in\R_+^2:z_1=\frac{\overline{z}_1}{\overline{z}_2}z_2\rbrace$. We conclude by proving that any solution starting from $\Delta$ converges to $\overline{z}$. Let us consider such a solution (always denoted by $y(t)$). We have 
\begin{align*}
\frac{\text{d}y_1(t)}{\text{dt}}&=y_1(t)(m_{11} - \eta\Vert y(t)\Vert_1),\\
\frac{\text{d}y_2(t)}{\text{dt}}&=y_2(t)\left(m_{21}\frac{y_1(t)}{y_2(t)}+m_{22}- \eta\Vert y(t)\Vert_1\right).
\end{align*}
Since the $\omega$-limit set is an invariant subset, we deduce that 
\begin{equation*}
\frac{\text{d}y_2(t)}{\text{dt}}= y_2(t)(m_{11} - \eta\Vert y(t)\Vert_1),
\end{equation*}
that $\Vert y(t)\Vert_1\rightarrow m_{11}/\eta$  and $y(t)\rightarrow \overline{z}$ as $t\rightarrow \infty$. In order to conclude about the solutions of (\ref{eq:ode}) we use \cite[Theorem 1.2]{thieme1994asymptotically} arguing that $\overline{z}$ is an asymptotically stable equilibrium of (\ref{eq:edoasymp}) and that for any $y(0)\in\R_+^{*}$, the $\omega$-limit set of any solution $y(t)$ of (\ref{eq:ode}) started at $y(0)$ is not a subset of $\lbrace 0\rbrace\times\R_+$. The first claim is easily proved by showing that the Jacobian matrix has negative eigenvalues. For the second claim, let us assume it is not satisfied. Then $y_1(t)\rightarrow 0$ as $t\rightarrow \infty$. Let us show the contradiction. Let $\epsilon$ be sufficiently small and $t_0$ such that for any $t\geq t_0$
\begin{align*}
0<m_{11}-\epsilon &\leq m_{11}+\mathcal{D}_{11}(t)\leq m_{11} +\epsilon\\
m_{22}-\epsilon &\leq m_{22} +\mathcal{D}_{22}(t)\leq m_{22} +\epsilon <0\\
0<m_{12} -\epsilon &\leq m_{12}+\mathcal{D}_{12}(t)\leq m_{12}+\epsilon. 
\end{align*}
We introduce 
\begin{align*}
P^{\epsilon}(y_1,y_2)&=y_1(m_{11}-\epsilon -\eta(y_1+y_2))\\
Q^{\epsilon}(y_1,y_2)&=y_2(m_{22}+\epsilon -\eta(y_1+y_2))+(m_{12}+\epsilon)y_1
\end{align*}
and 
\begin{align*}
\mathcal{A}^{\epsilon}&=\left\lbrace y\in\R_+^2:y_1+y_2 \leq m_{11} -\epsilon\right\rbrace\\
\mathcal{B}^{\epsilon}&=\left\lbrace y\in\R_+^2:y_2\geq \frac{1}{2\eta}\left(m_{22}+\epsilon -\eta y_1 +\sqrt{(m_{22}+\epsilon -\eta y_1)^2 + 4\eta (m_{12} +\epsilon)y_1}\right)\right\rbrace.
\end{align*}
We deduce that there exists $t_1$ such that for any $t\geq t_1$, $\frac{\text{d}y_2(t)}{\text{dt}}<0$ on $\mathcal{B}^{\epsilon}$ and $y_1(t)<\epsilon$. We deduce that there exists $t_2$ such that for all $t\geq t_2$, $y_1(t)+y_2(t)\leq \frac{m_{11}-\epsilon}{\eta}$. So for all $t\geq t_2$, $\frac{\text{d}y_1(t)}{\text{dt}}\geq 0$ which is absurd. 
\end{proof}
\subsection{Differential inclusions}
In this appendix, we recall the results of \cite{gast2012markov} which concern the approximation of Markov chains by differential inclusions.
\\Let $\epsilon>0$ be a scale parameter. Let $(Y^\epsilon(k),k\in\N)$ be a Markov chain with values in $\R^d$. The drift of the Markov chain $Y^{\epsilon}$ is defined by 
\begin{equation*}
g^{\epsilon}(x)=\mathbb{E}\left[Y^{\epsilon}(k+1)-Y^{\epsilon}(k)\vert Y^{\epsilon}(k)=x\right].
\end{equation*}
Let $(\gamma^\epsilon)_{\epsilon>0}$ be such that $\lim_{\epsilon\rightarrow 0}\gamma^\epsilon = 0$ and let us denote
\begin{equation*}
f^{\epsilon}(x)=\frac{g^{\epsilon}(x)}{\gamma^\epsilon}.
\end{equation*}
One can write the evolution of the Markov chain as a stochastic approximation algorithm with constant step size $\gamma^\epsilon$
\begin{equation*}
Y^{\epsilon}(k+1)=Y^{\epsilon}(k)+\gamma^\epsilon\left(f^{\epsilon}(Y^{\epsilon}(k))+U^{\epsilon}(k+1)\right)
\end{equation*}
where $U^{\epsilon}$ is a martingale difference sequence with respect to the filtration associated with the process $Y^{\epsilon}$. 
\\Let us define 
\begin{equation*}
F(x)=\text{conv}\left(\left\lbrace \text{acc}_{\epsilon\rightarrow 0}f^{\epsilon}(x^{\epsilon})\text{ for all }x^{\epsilon}\text{ such that }\lim_{\epsilon\rightarrow 0}x^{\epsilon} =x\right\rbrace\right)    
\end{equation*}
where $\text{conv}(A)$ denotes the convex hull of the set $A$ and $\text{acc}_{\epsilon\rightarrow 0}f^{\epsilon}(x^{\epsilon})$ denotes the set of accumulation points of the sequence $f^{\epsilon}(x^{\epsilon})$ as $\epsilon\rightarrow 0$.
Let us denote by $\mathcal{S}_F(T,x^0)$ the set of solutions $(x(t), t\in\left[0,T\right])$ of the differential inclusion
\begin{equation}\label{eq:diffincannex}
\begin{cases}
\frac{\text{d}x(t)}{d\text{t}}\in F(x(t)),\quad t\in\left[0,T\right]\\
x(0)=x^{0}.
\end{cases}
\end{equation}
Let us recall the definition of  a solution of (\ref{eq:diffincannex}).
\begin{defi}
A map $x:\left[0,T\right]\mapsto \R^d$ is a solution of (\ref{eq:diffincannex}) if there exists a map $\varphi:\left[0,T\right]\mapsto \R^d$ such that:
\begin{itemize}
\item[(i)]For all $t\in\left[0,T\right]$, $\,x(t)=x^{0} +\int_{0}^{t}\varphi(s)ds$,
\item[(ii)]For almost every $t\in\left[0,T\right]$, $\varphi(t)\in F(x(t))$.
\end{itemize}
\end{defi}
In particular (i) is equivalent to saying that $x$ is absolutely continuous. (i) and (ii) imply that $x$ is differentiable at almost every $t\in \left[0,T\right]$ with $\text{d}x(t)/\text{d}t\in F(x(t))$.

\vspace{0.2cm}
We define the continuous process $\overline{Y}^{\epsilon}(t)$ as the piecewise interpolation of $Y^{\epsilon}$ whose time has been accelerated by $1/\gamma^{\epsilon}$: for all $k\in\mathbb{N}$, $\overline{Y}^{\epsilon}(k\gamma^{\epsilon})=Y^{\epsilon}(k)$ and $\overline{Y}^{\epsilon}$ is linear on $\left[k\gamma^{\epsilon}, (k+1)\gamma^{\epsilon}\right]$. We have the following theorem proved in \cite[Theorem 1]{gast2012markov}.
\begin{theo}\label{th:gastgaujal}
Assume that:
\begin{itemize}
\item[•]There exists a constant $c>0$ such that for all $y\in \R^d,\quad \Vert f^{\epsilon}(y)\Vert \leq c(1 +\Vert y\Vert)$.
\item[•]$U^{\epsilon}$ is a martingale difference sequence which is uniformly integrable.
\end{itemize}
If $Y^{\epsilon}(0)$ tends to $x^{0}$ in probability as $\epsilon$ tends to zero, then
\begin{equation*}
\inf_{y\in \mathcal{S}_F(T,x^{0})}\sup_{t\in\left[0,T\right]}\Vert \overline{Y}^{\epsilon}(t) - x(t)\Vert \longrightarrow 0
\end{equation*}
in probability as $\epsilon$ tends to zero.
\end{theo}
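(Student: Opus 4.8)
The statement is the abstract stochastic-approximation limit of \cite{gast2012markov}, and the plan is to run the standard ``stochastic approximation with differential inclusions'' programme (Benaïm--Hofbauer--Sorin) on the finite horizon $[0,T]$. The starting point is the integral form of the affine interpolation: at grid times $t=k\gamma^{\epsilon}$,
\begin{equation*}
\overline{Y}^{\epsilon}(t)=Y^{\epsilon}(0)+\int_{0}^{t}f^{\epsilon}\bigl(\overline{Y}^{\epsilon}(\underline{s})\bigr)\,ds+M^{\epsilon}(t),\qquad M^{\epsilon}(t)=\gamma^{\epsilon}\sum_{j=1}^{\lfloor t/\gamma^{\epsilon}\rfloor}U^{\epsilon}(j),
\end{equation*}
with $\underline{s}=\gamma^{\epsilon}\lfloor s/\gamma^{\epsilon}\rfloor$ and $M^{\epsilon}$ the cumulated martingale noise. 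First, using the linear growth bound $\Vert f^{\epsilon}(y)\Vert\le c(1+\Vert y\Vert)$, a discrete Grönwall estimate for the drift and Doob's maximal inequality for $M^{\epsilon}$, I would bound $\mathbb{E}\bigl[\sup_{t\le T}\Vert\overline{Y}^{\epsilon}(t)\Vert\bigr]$ uniformly in $\epsilon$; next I would show $\sup_{t\le T}\Vert M^{\epsilon}(t)\Vert\to0$ in probability by splitting $U^{\epsilon}$ into a part truncated at level $R$ (whose cumulated quadratic variation is $O(R^{2}\gamma^{\epsilon})$, controlled by Doob's $L^{2}$ inequality) and a tail made uniformly $L^{1}$-small by uniform integrability. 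On the event of uniformly bounded trajectories the drift integrand is bounded, so $\Vert\overline{Y}^{\epsilon}(t)-\overline{Y}^{\epsilon}(s)\Vert\le C\vert t-s\vert+\Vert M^{\epsilon}(t)-M^{\epsilon}(s)\Vert$, which together with the vanishing of $M^{\epsilon}$ gives tightness of $(\overline{Y}^{\epsilon})_{\epsilon}$ in $C([0,T],\R^{d})$ by Arzelà--Ascoli.

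The heart of the argument is the identification of subsequential limits. Let $x(\cdot)$ be an a.s.\ limit of $\overline{Y}^{\epsilon}$ along a subsequence; the noise drops out, so $x(t)=x^{0}+\int_{0}^{t}\varphi(s)\,ds$, where $\varphi$ is a weak-$L^{1}$ limit of the integrands $f^{\epsilon}(\overline{Y}^{\epsilon}(\underline{s}))$ (these are uniformly integrable by the a priori bound). It remains to prove $\varphi(t)\in F(x(t))$ for almost every $t$. At a Lebesgue point $t$ of $\varphi$ one has $\varphi(t)=\lim_{h\to0}\frac1h\int_{t}^{t+h}\varphi$, and $\frac1h\int_{t}^{t+h}\varphi$ is a weak limit of the averaged drifts $\frac1h\int_{t}^{t+h}f^{\epsilon}(\overline{Y}^{\epsilon}(\underline{s}))\,ds$. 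Since $\overline{Y}^{\epsilon}(\underline{s})\to x(s)$ uniformly and $x$ is continuous, for $s\in[t,t+h]$ the arguments cluster near $x(t)$; hence, by the very definition of $F$ as the closed convex hull of the accumulation set $\{\mathrm{acc}_{\epsilon\to0}f^{\epsilon}(x^{\epsilon}):x^{\epsilon}\to x(t)\}$, these averages lie within an arbitrarily small neighbourhood of the closed convex set $F(x(t))$. Letting $\epsilon\to0$ then $h\to0$, and using the upper semicontinuity of $F$ (automatic from its construction, with compact convex values by linear growth), forces $\varphi(t)\in F(x(t))$, i.e.\ $x\in\mathcal{S}_{F}(T,x^{0})$.

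To pass from this to the probabilistic statement I would argue by contradiction. Because $F$ is upper semicontinuous with nonempty compact convex values and linear growth, $\mathcal{S}_{F}(T,x^{0})$ is nonempty and compact in $C([0,T])$ by the classical existence and closedness theory for such differential inclusions, so $w\mapsto\mathrm{dist}(w,\mathcal{S}_{F})$ is continuous. If the conclusion failed there would be $\delta,\eta>0$ and $\epsilon_{n}\to0$ with $\mathbb{P}\bigl(\mathrm{dist}(\overline{Y}^{\epsilon_{n}},\mathcal{S}_{F})>\delta\bigr)\ge\eta$. By tightness and Prokhorov a sub-subsequence converges in law, and by the previous two steps (via Skorokhod coupling) its limit lies in $\mathcal{S}_{F}$ almost surely; the Portmanteau theorem applied to the closed set $\{w:\mathrm{dist}(w,\mathcal{S}_{F})\ge\delta\}$ then gives $\limsup_{n}\mathbb{P}\bigl(\mathrm{dist}(\overline{Y}^{\epsilon_{n}},\mathcal{S}_{F})\ge\delta\bigr)\le0$, contradicting $\eta>0$.

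The main obstacle is the identification step: since $f^{\epsilon}$ is discontinuous and admits no pointwise limit, the velocity of the limiting path can only be pinned down set-valuedly. Making the time-averaged drift land exactly in $F(x(t))$ rests delicately on the convexification produced by averaging (together with uniform integrability, to legitimise the weak-$L^{1}$ passage to the limit) and on the upper semicontinuity that is built into the definition of $F$ as a closed convex hull of accumulation points.
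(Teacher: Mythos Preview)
The paper does not actually prove this theorem: it is stated in the appendix purely as a recall of \cite[Theorem~1]{gast2012markov}, with the explicit attribution ``We have the following theorem proved in \cite[Theorem 1]{gast2012markov}'', and no argument is given. So there is no paper-side proof to compare against.

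That said, your sketch is a faithful outline of the Bena\"im--Hofbauer--Sorin programme that underlies the Gast--Gaujal result: vanishing of the martingale remainder via truncation plus uniform integrability, tightness from linear growth and equicontinuity, and identification of subsequential limits through weak-$L^{1}$ compactness of the drift together with the upper-semicontinuous, convex-compact-valued structure built into $F$. One point to be careful about is the a~priori bound $\sup_{\epsilon}\mathbb{E}\bigl[\sup_{t\le T}\Vert\overline{Y}^{\epsilon}(t)\Vert\bigr]<\infty$: under mere uniform integrability of $U^{\epsilon}$ (no second-moment assumption) you cannot invoke Doob's $L^{2}$ inequality globally, so the Gr\"onwall argument for the trajectory bound has to be run after the truncation/tail split rather than before. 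With that adjustment the outline is sound.
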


\subsection{Numerical simulation}
We give the Python script for the numerical simulations of the individual based model described in Section 2. The algorithm is based on a classical acceptation/reject method (\cite{champagnat2006unifying}, \cite{tran2006modeles})

\vspace{0.2cm}
\scriptsize{
\begin{lstlisting}

#parameters

#intensity of competition
competition = 0.0005  

#probability of mutation
p_mut = 0.05                    

#variance of mutations
var_mut = 0.05					

#number of jumps
number_of_jumps = 1000000           

#initial population size
population_size = 10000		


#traits in the initial population

trait = numpy.ones((population_size, 2))
for k in range(len(trait[:, 0])):
    trait[k, :] = [1.2, 1.6]          


#initial matrix population: [x_b,x_d, living/dead, birth date, death date, id, id parent, parent senescent]


population_0 = numpy.zeros((len(trait[:, 0]), 8), order='C', dtype=numpy.float32)
population_0[:, 0:2] = trait         
population_0[:, 2] = 1                      
population_0[:, 3: 5] = 0  
for l in range(len(population_0[:, 0])):
    population_0[l, 5] = l+1             


#birth rates

def b(x, a):
    if a <= x[0]:
        u = 1       
    else:
        u = 0.0
    return u

#death rates

def d(x, a):
    if a > x[1]:
        u = 1         
    else:
        u = 0.0
    return u


# maximal jump intensity by individual  (for the acceptation/reject method)       

intmax = 2.0 + competition 



#Lansing effect

def lansing_effect(x, u):
    if x[1] > 0:                                                               
        if u < x[1]:                                                           
            r = x                                                              
        else:                                                                  
            r = [x[0], 0]
    else:                                                                      
        r = x
    return r


#mutation kernel

def dm(x):
    g = random.gauss(0, var_mut)
    while x + g < 0:
        g = random.gauss(0, var_mut)
    return x + g


#functions for acceptation/reject method

def acceptation_rejet_clone(t, a, n):
    return b(t, a)*(1-(p_mut*p_mut+2*p_mut*(1-p_mut)))/(intmax*n)

def acceptation_rejet_mutant_1(t, a, n):
    return acceptation_rejet_clone(t, a, n)+(b(t, a)*2*p_mut*(1-p_mut))/(intmax*n)

def acceptation_rejet_mutant_2(t, a, n):
    return acceptation_rejet_mutant_1(t, a, n) + (b(t, a)*p_mut*p_mut)/(intmax*n)

def acceptation_rejet_mort(t, a, n):
    return acceptation_rejet_mutant_2(t, a, n) + (d(t, a) + (n-1)*intensite_competition)/(intmax*n)



#transition of the process: it is based on a classical acceptation/reject method

def transition(p, time):

#initialisation for acceptation/reject method
	
#living individuals	
	viv = p[(p[:, 2] == 1), :]     
    
#population size    
    n = len(viv[:, 0])                                                   
    
#jump time    
    jump_time = random.expovariate(1)/(intmax*n*n)        
    
#uniform law on (0,1)    
    u = random.uniform(0, 1)    
    
#random sampling of one individual   
    ind = random.randint(0, n-1)   
    
    
    w = time - viv[ind, 3] + jump_time #age of this individual
	
#acceptation/reject method        
    while u > acceptation_rejet_mort(viv[ind, :2], w, n):          
        jump_time += (random.expovariate(1)/(intmax*n*n))                
        u = random.uniform(0, 1)                                             
        ind = random.randint(0, n-1)                                         
        w = time - viv[ind, 3] + jump_time
	
#when accepted
    s = viv[ind, :]                                                          
    a1 = acceptation_rejet_clone(s[:2], w, n)                                
    a2 = acceptation_rejet_mutant_1(s[:2], w, n)                             
    a3 = acceptation_rejet_mutant_2(s[:2], w, n)                             

    a = 0                                                                    

    if w > s[1]:                                                             
        a = 1
	
#Lansing effect
    s[:2] = lansing_effect(s[:2], w)

#if clonal birth
    if u <= a1:                                                                                                                                             
        c1 = [1, time + jump_time, 0.0, p[-1, 5]+1, s[5], a]                                                                                      
        p = numpy.vstack((p, numpy.append(numpy.array(s[:2]), numpy.array(c1))))                                                      
	
#if birth with mutation (1 trait)    
    elif a1 < u <= a2:                                                                                                                                      
        x1 = int(numpy.random.randint(1, 3, 1))                                                                                                                  
        z1 = [[dm(s[0]), s[1]], [s[0], dm(s[1])]]                                                       
        c2 = [1, time + jump_time, 0.0, p[-1, 5]+1, s[5], a]                    
        p = numpy.vstack((p, numpy.append(numpy.array(z1[x1-1]), numpy.array(c2))))                                                        
    
#if birth with mutation (2 traits)     
    elif a2 < u <= a3:                                                                                               
        c3 = [1, time + jump_time, 0.0, p[-1, 5]+1, s[5], a]
        p = numpy.vstack((p, numpy.append(numpy.array([dm(s[0]), dm(s[1])]), numpy.array(c3))))
    
#if death    
    else:                                                                                              
        p[(p[:, 5] == s[5]), 2] = 0                                                                      
        p[(p[:, 5] == s[5]), 4] = time + jump_time            

    time += jump_time

    return p, time


#for simulating one trajectory of the process of number_of_jumps jumps

def trajectoire(p):
    population_size = sum(p[:, 2])						time = 0                                       
    for cs in range(0, number_of_jumps):
        if population_size >= 1:
            x = transition(p, time)                
            p = x[0]                                
            time = x[1]                            
            population_size = sum(p[:, 2])
            print cs
        else:
            break
    p[(p[:, 4] == 0), 4] = time                   
    return p, time                               



\end{lstlisting}}

\paragraph{acknowledgements.} We acknowledge partial support by the Chaire Modélisation Mathématique et Biodiversité of Veolia Environment - \'Ecole Polytechnique - Museum National d'Histoire Naturelle - FX. We acknowledge partial support by CNRS and the ATIP/Avenir-Aviesan kick-starting group leaders program. This work was also supported by a public grant as part of the investissement d'avenir project, reference ANR-11-LABX-0056-LMH, LabEx LMH. Finally, we acknowledge the reviewers for their interesting and constructive suggestions.

\bibliographystyle{plain} 
\bibliography{bibli} 

\begin{thebibliography}{10}

\bibitem{ackleh2007comparison}
A~Ackleh and Shuhua Hu.
\newblock Comparison between stochastic and deterministic selection-mutation
  models.
\newblock {\em Mathematical Biosciences and Engineering}, 4(2):133, 2007.

\bibitem{arslan2017older}
Ruben~C Arslan, Kai~P Willf{\"u}hr, Emma~M Frans, Karin~JH Verweij,
  Paul-Christian B{\"u}rkner, Mikko Myrskyl{\"a}, Eckart Voland, Catarina
  Almqvist, Brendan~P Zietsch, and Lars Penke.
\newblock Older fathers' children have lower evolutionary fitness across four
  centuries and in four populations.
\newblock {\em Proc. R. Soc. B}, 284(1862):20171562, 2017.

\bibitem{caswell2010reproductive}
Hal Caswell.
\newblock Reproductive value, the stable stage distribution, and the
  sensitivity of the population growth rate to changes in vital rates.
\newblock {\em Demographic Research}, 23:531--548, 2010.

\bibitem{champagnat2002canonical}
N~Champagnat, R~Ferrière, and G~Ben~Arous.
\newblock The canonical equation of adaptive dynamics: a mathematical view.
\newblock {\em Selection}, 2(1-2):73--83, 2002.

\bibitem{champagnat2006microscopic}
Nicolas Champagnat.
\newblock A microscopic interpretation for adaptive dynamics trait substitution
  sequence models.
\newblock {\em Stochastic processes and their applications}, 116(8):1127--1160,
  2006.

\bibitem{champagnat2006unifying}
Nicolas Champagnat, R{\'e}gis Ferri{\`e}re, and Sylvie M{\'e}l{\'e}ard.
\newblock Unifying evolutionary dynamics: from individual stochastic processes
  to macroscopic models.
\newblock {\em Theoretical population biology}, 69(3):297--321, 2006.

\bibitem{champagnat2011polymorphic}
Nicolas Champagnat and Sylvie M{\'e}l{\'e}ard.
\newblock Polymorphic evolution sequence and evolutionary branching.
\newblock {\em Probability Theory and Related Fields}, 151(1-2):45--94, 2011.

\bibitem{clement2017analysis}
Fr{\'e}d{\'e}rique Cl{\'e}ment, Fr{\'e}d{\'e}rique Robin, and Romain Yvinec.
\newblock Analysis and calibration of a linear model for structured cell
  populations with unidirectional motion: Application to the morphogenesis of
  ovarian follicles.
\newblock {\em arXiv preprint arXiv:1712.05372}, 2017.

\bibitem{dambroise2016two}
Emilie Dambroise, Lea Monnier, Lu~Ruisheng, Hugo Aguilaniu, J-S Joly, Herve
  Tricoire, and Michael Rera.
\newblock Two phases of aging separated by the smurf transition as a public
  path to death.
\newblock {\em Scientific reports}, 6:23523, 2016.

\bibitem{bhl124544}
Charles Darwin.
\newblock {\em The origin of species by means of natural selection, or, the
  preservation of favoured races in the struggle for life}.
\newblock London, John Murray, 1871.

\bibitem{dieckmann1996dynamical}
Ulf Dieckmann and Richard Law.
\newblock The dynamical theory of coevolution: a derivation from stochastic
  ecological processes.
\newblock {\em Journal of mathematical biology}, 34(5-6):579--612, 1996.

\bibitem{ethier2009markov}
Stewart~N Ethier and Thomas~G Kurtz.
\newblock {\em Markov processes: characterization and convergence}, volume 282.
\newblock John Wiley \& Sons, 2009.

\bibitem{fabian2011evolution}
Daniel Fabian and Thomas Flatt.
\newblock The evolution of aging.
\newblock {\em Nature Education Knowledge}, 3(9), 2011.

\bibitem{gast2012markov}
Nicolas Gast and Bruno Gaujal.
\newblock Markov chains with discontinuous drifts have differential inclusion
  limits.
\newblock {\em Performance Evaluation}, 69(12):623--642, 2012.

\bibitem{gurtin1974non}
Morton~E Gurtin and Richard~C MacCamy.
\newblock Non-linear age-dependent population dynamics.
\newblock {\em Archive for Rational Mechanics and Analysis}, 54(3):281--300,
  1974.

\bibitem{haldane1942new}
John Burdon~Sanderson Haldane.
\newblock {\em New paths in genetics}.
\newblock George allen \& Unwin, 1942.

\bibitem{hamilton1966moulding}
William~D Hamilton.
\newblock The moulding of senescence by natural selection.
\newblock {\em Journal of theoretical biology}, 12(1):12--45, 1966.

\bibitem{jones2014diversity}
Owen~R Jones, Alexander Scheuerlein, Roberto Salguero-G{\'o}mez, Carlo~Giovanni
  Camarda, Ralf Schaible, Brenda~B Casper, Johan~P Dahlgren, Johan Ehrl{\'e}n,
  Mar{\'\i}a~B Garc{\'\i}a, Eric~S Menges, et~al.
\newblock Diversity of ageing across the tree of life.
\newblock {\em Nature}, 505(7482):169, 2014.

\bibitem{kirkwood2000we}
Thomas~BL Kirkwood and Steven~N Austad.
\newblock Why do we age?
\newblock {\em Nature}, 408(6809):233, 2000.

\bibitem{kirkwood1982cytogerontology}
Thomas~BL Kirkwood and Thomas Cremer.
\newblock Cytogerontology since 1881: a reappraisal of august weismann and a
  review of modern progress.
\newblock {\em Human genetics}, 60(2):101--121, 1982.

\bibitem{klironomos2013epigenetic}
Filippos~D Klironomos, Johannes Berg, and Sin{\'e}ad Collins.
\newblock How epigenetic mutations can affect genetic evolution: model and
  mechanism.
\newblock {\em Bioessays}, 35(6):571--578, 2013.

\bibitem{lansing1947transmissible}
Albert~I Lansing.
\newblock A transmissible, cumulative, and reversible factor in aging.
\newblock {\em Journal of Gerontology}, 2(3):228--239, 1947.

\bibitem{lansing1954nongenic}
Albert~I Lansing.
\newblock A nongenic factor in the longevity of rotifers.
\newblock {\em Annals of the New York Academy of Sciences}, 57(1):455--464,
  1954.

\bibitem{markus2016ii}
L.~Markus.
\newblock Asymptotically autonomous differential systems.
\newblock In {\em Contributions to the theory of nonlinear oscillations, vol.
  3}, Annals of Mathematics Studies, no. 36, pages 17--29. Princeton University
  Press, Princeton, N. J., 1956.

\bibitem{medawar1952unsolved}
Peter~Brian Medawar.
\newblock {\em An unsolved problem of biology}.
\newblock College, 1952.

\bibitem{meleard2009trait}
Sylvie M{\'e}l{\'e}ard and Viet~Chi Tran.
\newblock Trait substitution sequence process and canonical equation for
  age-structured populations.
\newblock {\em Journal of mathematical biology}, 58(6):881--921, 2009.

\bibitem{metz1996adaptive}
J.A.J. Metz, S.A.H. Geritz, G.~Meszéna, F.A.J. Jacobs, and J.S. van
  Heerwaarden.
\newblock Adaptive dynamics: a geometrical study of the consequences of nearly
  faithful reproduction.
\newblock {\em Stochastic and Spatial Structures of Dynamical Systems. Elsevier
  Science, Burlington, MA}, pages 183--231, 1996.

\bibitem{metz1992should}
Johan~AJ Metz, Roger~M Nisbet, and Stefan~AH Geritz.
\newblock How should we define ‘fitness’ for general ecological scenarios?
\newblock {\em Trends in Ecology \& Evolution}, 7(6):198--202, 1992.

\bibitem{noguera2018experimental}
Jos{\'e}~C Noguera, Neil~B Metcalfe, and Pat Monaghan.
\newblock Experimental demonstration that offspring fathered by old males have
  shorter telomeres and reduced lifespans.
\newblock {\em Proc. R. Soc. B}, 285(1874):20180268, 2018.

\bibitem{perthame2006transport}
Beno{\^\i}t Perthame.
\newblock {\em Transport equations in biology}.
\newblock Springer Science \& Business Media, 2006.

\bibitem{priest2002role}
Nicholas~K Priest, Benjamin Mackowiak, and Daniel~EL Promislow.
\newblock The role of parental age effects on the evolution of aging.
\newblock {\em Evolution}, 56(5):927--935, 2002.

\bibitem{pruss1981equilibrium}
Jan Pr{\"u}{\ss}.
\newblock Equilibrium solutions of age-specific population dynamics of several
  species.
\newblock {\em Journal of Mathematical Biology}, 11(1):65--84, 1981.

\bibitem{rera2011modulation}
Michael Rera, Sepehr Bahadorani, Jaehyoung Cho, Christopher~L Koehler, Matthew
  Ulgherait, Jae~H Hur, William~S Ansari, Thomas Lo~Jr, D~Leanne Jones, and
  David~W Walker.
\newblock Modulation of longevity and tissue homeostasis by the drosophila
  pgc-1 homolog.
\newblock {\em Cell metabolism}, 14(5):623--634, 2011.

\bibitem{rera2012intestinal}
Michael Rera, Rebecca~I Clark, and David~W Walker.
\newblock Intestinal barrier dysfunction links metabolic and inflammatory
  markers of aging to death in drosophila.
\newblock {\em Proceedings of the National Academy of Sciences},
  109(52):21528--21533, 2012.

\bibitem{rera2018smurf}
Michael Rera, C{\'e}line Vallot, and Christel Lefran{\c{c}}ois.
\newblock The smurf transition: new insights on ageing from end-of-life studies
  in animal models.
\newblock {\em Current opinion in oncology}, 30(1):38--44, 2018.

\bibitem{schmitz2011transgenerational}
Robert~J Schmitz, Matthew~D Schultz, Mathew~G Lewsey, Ronan~C O’Malley,
  Mark~A Urich, Ondrej Libiger, Nicholas~J Schork, and Joseph~R Ecker.
\newblock Transgenerational epigenetic instability is a source of novel
  methylation variants.
\newblock {\em Science}, 334(6054):369--373, 2011.

\bibitem{thieme1994asymptotically}
Horst~R Thieme.
\newblock Asymptotically autonomous differential equations in the plane.
\newblock {\em The Rocky Mountain Journal of Mathematics}, pages 351--380,
  1994.

\bibitem{tran2006modeles}
Viet~Chi Tran.
\newblock {\em Mod{\`e}les particulaires stochastiques pour des probl{\`e}mes
  d'{\'e}volution adaptative et pour l'approximation de solutions
  statistiques}.
\newblock PhD thesis, Universit{\'e} de Nanterre-Paris X, 2006.

\bibitem{tran2008large}
Viet~Chi Tran.
\newblock Large population limit and time behaviour of a stochastic particle
  model describing an age-structured population.
\newblock {\em ESAIM: Probability and Statistics}, 12:345--386, 2008.

\bibitem{tricoire2015new}
Herv{\'e} Tricoire and Michael Rera.
\newblock A new, discontinuous 2 phases of aging model: lessons from drosophila
  melanogaster.
\newblock {\em PloS one}, 10(11):e0141920, 2015.

\bibitem{tully2011evolution}
Thomas Tully and Amaury Lambert.
\newblock The evolution of postreproductive life span as an insurance against
  indeterminacy.
\newblock {\em Evolution}, 65(10):3013--3020, 2011.

\bibitem{webb1985theory}
Glenn~F Webb.
\newblock {\em Theory of nonlinear age-dependent population dynamics}.
\newblock CRC Press, 1985.

\bibitem{weismann1881origin}
A~Weismann.
\newblock The origin of the markings of caterpillars.
\newblock {\em Studies in the theory of descent/Tr. and ed. R. Meldola. London:
  Sampson Low. Marston. Searle Rivington}, pages 161--389, 1881.

\bibitem{bhl23551}
August Weismann, Edward Poulton, Sir Bagnall, S.~Schönland, A.~E Shipley, and
  Sir Arthur~Everett.
\newblock {\em Essays upon heredity and kindred biological problems}, volume
  v.1.
\newblock Oxford,Clarendon Press, 1891.

\end{thebibliography}

\end{document}